\let\vec\undefined
\DeclareMathOperator{\vec}{vec}
\newcommand{\indep}{\perp\!\!\!\perp}
\newcommand{\Mcal}{\mathcal{M}}
\newcommand{\J}{\mathcal{J}}
\newcommand{\R}{\mathbb{R}}
\newcommand{\pa}{\mathrm{pa}}
\newcommand{\omegatwo}{\omega^{(2)}}
\newcommand{\omegathree}{\omega^{(3)}}
\newtheorem{theorem}{Theorem}[section]
\newtheorem{definition}[theorem]{Definition}
\newtheorem{proposition}[theorem]{Proposition}
\newtheorem{lemma}[theorem]{Lemma}
\newtheorem{corollary}[theorem]{Corollary}
\theoremstyle{definition}
\newtheorem{remark}[theorem]{Remark}
\theoremstyle{definition}
\newtheorem{example}[theorem]{Example}
\title{Identifiability in Graphical Discrete Lyapunov Models}
\author{
Cecilie Olesen Recke$^*$ \\
{\em University of Copenhagen} \\
\hphantom{xxxxxxxx}\texttt{cor@math.ku.dk}\hphantom{xxxxxxxx}\\ 
\and
Sarah Lumpp$^*$ \\
{\em Technical University of Munich} \\ 
\texttt{sarah.lumpp@tum.de}\\
\and
Nataliia Kushnerchuk$^*$ \\
{\em Aalto University} \\
\texttt{nataliia.kushnerchuk@aalto.fi}\\
\and
Janike Oldekop \\
{\em Technical University of Berlin} \\
\texttt{oldekop@math.tu-berlin.de}\\
\and
Jiayi Li \\
{\em Max Planck Institute for Molecular}\\ {\em Cell Biology and Genetics}\\
\texttt{jli@mpi-cbg.de} \\
\and
Jane Ivy Coons$^{\diamond}$ \\
{\em Worcester Polytechnic Institute}\\
\texttt{jcoons@wpi.edu} \\
\and
Elina Robeva$^{\diamond}$ \\
{\em University of British Columbia} \\
\texttt{erobeva@math.ubc.ca}
}
\date{}
\begin{document}

\maketitle
\def\thefootnote{*}\footnotetext{Indicates co-first authors.}
\def\thefootnote{$\diamond$}\footnotetext{Indicates corresponding authors.}
\begin{abstract}

In this paper, we study discrete Lyapunov models, which consist of steady-state distributions of first-order vector autoregressive models.
The parameter matrix of such a model encodes a directed graph whose vertices correspond to the components of the random vector. 
This combinatorial framework naturally allows for cycles in the graph structure. We focus on the fundamental problem of identifying the entries of the parameter matrix. In contrast to the classical setting, we assume non-Gaussian error terms, which allows us to use the higher-order cumulants of the model. In this setup, we show generic identifiability for  directed acyclic graphs with self-loops at each vertex and show how to express the parameters as a rational function of the cumulants. 
Furthermore, we establish local identifiability for {\em all}  directed graphs  containing self-loops at each vertex and no isolated vertices. Finally, we provide first results on the defining equations of the models, showing model equivalence for certain graphs and paving the way towards structure learning.
\bigskip
\end{abstract}

\section{Introduction}

Vector autoregressive models are widely used in control theory, the health sciences \cite{van2017temporal}, and econometrics \cite{sims1980macroeconomics}. Furthermore, they are applied to model gene regulatory networks in computational biology  \cite{Michailidis2013introduction,Rajapakse2011introduction}. In particular, the first-order vector autoregressive model, or VAR(1) model, is a common modeling choice. Its dynamics are given by
\begin{equation}
\label{eq::VAR(1)process}
    \begin{split}
    X_1 &=  \varepsilon_1, \\
    X_t &= A X_{t-1} + \varepsilon_t \quad \text{for } t > 1,
    \end{split}
\end{equation}
where $X_t$ is a $p$-dimensional random vector, $A \in \mathbb{R}^{p \times p}$ is a parameter matrix encoding the interactions among the components of $X_t$, and $\varepsilon_t$ is a $p$-dimensional random vector modeling independent errors with mean zero and diagonal covariance matrix $D$. 
Under certain conditions on the eigenvalues of $A$, the system \eqref{eq::VAR(1)process} has a unique {\em stationary solution}. Our primary object of interest is the steady-state distribution of the stationary solution. This is also called the equilibrium, limiting or simply stationary distribution in the literature. 
The covariance matrix $\Sigma$ of this steady-state distribution is parametrized by the \emph{discrete Lyapunov equation}
\begin{equation}
    \label{eq:DiscreteLyapunovEqSecondOrder}
    A \Sigma A^T + D = \Sigma.
\end{equation}
If the errors are Gaussian, then the steady-state distribution is also Gaussian and equation~\eqref{eq:DiscreteLyapunovEqSecondOrder} completely characterizes this distribution by its covariance.
In the case of non-Gaussian errors, which we study here, the steady-state distribution can further be characterized by its {\em higher-order cumulants} which satisfy higher-order analogues of equation~\eqref{eq:DiscreteLyapunovEqSecondOrder}. These equations are described in \cref{sec:higher-order}.

The VAR(1) model allows a natural graphical representation. Let $G = (V, E)$ be a directed graph whose vertices $V$ correspond to the $p$ coordinates of the random process $X_t$, and the edges $E \subseteq V \times V$ represent the sparsity pattern of the parameter matrix $A$. In other words, $a_{ji} = 0$ if $i \to j \notin E$. 
Note that this definition allows for self-loops at each node. To simplify indexing later on, we usually number the vertices as $V=\{0\}\cup [p-1]$.
Parallel to the continuous setting~\cite{Recke_Hansen},
we refer to the set of centered distributions satisfying the discrete Lyapunov equations up to order $n$ as the \emph{$n$th order discrete Lyapunov model of the directed graph $G$} (see \cref{def::model}).

Unlike structural causal models in which the random vector $X$ does not evolve with time~\cite{peters2017}, VAR(1) models naturally allow for \emph{cycles}, reflecting feedback loops in many real-world processes (e.g.,  stability of control systems \cite{passino2002lyapunov}, gene regulatory networks \cite{karlebach2008modelling, young2019identifying}, and macro-economic dynamics \cite{kozlov2009investigation}). In particular, it is common to allow the existence of self-loops where some of the variables depend on their own past, capturing the temporal and cyclic nature of the model. An interpretation of cycles in linear structural equation models is discussed in~\cite{JMLR:v13:hyttinen12a}, where it corresponds to the error in \eqref{eq::VAR(1)process} not depending on time. The authors of~\cite{JMLR:v13:hyttinen12a} discuss the limits of this deterministic view point and propose the set-up in the current paper as a possible extension. 

Since our primary interest lies in the steady-state distribution of the time series, each of our samples corresponds to a single snapshot of cross-sectional data once the process has stabilized. This perspective is critical in fields such as computational biology, where measuring the same system at multiple time points may be infeasible due to the destructive nature of RNA-sequencing technologies~\cite{young2019identifying}. Even though only one time point is observed per sample, the underlying dynamic structure is preserved by virtue of the model's stationarity. Furthermore, by keeping the interpretation of the data as arising from the steady state, there is a natural interpretation of cycles. 

Variations of this approach were recently established in \cite{young2019identifying} and \cite{varando2020graphical}. While \cite{young2019identifying} focuses on the Gaussian stationary VAR(1) model, \cite{varando2020graphical} proposes the equivalent in a continuous setting -- the graphical continuous Lyapunov model. This model parametrizes stationary distributions of diffusion processes, such as the Ornstein-Uhlenbeck process, via the continuous Lyapunov equation. 
Recent results include advances in parameter identifiability in the Gaussian and the non-Gaussian cases \cite{dettling2023identifiability,Recke_Hansen}, 
as well as new findings on structure identifiability and model equivalence~\cite{amendola2025structural}. Further work has addressed statistical estimation and structure learning \cite{varando2020graphical,dettling2024lasso}. 
In more general diffusion models with potentially nonlinear drift, recent contributions characterize the model's conditional independence structure \cite{boege2024conditional} and propose intervention-based learning procedures~\cite{lorch2024causal}.

While continuous Lyapunov models continue to be an active area of research, analogous results for discrete-time models are limited. 
In the case of continuous Lyapunov models with unknown errors, the strongest possible identifiability result can only ever be up to a scaling of the parameter matrix \cite{dettling2023identifiability}. However, as we will show, this is not the case for discrete Lyapunov models. 
To reliably {\em estimate the graph}~$G$ from data in the discrete setting, it is crucial to first provide theoretical guarantees for {\em identifiability} of the model parameters if the graph is known. This is precisely the question we address in the present work. To do so, we use tools from algebraic geometry, graph theory, and algebraic statistics. 

In the case of Gaussian error terms, the resulting stationary distribution is Gaussian as well, implying that all cumulants of order higher than two are zero. For general graphs, however, the  parameters are not identifiable
from only the first and second-order moments \cite{comon2010handbook} -- for instance, flipping the sign of the parameter matrix $A$ already results in the same covariance matrix.
Prior work by \cite{young2019identifying} mitigates these issues by assuming that the error covariance $D$ is known as well as imposing a particular sparsity pattern on $A$ and assuming the signs of all the diagonal elements of $A$ are known. In this specific setting, they are able to show identifiability of the parameters in $A$ from the covariance matrix as well as consistency and asymptotic efficiency of the maximum likelihood estimator of $A$. 
Recent work by \cite{liu2025identifiability} presents first results on structure identifiability from the covariance matrix. In particular, they consider model distinguishability based on the Jacobian matroid of the parametrization map. They provide sufficient conditions for generic structure distinguishability based on model dimension and graphical structures in special cases. 

In this work, in contrast, we consider the non-Gaussian setting which allows us to leverage higher-order cumulants as well. We show in \cref{thm:LocIdAllGraphs} and following corollaries that the entries of the parameter matrix $A$ together with the cumulants of the error terms are locally identifiable through an algebraic characterization of the second-, third-, and fourth-order cumulants for all graphs with all self-loops without an isolated node.
We also show in \cref{prop:IdAllSelfLoops} that when the sparsity pattern of $A$ specifies a DAG where all self-loops are present, the model parameters are generically (rationally) identifiable from second-, third- and fourth-order cumulants.
Furthermore, we investigate model equivalence and the polynomial equations satisfied by the cumulants for certain families of graphs.

The remainder of the paper is organized as follows. In \cref{sec::Preliminaries}, we derive the discrete Lyapunov equations for higher-order cumulants and use them to define higher-order discrete Lyapunov models. In \cref{sec::TrekParametrization}, we show how to parametrize the model using a trek rule and how this trek rule can be restricted to base treks without self-loops in a specific setting. \cref{sec::ParameterIdentifiabilityDAGs} addresses the question of parameter identifiability when the underlying graph is a DAG (with self-loops). In this case, we prove that the matrix $A$ and the cumulants of the error are generically identifiable from the second-, third-, and fourth-order cumulants of the stationary distribution of $X_t$. \cref{sec:local identifiability} extends these results by proving local identifiability for arbitrary directed graphs that may contain cycles while considering the Jacobian of only the second- and third-order cumulants. In \cref{sec::Equations}, we provide initial results on model equivalence as well as equations in the ideal of the model.  

\section{Preliminaries} \label{sec::Preliminaries}
In this section, we provide preliminary results and definitions for discrete Lyapunov models. In \cref{sec:higher-order}, we first derive the tensor equations for the cumulants of the steady state of a VAR(1) process; these equations are referred to as the \emph{discrete Lyapunov equations}. In \cref{sec:model-definition}, we define the $n$th-order discrete Lyapunov model for a given directed graph $G$. Finally, in \cref{sec:Identifiability-definition}, we introduce different types of identifiability and provide initial remarks on the types of identifiability that can be feasibly established for discrete Lyapunov models. 

\subsection{Higher-Order Cumulants and the Discrete Lyapunov Equations}\label{sec:higher-order}
A steady-state distribution of a VAR(1) process as described in \eqref{eq::VAR(1)process} exists in the general setting if $A$ is a Schur stable matrix, which is a matrix where all eigenvalues have absolute values strictly less than~1, and that the $\varepsilon_t$ are independent with mean 0 and have the same covariance. Under these condition, Example~8.4.1 of Brockwell and Davis \cite{brockwell2016introduction} (combined with Example~2.2.1) shows that there is a unique stationary solution to \eqref{eq::VAR(1)process}, given by
\begin{equation}
\label{eq::stationary-solution}
    X_t \;=\; \sum_{j=0}^{\infty} A^j \, \varepsilon_{t-j}.
\end{equation}
Intuitively, requiring $|\lambda_i(A)| < 1$ ensures that the powers $A^j$ decay as $j \to \infty$, making the above series converge absolutely. Hence, for large $t$, the distribution of $X_t$ stabilizes to a unique steady-state distribution, also sometimes called its limiting, equilibrium or stationary distribution.

We aim to characterize steady-state distributions of VAR(1) processes by their higher-order cumulants. Therefore, we will assume that the errors, $\varepsilon_t$, are independent and identically distributed so that all their higher-order cumulants are identical. 
We derive equations that describe the cumulants of the steady state based on the parameter matrix $A$ and the corresponding cumulant tensors of the errors $\varepsilon_t$. Denote by $\Omega^{(n)} = \text{cum}_n(\varepsilon_t)$ the $n$th-order cumulant tensor of the noise. Note that $\Omega^{(2)} = D$ as introduced in \eqref{eq::VAR(1)process}. 
As shown in \cite{young2019identifying}, the second-order cumulant of a stationary process $X_t$, i.e., $\mathrm{Var}(X_t)$, is described by the discrete Lyapunov equation in \eqref{eq:DiscreteLyapunovEqSecondOrder} or see remark after \cref{prop:nthorder}. We show that the higher-order cumulants of the steady-state distribution of $X_t$ follow higher-order tensor equivalents of the discrete Lyapunov equation.

In the following, we will denote the second-, third-, and fourth-order cumulants by $S = T_2$, $T = T_3$, and $R = T_4$, respectively. A notion of taking the product between a tensor and a matrix is needed to write the higher-order discrete Lyapunov equations, this is called the $k$-mode product. The definition of the $k$-mode product of a tensor $T \in \mathbb{R}^{I_1 \times I_2 \times \cdots \times I_N}$ with a matrix $A \in \mathbb{R}^{J \times I_k}$, denoted $T \times_k A$, 
is a $I_1 \times \cdots \times I_{k-1} \times J \times I_{k+1} \times \cdots \times I_N$ tensor with element-wise entries 
\begin{equation*}
    (T \times_n A)_{i_1 \cdots i_{k-1} j i_{k+1} \cdots i_N} = \sum_{i_k = 1}^{I_k} T_{i_1 \cdots i_{k-1} i_k i_{k+1} \cdots i_{N}} A_{ji_k}.
\end{equation*}

When performing the multiplication along every mode at once (as in equation \eqref{eq::CumulantInfiniteSum}), this is also called the Tucker product. 

\begin{proposition}[$n$th-Order Cumulants]
\label{prop:nthorder}
Let $A \in \mathbb{R}^{d \times d}$ be a Schur stable matrix, meaning it has all eigenvalues $|\lambda_i(A)| < 1$. Suppose $\varepsilon_t$ has finite $n$th order cumulant $\Omega^{(n)}$. Then the steady-state distribution of the VAR(1) model \eqref{eq::VAR(1)process} has finite $n$th cumulant $T_n$, and
\begin{align}
    \label{eq::CumulantInfiniteSum}
	T_n = \sum_{i = 0}^{\infty} \Omega^{(n)} \times_1 A^{i} \times_2 \cdots \times_n A^{i},
\end{align}
where $\times_k$ denotes the $k$-mode product. Two other equivalent equations for $T_n$ are the recursive formula
\begin{align}
\label{eq::DiscreteLyapunovRecursive}
 T_n \;=\; T_n \;\times_1 A \;\times_2 A \;\cdots \;\times_n A 
             \;+\; \Omega^{(n)},
\end{align}
and the vectorized product
\begin{align}
\label{eq::discreteLyapunovVec}
\mathrm{vec}(T_n) = (I - \underbrace{A\otimes\cdots\otimes A}_{n \text{ times}})^{-1}\mathrm{vec}(\Omega^{(n)}).
\end{align}
\end{proposition}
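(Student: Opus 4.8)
The plan is to establish the recursive identity~\eqref{eq::DiscreteLyapunovRecursive} first, directly from the VAR(1) recursion together with stationarity, and then to deduce the vectorized form~\eqref{eq::discreteLyapunovVec} and the infinite sum~\eqref{eq::CumulantInfiniteSum} from it by purely linear-algebraic manipulations. Taking the recursion as the starting point is cleaner than starting from the causal representation~\eqref{eq::stationary-solution}, because it sidesteps the delicate question of interchanging an infinite sum of random variables with the cumulant operation.

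The two properties of joint cumulants that drive the argument are: (i) \emph{multilinearity}, which for a linear map $Y = MX$ reads $\mathrm{cum}_n(Y) = \mathrm{cum}_n(X)\times_1 M \times_2 \cdots \times_n M$; and (ii) \emph{additivity over independent summands}, i.e.\ $\mathrm{cum}_n(X+Y) = \mathrm{cum}_n(X) + \mathrm{cum}_n(Y)$ whenever $X \indep Y$. For finiteness of $T_n$, I would note that the stationary solution~\eqref{eq::stationary-solution} converges in $L^n$: the errors have a finite $n$th moment by hypothesis, and $\|A^j\|$ decays geometrically by Schur stability, so $X_t$ inherits a finite $n$th moment and hence a finite $n$th cumulant.

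For the recursive formula, the crucial observation is that in the causal representation $X_{t-1} = \sum_{j\ge 0} A^j \varepsilon_{t-1-j}$ depends only on the errors indexed at times $\le t-1$; since the errors are independent across time, $X_{t-1} \indep \varepsilon_t$. Applying additivity~(ii) to $X_t = A X_{t-1} + \varepsilon_t$ and then multilinearity~(i) to the summand $A X_{t-1}$ gives
\[
 \mathrm{cum}_n(X_t) = \mathrm{cum}_n(X_{t-1}) \times_1 A \times_2 \cdots \times_n A + \Omega^{(n)}.
\]
Stationarity forces $\mathrm{cum}_n(X_t) = \mathrm{cum}_n(X_{t-1}) = T_n$, which is exactly~\eqref{eq::DiscreteLyapunovRecursive}.

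To pass to the vectorized form, I would invoke the standard vec--Kronecker identity for the Tucker product, $\mathrm{vec}(T \times_1 A \times_2 \cdots \times_n A) = (A^{\otimes n})\,\mathrm{vec}(T)$, where $A^{\otimes n}$ denotes the $n$-fold Kronecker power. This turns~\eqref{eq::DiscreteLyapunovRecursive} into $\mathrm{vec}(T_n) = (A^{\otimes n})\mathrm{vec}(T_n) + \mathrm{vec}(\Omega^{(n)})$. Since the eigenvalues of $A^{\otimes n}$ are the products $\lambda_{i_1}\cdots\lambda_{i_n}$ of eigenvalues of $A$, its spectral radius is $\rho(A)^n < 1$, so $I - A^{\otimes n}$ is invertible and rearranging yields~\eqref{eq::discreteLyapunovVec}. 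The same spectral bound lets me expand $(I - A^{\otimes n})^{-1}$ as the convergent Neumann series $\sum_{i\ge 0}(A^{\otimes n})^i$; undoing the vectorization term by term and using the mixed-product property $(A^{\otimes n})^i = (A^i)^{\otimes n}$ recovers~\eqref{eq::CumulantInfiniteSum}. The only step requiring genuine care is the independence claim $X_{t-1}\indep\varepsilon_t$ together with the $L^n$-convergence needed to legitimize applying~(i) and~(ii); once those are in place, the remainder is bookkeeping with the Kronecker/vec identities and the geometric-series bound.
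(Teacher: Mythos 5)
Your proposal is correct and rests on the same key identity as the paper's proof, namely the one-step cumulant recursion $\mathrm{cum}_n(X_t) = \mathrm{cum}_n(X_{t-1})\times_1 A \cdots \times_n A + \Omega^{(n)}$ obtained from multilinearity and additivity over the independent summands $AX_{t-1}$ and $\varepsilon_t$. Where you differ is in the order of deduction among the three equivalent forms. The paper unravels the recursion forward in time to obtain the infinite series~\eqref{eq::CumulantInfiniteSum} first, argues convergence from $A^i \to 0$, vectorizes to get~\eqref{eq::discreteLyapunovVec}, and then justifies the equivalence with the fixed-point equation~\eqref{eq::DiscreteLyapunovRecursive} either by a stationarity argument or by citing an external uniqueness result for solutions of the recursive equation. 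You instead take the fixed-point equation as primary, derive~\eqref{eq::discreteLyapunovVec} by inverting $I - A^{\otimes n}$ (with the spectral-radius bound $\rho(A^{\otimes n}) = \rho(A)^n < 1$ making invertibility explicit), and recover the sum via the Neumann series together with $(A^{\otimes n})^i = (A^i)^{\otimes n}$. This buys you a fully self-contained equivalence of the three formulas --- uniqueness of the solution to~\eqref{eq::DiscreteLyapunovRecursive} falls out of the invertibility rather than requiring the citation --- and your explicit remarks on $X_{t-1} \indep \varepsilon_t$ and on $L^n$-convergence of the causal representation make precise two points the paper passes over quickly. The paper's route, in exchange, exhibits the infinite sum as the natural limit of the time-$t$ cumulants, which connects more directly to the steady-state interpretation. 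Both arguments are sound.
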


\begin{proof}
From \eqref{eq::VAR(1)process}, we can apply the properties of how cumulants behave under linear and affine transformations (see, e.g., \cite[Section 2.4]{mccullagh2018tensor})  and repeatedly use the linear relation $X_{t+1} = A X_t + \varepsilon_{t+1}$. For $n$th-order cumulants, one obtains the recursive identity
\begin{equation}
\label{eq::RecursionofCumulants}
\text{cum}_n(X_{t+1})
   \;=\; \bigl(\text{cum}_n(X_t)\bigr) \;\times_1 A \;\times_2 A \;\cdots \;\times_n A
         \;+\; \Omega^{(n)}.
\end{equation}

Under stationarity (and by \eqref{eq::stationary-solution}), $X_t$ converges in distribution to the limit $X_\infty$ whose $n$th order cumulant $T_n$ satisfies
\[
   T_n \;=\; \lim_{t\to\infty} \text{cum}_n(X_t).
\]
Unraveling the recursion yields the infinite series \eqref{eq::CumulantInfiniteSum}. Convergence follows because $A$ is Schur stable; so $A^i \to 0$ as $i \to \infty$ sufficiently fast to guarantee that the infinite sum is well-defined.

The vectorized equation \eqref{eq::discreteLyapunovVec} arises by vectorizing \eqref{eq::CumulantInfiniteSum} and using properties of the $k$-mode product and Kronecker product. 

Equation \eqref{eq::DiscreteLyapunovRecursive} also follows from equation \eqref{eq::RecursionofCumulants} since at stationarity the cumulant cannot change from time step to time step, so the cumulant of the steady-state distribution has to satisfy \eqref{eq::DiscreteLyapunovRecursive}. Alternatively, \cite[Corollary 3.2]{xu2022solutions} yields that, when $A$ is Schur stable, \eqref{eq::CumulantInfiniteSum} is the unique solution to the recursive equation \eqref{eq::DiscreteLyapunovRecursive} implying that the two equations are equivalent.
\end{proof}

The recursive equation \eqref{eq::DiscreteLyapunovRecursive} is the generalization of what has classically been known as the (second-order) discrete Lyapunov equation, \eqref{eq:DiscreteLyapunovEqSecondOrder}. Since they are all equivalent, we will refer to all the equations of the form \eqref{eq::CumulantInfiniteSum}, \eqref{eq::DiscreteLyapunovRecursive} and \eqref{eq::discreteLyapunovVec} as \emph{discrete Lyapunov equations}, to differentiate between them the order
of the cumulant should be specified. 

\begin{remark}[Gaussian special case] \label{rem::Gaussian_special_case}
If in addition $\varepsilon_t$ is Gaussian, one recovers the standard result: in the limit, $X_t$ follows a multivariate Gaussian distribution with mean $\mathbf{0}$ and covariance matrix $\Sigma$ satisfying the \emph{discrete Lyapunov equation}
\[
    \Sigma \;=\; A \,\Sigma\, A^T \;+\; D,
\]
where $D$ is the noise covariance. Moreover,
\[
    \Sigma \;=\; \sum_{j=0}^{\infty} A^j \, D\, (A^T)^j \;=\; (I - A \otimes A)^{-1}\vec(D),
\]
highlighting that the requirement $|\lambda_i(A)| < 1$ ensures invertibility of $(I - A \otimes A)$.
\end{remark}

\subsection{Defining the Model via Cumulant Equations} \label{sec:model-definition}

A directed graph $G$ is a pair $(V, E)$, where $V$ is the set of vertices, which we usually denote as $V = \{0\}\cup [p-1]$, and $E\subseteq V\times V$ is the set of edges, which are ordered pairs $(i,j)$, often denoted as $i\to j$. We denote by $\pa(i)$ the set of parents of a node $i\in V$, i.e., $\pa(i) = \{j\in V : j\to i\in E\}$. 

 Given a directed graph $G = (V,E)$, we define the (graphical) \emph{discrete Lyapunov model} of order $n$ as the set of all cumulant tensors satisfying the appropriate discrete Lyapunov equations, where $A$ is a stable matrix with sparsity pattern according to the edges of $G$, that is $a_{ji} = 0$ if $i \rightarrow j \not \in E$. We denote this set by $\mathbb{R}^{E}$. Furthermore, as explained in \cref{sec:higher-order}, the matrix $A$ needs to be Schur stable in order for the steady-state distribution to exist, so we will further assume that $A \in \mathbb{R}^{E}_{\text{stab}}$, the Schur stable matrices which have sparsity pattern according to $G$. In the following let $\text{Sym}_m(\mathbb{R}^l)$ denote the set of symmetric tensor of order $m$ on $\mathbb{R}^l$. 

 \begin{definition}
     \label{def::model}
     Let $G = (V, E)$ be a directed graph. The \emph{$n$th-order discrete Lyapunov cumulant model} of $G$ is the set
    \begin{align*}
    \mathcal{M}_G^{\leq n} = \{(T_2,\ldots, T_n)~:~&T_m = \sum_{i = 0}^{\infty} \Omega^{(m)} \times_1 A^{i} \times_2 \cdots \times_m A^{i} \text{ for all }2\leq m\leq n,\\
    & A \in \mathbb{R}^E_{\mathrm{stab}}, \ \Omega^{(m)} \in \mathrm{Sym}_m(\mathbb{R}^{|V|}) \text{ diagonal for all }2\leq m\leq n\}.  
    \end{align*}

    The \emph{$n$th-order cumulant ideal} of $G$ is the ideal $\mathcal{I}^{\leq n}
(G)$ of polynomials in the entries
$T_2,\ldots, T_n$ that vanish when $(T_2, \ldots, T_n) \in \mathcal{M}^{\leq n}_G$. 
 \end{definition}

Notice that $\mathcal{I}^{\leq n}
(G)$ is a homogeneous ideal. Usually, we will focus on the case where $n = 3$ or $4$, corresponding to Lyapunov models of order at most four.

\begin{example}\label{ex::VanishingIdealTwoNodegraph}
Consider the third-order discrete Lyapunov cumulant model of the graph on two nodes in \cref{fig::TwoNodes1self-loop}. 
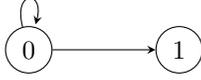
\begin{figure}  
\centering
\begin{tikzpicture}
\node[circle, draw, minimum size=0.5cm] (0) at (0,0) {0};
\node[circle, draw, minimum size=0.5cm] (1) at (2,0) {1};
  
\draw[->,  >=stealth] (0) edge[loop above] (0);
\draw[->,  >=stealth] (0) edge (1);
\end{tikzpicture} 
\caption{A directed graph on two nodes with a single self-loop at the source.}
\label{fig::TwoNodes1self-loop}
\end{figure}
The graph gives rise to the following $A$ matrix, and the error cumulants are assumed to be diagonal: 
\[
A = \begin{pmatrix} a_{00} & 0 \\ a_{10} & 0 \end{pmatrix}, \qquad
\Omega^{(2)} = \begin{pmatrix} \omega_{00} & 0 \\ 0 & \omega_{11} \end{pmatrix}, \qquad
\Omega^{(3)} = \operatorname{diag}(\omega_{000}, \omega_{111}).
\]
The second- and third-order cumulants are given by
\[
S = A S A^\top + \Omega^{(2)}, \qquad
T = T \times_1 A \times_2 A \times_3 A + \Omega^{(3)}.
\]
Writing these in entries yields
\begin{align*}
s_{00} &= a_{00}^2 s_{00} + \omega_{00}, & s_{01} &= a_{00} a_{10} s_{00}, & s_{11} &= a_{10}^2 s_{00} + \omega_{11},\\
t_{000} &= a_{00}^3 t_{000} + \omega_{000}, & t_{001} &= a_{00}^2 a_{10} t_{000}, & t_{011} &= a_{00} a_{10}^2 t_{000}, & t_{111} &= a_{10}^3 t_{000} + \omega_{111}.
\end{align*}
Eliminating the parameters $a_{00}, a_{10}, \omega_{000}$ leads to the polynomial relation, $
s_{01}^3 t_{000}^2 - s_{00}^3 t_{001} t_{011} = 0,$
which generates the third-order cumulant ideal
\[
\mathcal{I}^{\le 3}(G) = \left\langle s_{01}^3 t_{000}^2 - s_{00}^3 t_{001} t_{011} \right\rangle.
\]
\end{example}

\subsection{Identifiability} \label{sec:Identifiability-definition}
In this paper we study identifiability of the parameters in the discrete Lyapunov model for a known graph $G$. Identifiability is about determining injectivity of the map from the set of parameters of interest, $A$, and the noise parameters, $\Omega^{(n)}$, to the model $\mathcal M^{\leq n}_G$. In this section we introduce different notions of identifiability. In \cref{sec::ParameterIdentifiabilityDAGs} we prove generic identifiability of all DAGs (with self-loops) from second-, third- and some fourth-order cumulants, and in \cref{sec:local identifiability} we prove local identifiability for most directed graphs from only the second- and third-order cumulants.

If we let the error cumulants $\Omega^{(n)}$ be represented by their non-zero diagonal entries, then the parameters $ (A, \Omega^{(2)}, \Omega^{(3)}, \Omega^{(4)})$ of the fourth-order discrete Lyapunov model lie in the following set: 
\begin{equation*}
    \Theta_{G, \leq 4} =  \mathbb{R}^{E}_{\text{stab}} \times \mathbb{R}_{+}^p \times (\mathbb{R} \setminus \{ 0 \})^p \times \mathbb{R}_{+} ^p.
\end{equation*}

The question of identifiability for the fourth-order discrete Lyapunov models is then whether the (rational) map $$\varphi_{G, \leq4}: \Theta_{G, \leq4} \rightarrow PD_p \times \text{Sym}^3(\mathbb{R}^p) \times \text{Sym}^{4}(\mathbb{R}^p), \quad (A, \Omega^{(2)}, \Omega^{(3)}, \Omega^{(4)}) \mapsto (S, T, R)$$
is injective,
where  $(S, T, R)$ are the solutions to the corresponding second-, third- and fourth-order discrete Lyapunov equations. The identifiability results will initially be stated as identifying $A$ since it is clear by considering \eqref{eq::DiscreteLyapunovRecursive} that if $A$ can be identified from the cumulants $(S, T, R)$ then we can also identify the error cumulants. 

We provide definitions of several different types of identifiability for a parametrized statistical model, see for example \cite[Chapter 16]{sullivant2018algebraic} for an introduction. By a statistical model we mean a set of probability measures or a representation of such a set, e.g. via cumulants up to a certain order. 

\begin{definition}
\label{def::TypesofID}
    Let $f: \Theta \rightarrow \mathcal{M}_f$ be a rational map from a finite dimensional parameter space $\Theta \subseteq \mathbb{R}^k$ to a statistical model such that $\text{im}(f) = \mathcal{M}_f$. The model is said to be
    \begin{enumerate}[label = (\roman*),leftmargin=*, widest=iii]
        \item \hspace{-6pt} globally identifiable if $f^{-1}(f(\theta)) = \theta$ for all $\theta \in \Theta$;
        \item \hspace{-6pt} generically identifiable if \( f^{-1}(f(\theta)) = \theta \) for all $\theta \in \Theta \hspace{-1pt} \setminus \hspace{-1pt} A$ where $A$ is a proper algebraic subset of~$\Theta$;
        \item \hspace{-6pt} locally identifiable if $|f^{-1}(f(\theta))| < \infty$ for all $\theta \in \Theta \hspace{-1pt} \setminus \hspace{-1pt} A$ where $A$ is a proper algebraic subset of $\Theta$; 
        \item \hspace{-6pt} non-identifiable if $|f^{-1}(f(\theta))| = \infty$ for generic $\theta$.
    \end{enumerate}
    Note that for the second and third notion of identifiability the Zariski closure $\overline{\Theta}$ of $\Theta$ is assumed to be an irreducible variety.
\end{definition}
In some literature, generic (and local) identifiability is defined as identifiability away from a null set. \cref{def::TypesofID} (ii) is more specific than this since an algebraic set is also always a null set. Requiring the exceptional set to be algebraic has several justifications; general null sets can behave pathologically (for example, they may be dense in the ambient space) in ways algebraic sets cannot and in the following theorems it is the stronger version (\cref{def::TypesofID} (ii)) that we are able to prove in all cases. Local identifiability is also known as finite-to-one identifiability. In the case where $\Mcal_f$ is generically identifiable and $f^{-1}(p)$ is a rational function of the entries of $p$ for almost all $p \in \Mcal_f$, we say that $\Mcal_f$ is generically \emph{rationally} identifiable.

We seek to answer the question of identifiability for the discrete Lyapunov models (primarily up to fourth-order) so we will apply the definitions to the map $\varphi_{G, \leq 4}$. The parameter set $\Theta_{G, \leq 4}$ is an open subset (in the standard Euclidean topology) so its Zariski closure is therefore irreducible and \cref{def::TypesofID} (ii) and (iii) can be applied. If the discrete Lyapunov model of a certain order corresponding to a graph $G$ is identifiable it is sometimes denote this as the graph $G$ being identifiable.

Note that the discrete Lyapunov model with unknown errors can never be globally identifiable no matter the order of the cumulants. This is because the subset of the parameter space corresponding to isolating a node in the graph (or isolating all of them by taking $A$ to be diagonal) yields an under determined system (see \cref{rmk:isolatednode}). In the case where $A$ is diagonal, the higher order cumulants in the model are diagonal as well (see the treks rules in the following section) with entries only depending on the corresponding diagonal entries of $A$ and the respective error cumulants. This is why the results of \cref{sec::ParameterIdentifiabilityDAGs} and \cref{sec:local identifiability} focus on generic and local identifiability. 

Furthermore, if one were to consider only the second-order discrete Lyapunov model with unknown diagonal errors, it is not possible to prove generic identifiability without imposing sign conditions on several entries of $A$.
As discussed by \cite{young2019identifying}, this can be immediately seen for the second-order discrete Lyapunov model by observing that flipping the signs of a parameter matrix $A \in \mathbb{R}^{E}_{\text{stab}}$ to $-A \in \mathbb{R}^{E}_{\text{stab}}$ yields the same covariance matrix. This is why the results of \cref{sec::ParameterIdentifiabilityDAGs} and \cref{sec:local identifiability} use third- and fourth-order cumulants as well as the covariance.

\section{Trek Parametrization} \label{sec::TrekParametrization}

In this section we consider the individual entries of the cumulants $T_n$ and the expression for each such entry in terms of the parameters $A$ and $\Omega^{(n)}$ induced by the formulas in equation~\eqref{eq::CumulantInfiniteSum}. In \cref{sec:trek_rules} we give a combinatorial way, called the trek rule, to read off these expressions directly from the graph. In \cref{sec:restricted_trek} we show how in special cases one can simplify the expressions for the entries of $T_n$. Finally, in \cref{sec:marginal} we show how missing treks in the graph imply marginal independence between some of the random variables.

\subsection{Trek Rules}\label{sec:trek_rules}
Trek rules are commonly used to express the entries of the moments and cumulants in  linear structural equation models~\cite{sullivant2010trek, robeva2021multitrek, amendola2023third-order} and continuous Lyapunov models~\cite{boege2024conditional, hansen2025trekrulelyapunovequation}. We prove that similar rules hold for the expression of the entries of the cumulants $T_n$ in our discrete Lyapunov models, with the only difference that the paths of the treks need to have the same length. We call such treks {\em equitreks}. They are a specialization of the treks
introduced in \cite{sullivant2010trek, robeva2021multitrek} for linear structural equation models.

\begin{definition}
    Let $G = (V, E)$ be a directed graph. A $k$-equitrek between $k$ vertices $i_1, \dots, i_k$ is an ordered set of directed paths $(P_1, \dots, P_k)$ of the same length $l$ from a common source $v \in V$ to sinks $i_1, \dots, i_k$, respectively. We say that such a $k$-equitrek has length $l$. We let $\mathcal{T}(i_1, \dots, i_k)$ denote the set of all $k$-equitreks between the nodes $i_1, \dots, i_k$. 
\end{definition}

Rewriting the formula for $T_n$ obtained in \cref{prop:nthorder} yields the following trek rule with the proof given in \cref{sec:proofsTrekParametrization}. The cumulant $\Omega^{(n)}$ of the noise vector $\varepsilon_t$ is a diagonal tensor, so we only use its $p$ diagonal entries denoted by $w_1^{(n)},\ldots, w_p^{(n)}$ to simplify the notation.

\begin{proposition}
\label{prop::trekrule}
    Let $G = (V, E)$ be a directed graph on $p$ nodes, where $V = \{0\}\cup [p-1]$, and let the cumulant $T_n$ satisfy the  $n$th order discrete Lyapunov equation~\eqref{eq::CumulantInfiniteSum}. Then $T_n$ satisfies the trek rule
    \begin{equation*}
        (T_n)_{i_1, \dots i_n} 
      = \sum_{\tau = (\tau_1, \dots , \tau_n) \in \mathcal{T}(i_1, \dots, i_n)} w^{(n)}_{top(\tau)} a^{\tau_1} \cdots a^{\tau_n},
    \end{equation*}
    where $a^{P}$ denotes the path monomial associated to the path $P$ given as $a^{P} = \prod_{\alpha \rightarrow \beta \in P} a_{\beta \alpha}$.
\end{proposition}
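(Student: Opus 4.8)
The plan is to prove \cref{prop::trekrule} by unwinding the infinite-sum formula \eqref{eq::CumulantInfiniteSum} entry by entry and recognizing the resulting products as a sum over equitreks. First I would fix indices $i_1,\ldots,i_n$ and write out the $(i_1,\ldots,i_n)$ entry of the Tucker product $\Omega^{(n)} \times_1 A^i \times_2 \cdots \times_n A^i$ using the definition of the $k$-mode product given in the excerpt. Since $\Omega^{(n)}$ is diagonal with diagonal entries $w^{(n)}_1,\ldots,w^{(n)}_p$, the only surviving summand in the internal contraction is the one where all $n$ tensor indices coincide at a common value $v \in V$. Concretely, this yields
\begin{equation*}
    (\Omega^{(n)} \times_1 A^i \times_2 \cdots \times_n A^i)_{i_1,\ldots,i_n}
    = \sum_{v \in V} w^{(n)}_v \, (A^i)_{i_1 v} (A^i)_{i_2 v} \cdots (A^i)_{i_n v}.
\end{equation*}
Summing over $i$ from $0$ to $\infty$ then gives $(T_n)_{i_1,\ldots,i_n} = \sum_{v \in V} w^{(n)}_v \sum_{i=0}^\infty (A^i)_{i_1 v} \cdots (A^i)_{i_n v}$.

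The next step is the standard combinatorial interpretation of matrix powers: the entry $(A^i)_{jv}$ equals the sum over all directed walks of length exactly $i$ from $v$ to $j$ in $G$ of the associated path monomial $a^P = \prod_{\alpha \to \beta \in P} a_{\beta\alpha}$. This follows because $(A^i)_{jv} = \sum a_{j k_{i-1}} a_{k_{i-1} k_{i-2}} \cdots a_{k_1 v}$, where the sum ranges over all intermediate index sequences, and each nonzero term corresponds exactly to a length-$i$ walk $v \to k_1 \to \cdots \to j$ respecting the edge set $E$ (by the sparsity convention $a_{\beta\alpha} = 0$ when $\alpha \to \beta \notin E$). Substituting this into the expression above and expanding the product $(A^i)_{i_1 v} \cdots (A^i)_{i_n v}$, I would collect the data $(P_1,\ldots,P_n)$ of $n$ walks, each of the \emph{same} length $i$, emanating from the common source $v$ and terminating at $i_1,\ldots,i_n$ respectively. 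This is precisely an $n$-equitrek of length $i$ with top $v$, and its contribution is $w^{(n)}_{\text{top}(\tau)} a^{\tau_1} \cdots a^{\tau_n}$. Reindexing the double sum over $v$ and $i$ as a single sum over $\tau \in \mathcal{T}(i_1,\ldots,i_n)$ yields the claimed formula.

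The one genuine subtlety, and the step I would treat most carefully, is the interchange of the infinite summation over $i$ with the finite expansions: since each length-$i$ walk contributes to exactly one value of $i$, there is no overcounting, but one must confirm absolute convergence so that the reindexing is legitimate. This is guaranteed by Schur stability of $A$, which is exactly what makes the series in \eqref{eq::CumulantInfiniteSum} converge absolutely (as noted in the proof of \cref{prop:nthorder}); consequently the sum over all equitreks converges absolutely and may be rearranged freely. A minor bookkeeping point is the $i=0$ term, for which $A^0 = I$ contributes only the diagonal entries $(T_n)_{i_1,\ldots,i_n}$ with $i_1 = \cdots = i_n = v$ via the length-$0$ equitrek consisting of trivial paths; this is consistent with the convention that $\mathcal{T}(i_1,\ldots,i_n)$ includes zero-length equitreks when all the target nodes coincide. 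With convergence secured and the walk-counting identity in hand, the remainder is a direct matching of terms, so I do not anticipate further obstacles.
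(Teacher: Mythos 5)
Your proposal is correct and follows essentially the same route as the paper's proof: expand the Tucker product entrywise, use diagonality of $\Omega^{(n)}$ to reduce to a single sum over the top vertex, identify $(A^i)_{jv}$ with the sum of path monomials over length-$i$ walks from $v$ to $j$, and reindex the double sum as a sum over equitreks. Your additional remarks on absolute convergence and the $i=0$ term are sound and only make explicit what the paper leaves implicit via Schur stability.
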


\begin{remark}
    If the error cumulant $\Omega^{(n)}$ is not assumed to be diagonal, the top of a trek can be considered to be an $n$-fold blunt (or multidirected) edge corresponding to a non-zero entry in $\Omega^{(n)}$. In this way each of the paths could potentially start at different vertices. It is clear that the proof of~\cref{prop::trekrule} generalizes to this case. See, for example, \cite{varando2020graphical, foygel2012HTC} for a trek rule of this nature in the covariance case for linear structural equation models and continuous Lyapunov models, respectively. 
\end{remark}

The entries of the second- and third-order cumulants in a discrete Lyapunov model are then explicitly given by the following equations.

\begin{corollary}
\label{cor::trekrulefor2ndand3rd}
    Let $G = ([p], E)$ be a directed graph. 
    For $(S, T,\dots) \in \mathcal{M}^{\leq n}_G$ with $n\ge3$, we obtain the following trek rules: 
    \begin{equation*}
        s_{ij} = \sum_{\tau = (\tau_1, \tau_2) \in \mathcal{T}(i, j)} w^{(2)}_{top(\tau)} a^{\tau_1} a^{\tau_2}, \qquad t_{ijk} = \sum_{\tau = (\tau_1, \tau_2, \tau_3) \in \mathcal{T}(i, j, k)} w^{(3)}_{top(\tau)} a^{\tau_1} a^{\tau_2} a^{\tau_3}.
    \end{equation*}
\end{corollary}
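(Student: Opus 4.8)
The plan is to specialize the general trek rule from \cref{prop::trekrule} to the cases $n=2$ and $n=3$ by simply substituting these values into the proposition's formula and unpacking what the objects become. Since \cref{prop::trekrule} is already stated (and proved elsewhere, as noted in the excerpt), the corollary is essentially a direct instantiation. First I would invoke \cref{prop::trekrule} for the second-order cumulant by setting $n=2$. The formula then reads $(T_2)_{i_1 i_2} = \sum_{\tau = (\tau_1, \tau_2) \in \mathcal{T}(i_1, i_2)} w^{(2)}_{top(\tau)} a^{\tau_1} a^{\tau_2}$. Renaming the index pair $(i_1, i_2)$ as $(i,j)$ and writing $S = T_2$ with entries $s_{ij}$ yields the first displayed equation exactly. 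No further computation is required beyond matching notation.

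Next I would do the same for $n=3$, setting $T_3 = T$ with entries $t_{ijk}$ and relabeling $(i_1, i_2, i_3) = (i,j,k)$. Substituting $n=3$ into \cref{prop::trekrule} gives $t_{ijk} = \sum_{\tau = (\tau_1, \tau_2, \tau_3) \in \mathcal{T}(i,j,k)} w^{(3)}_{top(\tau)} a^{\tau_1} a^{\tau_2} a^{\tau_3}$, which is the second displayed equation. The only genuine content is observing that a $2$-equitrek is a pair of equal-length directed paths from a common source and a $3$-equitrek is a triple of such paths, so the sets $\mathcal{T}(i,j)$ and $\mathcal{T}(i,j,k)$ are precisely the index sets appearing in the corollary. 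One should also remark that the hypothesis $(S,T,\dots) \in \mathcal{M}^{\leq n}_G$ with $n \ge 3$ guarantees that both $S$ and $T$ satisfy their respective discrete Lyapunov equations \eqref{eq::CumulantInfiniteSum}, so \cref{prop::trekrule} applies to each.

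Honestly, there is no real obstacle here: the statement is a corollary precisely because it is a routine specialization, and the substantive work lives in \cref{prop::trekrule} itself. The only thing to be careful about is bookkeeping of the notation, namely confirming that $w^{(2)}_{top(\tau)}$ and $w^{(3)}_{top(\tau)}$ denote the diagonal error-cumulant entries at the common source $top(\tau)$ of the equitrek, and that the path monomials $a^{\tau_\ell} = \prod_{\alpha \to \beta \in \tau_\ell} a_{\beta\alpha}$ are inherited directly from the proposition. I would therefore write the proof as a single short sentence: the result follows by setting $n = 2$ and $n = 3$, respectively, in \cref{prop::trekrule} and renaming indices. If more detail is desired, I would spell out that a $k$-equitrek with $k=2$ or $k=3$ is exactly a pair or triple of equal-length paths from a common top, making the correspondence with $\mathcal{T}(i,j)$ and $\mathcal{T}(i,j,k)$ transparent.
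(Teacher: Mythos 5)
Your proposal is correct and matches the paper exactly: the paper states this corollary with no separate proof, treating it as the immediate specialization of \cref{prop::trekrule} to $n=2$ and $n=3$, which is precisely what you do. Your added remarks on notation and on the hypothesis $(S,T,\dots)\in\mathcal{M}^{\leq n}_G$ guaranteeing that both $S$ and $T$ satisfy their respective Lyapunov equations are accurate but not needed beyond a one-line citation.
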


The following example illustrates the trek rule.

\begin{example}
\label{ex::trekrule}
Let $G$ be the same graph on two nodes as in \cref{ex::VanishingIdealTwoNodegraph}, shown in \cref{fig::TwoNodes1self-loop}. 
Then, by using the formulas from \cref{cor::trekrulefor2ndand3rd}, we get $$s_{00} = \sum_{t=0}^{\infty} w^{(2)}_{0} a_{00}^{2t} = \frac{w^{(2)}_{00}}{1-a_{00}^2}, \qquad s_{01} = s_{10} = \sum_{t=0}^{\infty} w^{(2)}_{0} a_{00}a_{10} a_{00}^{2t} = \frac{w^{(2)}_{0}a_{00}a_{10}}{1-a_{00}^2}, \qquad s_{11}=w^{(2)}_{1}.$$

Analogously,
\begin{equation*}
t_{000} = \frac{w^{(3)}_{0}}{1-a^3_{00}}, \qquad 
t_{001}=\frac{w^{(3)}_0a_{00}^2a_{10}}{1-a^3_{00}}, \qquad t_{011}=\frac{w^{(3)}_0a_{00}a_{10}^2}{1-a^3_{00}}, \qquad t_{111} = w_1^{(3)}.
\end{equation*}
\end{example}

An intuitive explanation of our trek rule is as follows. If we think about the VAR(1) process, in order to calculate the cumulant for two or more variables from the same time point, we would need to trace all possible tuples of paths that start at a common source back in time and end at the variables of interest. Since the variables are at the same time point, the paths will have the same length, which means they form an equitrek. This is also why if there are no cycles (for example self-loops) there are very few equitreks in the graph and many entries of the cumulants will be zero. 

\begin{remark}
    \cite{liu2025identifiability} introduce \emph{maximal classes} as the set of all nodes reachable from a source strongly connected component. 
    The concept of treks offers a complementary viewpoint by interpreting these classes as sets of nodes connected by treks where all top nodes belong to the same source strongly connected component.
    Our trek rule allows us to reinterpret their structural identifiability results in the language of treks, as they relate the notion of maximal classes to entries of $\Sigma$ being non-zero.
\end{remark}

\subsection{A Restricted Trek Rule for DAGs}\label{sec:restricted_trek}
Since the infinite sums in the general trek rule may be hard to compute, we derive a trek rule for DAGs in a restriction of the model where we assume that the entries on the diagonal of the parameter matrix $A$ are constant, i.e., that $a_{ii} = t$ for all $i \in V$. This allows us to obtain a finite sum representation. 
Similar restrictions have been considered by \cite{boege2024conditional} and \cite{hansen2025trekrulelyapunovequation} in the continuous Lyapunov model based on second-order cumulants.

To formulate the trek rule in a concise way, we define the following notation. Let $\mathcal{T}^*(i,j)$ be the set of \emph{base treks} between $i$ and $j$, that is, the set of treks that visit each node on the trek only once on each path of the trek (i.e., no self-loops and cycles). Note that we do not require these treks to be equitreks. For a base trek $\tau=(\tau_i,\tau_j) \in \mathcal{T}^*(i,j)$, let $d(\tau_i)$ be the edge-length of the path $\tau_i$, i.e., the distance measured in number of edges between the top node, $\mathrm{top}(\tau)$, and the leaf $i$. Note that $\tau$ does not contain self-loops, so the path monomial $a^{\tau_i}$ only contains off-diagonal entries of $A$. 

The idea is to collect all terms that account for the edges of a base trek between $i$ and $j$ (a monomial in the off-diagonal entries of $A$) as well as collect the terms that encode the different ways of adding self-loops to this base trek to obtain all possible equitreks between $i$ and $j$ with this underlying base trek (a rational function in $t$). 
This rational function in $t$ is the product of a power of $t$ balancing out the potentially different lengths of both legs of a base trek and of a rational function whose numerator is a polynomial $p_{x,y}(t)$ and whose denominator is a power of $(1-t^2)$ accounting for all combinations in which arbitrary numbers of self-loops can be added along the trek while still obtaining an equitrek. 
We derive the following finite sum representation for the entries of the covariance matrix $S$.
\begin{proposition} \label{prop:trek_rule_dag}
    Let $G$ be a DAG with constant self-loop parameter $t$ and consider the corresponding discrete Lyapunov model. Then the entries of the covariance matrices $S$ in the model are parametrized by the following base trek rule
    \begin{equation} \label{eq:trek_rule_dag}
        s_{ij} = \sum_{\tau=(\tau_i,\tau_j) \in \mathcal{T}^*(i,j)} C(d(\tau_i),d(\tau_j);t) a^{\tau_i}a^{\tau_j} \omega_{\mathrm{top}(\tau)}^{(2)}, 
    \end{equation}
    where for distances $x,y \geq 0$ along the trek, the rational function $C(x,y;t)$ is defined as
    \begin{equation*}
        C(x,y;t) = t^{|x-y|} \frac{p_{x,y}(t)}{(1-t^2)^{x+y+1}},
    \end{equation*}
    and the polynomial $p_{x,y}(t)$  in the numerator  for distances $x, y \geq 0$ is given by
\begin{equation*}
    p_{x,y}(t) 
    = \sum_{l = 0}^{\min(x,y)} t^{2l} \binom{\max(x,y)}{\min(x,y)-l}\binom{\min(x,y)}{l} 
    = \sum_{l = 0}^{\min(x,y)} t^{2(l-\min(x,y))} \binom{x}{l}\binom{y}{l} .
\end{equation*}
\end{proposition}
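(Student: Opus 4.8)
The plan is to start from the second-order equitrek rule of \cref{cor::trekrulefor2ndand3rd}, which writes $s_{ij}=\sum_{\tau=(\tau_1,\tau_2)\in\mathcal{T}(i,j)}\omega^{(2)}_{\mathrm{top}(\tau)}\,a^{\tau_1}a^{\tau_2}$, and to reorganize this infinite sum by grouping equitreks according to their underlying base trek. Because $G$ is a DAG, every directed walk decomposes uniquely into a base path (the non-loop steps move to strictly later vertices, so they form a simple directed path using only off-diagonal edges) together with a choice of how many self-loops to insert at each visited vertex; since $a_{vv}=t$ for all $v$, each inserted self-loop contributes a factor $t$. Deleting all self-loops from the two legs of an equitrek therefore yields a unique base trek in $\mathcal{T}^*(i,j)$, so grouping by this base trek partitions $\mathcal{T}(i,j)$. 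Fixing a base trek $\tau=(\tau_i,\tau_j)$ with leg-lengths $x=d(\tau_i)$, $y=d(\tau_j)$ and common top, the equitreks refining it are exactly the pairs of \emph{equal-length} walks obtained by decorating the two legs independently with self-loops, each carrying weight $a^{\tau_i}a^{\tau_j}\omega^{(2)}_{\mathrm{top}(\tau)}$ times a power of $t$. This reduces the claim to evaluating the scalar coefficient that collects all admissible decorations.

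First I would count the decorations. A walk of total length $k$ along a base path of edge-length $x$ corresponds to distributing $k-x$ self-loops among the $x+1$ vertices of the path; by stars and bars there are $\binom{k}{x}$ such walks, each of weight $a^{\tau_i}t^{\,k-x}$ (and independently $\binom{k}{y}$ walks of weight $a^{\tau_j}t^{\,k-y}$ along the other leg). The equitrek constraint forces a common length $k$, so the coefficient of $a^{\tau_i}a^{\tau_j}\omega^{(2)}_{\mathrm{top}(\tau)}$ is
\[
C(x,y;t)=\sum_{k\ge\max(x,y)}\binom{k}{x}\binom{k}{y}\,t^{\,2k-x-y}.
\]
It remains to show that this series equals $t^{|x-y|}p_{x,y}(t)/(1-t^2)^{x+y+1}$. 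Matching the series to the precise numerator $p_{x,y}$ and to a uniform denominator $(1-t^2)^{x+y+1}$ is the main obstacle, since the natural expansions produce varying powers of $(1-t^2)$.

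To evaluate the series I would expand the product of binomials into single binomials via the Vandermonde-type identity $\binom{k}{x}\binom{k}{y}=\sum_{l=0}^{\min(x,y)}\binom{x}{l}\binom{x+y-l}{x}\binom{k}{x+y-l}$ (apply Vandermonde to $\binom{k}{y}=\binom{x+(k-x)}{y}$ and then the subset-of-subset identity), and apply the elementary generating function $\sum_{k}\binom{k}{s}t^{2k}=t^{2s}/(1-t^2)^{s+1}$ termwise. Collecting over the common denominator $(1-t^2)^{x+y+1}$ gives $C(x,y;t)=(1-t^2)^{-(x+y+1)}\sum_{l}\binom{x}{l}\binom{x+y-l}{x}t^{x+y-2l}(1-t^2)^{l}$. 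Assuming without loss of generality $x\le y$ (both sides are symmetric in $x,y$) and factoring out $t^{y-x}=t^{|x-y|}$, the remaining identity, with $u=t^2$, is
\[
\sum_{l=0}^{x}\binom{x}{l}\binom{x+y-l}{x}u^{\,x-l}(1-u)^{l}=\sum_{m=0}^{x}\binom{x}{m}\binom{y}{x-m}u^{m}=p_{x,y}(t).
\]

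The cleanest way I see to finish is coefficient extraction. Writing $\binom{x+y-l}{x}=[w^x](1+w)^{x+y-l}$, the left-hand side becomes $[w^x](1+w)^{x+y}\sum_{l}\binom{x}{l}u^{x-l}\bigl((1-u)/(1+w)\bigr)^{l}$, and the inner sum collapses by the binomial theorem to $\bigl(u+(1-u)/(1+w)\bigr)^{x}$. Simplifying yields $[w^x](1+w)^{y}(1+uw)^{x}$, whose coefficient of $w^x$ is exactly $\sum_{m}\binom{x}{m}\binom{y}{x-m}u^{m}=p_{x,y}(t)$, which closes the argument. I expect the two delicate steps to be the bookkeeping of the self-loop decorations (verifying that the count is $\binom{k}{x}$ and that both legs are decorated independently above a single shared top) and the final binomial identity; the latter is the genuine computational crux, and the coefficient-extraction trick is what makes it short.
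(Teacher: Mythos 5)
Your proof is correct, but it takes a genuinely different route from the paper's. The paper proves \cref{prop:trek_rule_dag} by induction on a lexicographic ordering of the entries $s_{ij}$, using the recursive Lyapunov equation $s_{ij}=\sum_{k,l}a_{ik}a_{jl}s_{kl}+\delta_{ij}\omega_i^{(2)}$ and reducing the induction step to the recursive identities for $C$ and $p$ established in \cref{lem:recursion_trek_rule_dag} (which in turn require a case split between $x<y$ and $x=y$). You instead work directly from the equitrek rule of \cref{cor::trekrulefor2ndand3rd}: in a DAG every equitrek strips uniquely to a base trek, the stars-and-bars count $\binom{k}{x}$ of self-loop decorations is right, and your closed form $C(x,y;t)=\sum_{k}\binom{k}{x}\binom{k}{y}t^{2k-x-y}$ is a clean intrinsic description of the coefficient that the paper never writes down. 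Your subset-intersection expansion of $\binom{k}{x}\binom{k}{y}$ and the coefficient-extraction step both check out (I verified the collapse to $[w^x](1+w)^y(1+uw)^x$, which is exactly the generating-function characterization of $p_{x,y}$ that the paper also uses inside its lemma, so the two arguments meet at the same identity from opposite directions). Your approach buys a uniform treatment of $x=y$ and $x\neq y$ and an explicit series for $C$ that makes the higher-order conjecture in \cref{app::dag_trek_rule_higher_cum} more transparent; the paper's induction buys a template that matches how the recursive Lyapunov equations are actually used elsewhere in the text. The only point you should make explicit is that regrouping the infinite equitrek sum by base trek is a rearrangement of an absolutely convergent series, justified by Schur stability of $A$ (here $|t|<1$); this is routine but worth a sentence.
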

Note that for example when $x \leq y$, we can simplify
\begin{equation*}
    p_{x,y}(t) = \sum_{l = 0}^{x} t^{2l} \binom{y}{x-l}\binom{x}{l}.
\end{equation*}

The proof can be found in \cref{sec:proofsTrekParametrization}. This result is parallel to the restricted trek rule for DAGs in the continuous Lyapunov model proposed by \cite{boege2024conditional}. However, while in the continuous setting the trek monomial is weighted by a binomial coefficient, here it is weighted by a sum of binomial coefficients together with a rational term accounting for the equitrek parametrization of the model.
The proof of \cref{prop:trek_rule_dag} is based on induction by employing the recursive formula for the entries of $S$ and can be found in \cref{app:Proof_Prop_3.7}.
The restricted trek rule simplifies significantly in the case of a tree or a directed path as demonstrated in the following examples.

\begin{example}
    In the case of a (poly)tree, two nodes $i$ and $j$ have at most one common source in the graph. 
    If $i$ and $j$ have a common source, there is a base trek $\tau = (\tau_i,\tau_j)$ with this source node as its top node. Every base trek between $i$ and $j$ is contained in the trek $\tau$ and has a common ancestor $k$ of $i$ and $j$ as its top node. We denote this base trek by $\tau^k = (\tau_i^k,\tau_j^k)$. Consequently, the set of base treks between $i$ and $j$ can directly be parametrized by the common ancestors of $i$ and $j$.
    Further let $z = \max (\mathrm{An}(i) \cap \mathrm{An}(j))$ be the common ancestor closest to $i$ and $j$ where $\mathrm{An}(i)$ denotes the ancestors of a node $i$ including $i$ itself. Then the trek rule simplifies further to 
    \[
    s_{ij} = t^{|d(\tau_i^z)-d(\tau_j^z)|}\sum_{k \in \mathrm{An}(i) \cap \mathrm{An}(j)}\frac{p_{d(\tau_i^k),d(\tau_j^k)}(t)}{(1-t^2)^{d(\tau_i^k)+d(\tau_j^k)+1}} a^{\tau_i^k}a^{\tau_j^k} w_k^{(2)}.
    \]
\end{example}
    
\begin{example}
    The case of a directed path yields an even simpler formula. Assuming a topological order, the distance between two nodes is given by the difference of their indices.
    The trek rule is then 
    \[
    s_{ij} = t^{|i-j|}\sum_{k = 0}^{\mathrm{min}(i,j)} \frac{p_{i-k,j-k}(t)}{(1-t^2)^{(i-k)+(j-k)+1}} a^{\tau_i^k} a^{\tau_j^k} w_k^{(2)}.
    \]
    This illustrates the role of each term in the formula: the first factor $t^{|i-j|}$ adds the minimum number of missing self-loops to turn a base trek into an equitrek. The index $k$ ranges over every possible top node of a trek between $i$ and $j$. The denominator accounts for the infinite number of additional self-loops that can be added while maintaining equal length on both sides. The numerator $p_{i-k,j-k}(t)$ accounts for all possible ways the self-loops can be positioned along the considered trek with top node $k$.
    For illustration, we compute some of these entries for the directed path on four nodes. The appearing polynomials $p_{x,y}(t)$ can be ordered into a symmetric square matrix for $x,y \in \{0,1,2,3\}$ as follows:
    \begin{align*}
    \left[p_{x,y}(t)\right]_{x,y=0,\dots,3} & =
    \begin{bmatrix}
        1 & 1 & 1 & 1 \\
        1 & 1+t^2 & 2+t^2 & 3+t^2 \\
        1 & 2+t^2 & 1+2\cdot 2 t^2+t^4 & 3+2\cdot 3t^2+t^4 \\
        1 & 3+t^2 & 3+2\cdot 3t^2+t^4 & 1+3 \cdot 3 t^2+3\cdot3 t^4+t^6
    \end{bmatrix}.
    \end{align*}
    Observe that the coefficients of $t^0$ correspond to Pascal's triangle (as they are given by $\binom{x}{y}$ for $x \geq y$) while the other coefficients are scaled and shifted versions thereof.
    Then we can for example write out
    \[
    s_{23} = \frac{t(3+6t^2+t^4)}{(1-t^2)^6} a_{10}^2a_{21}^2a_{32}w_0^{(2)} + \frac{t(2+t^2)}{(1-t^2)^4} a_{21}^2a_{32}w_1^{(2)} + \frac{t\cdot 1}{(1-t^2)^2} a_{32}w_2^{(2)}.
    \]
\end{example}
\bigskip

The restricted trek rule for DAGs can be extended to higher-order cumulants as 
\begin{align*}
    (T_n)_{i_1,\dots,i_n} &= \sum_{\tau = (\tau_1, \dots , \tau_n) \in \mathcal{T}^*(i_1, \dots, i_n)} C(d(\tau_1),\dots,d(\tau_n);t) a^{\tau_1} \cdots a^{\tau_n}w^{(n)}_{top(\tau)} \\
    \text{where } \quad C(x_1,\dots,x_n;t) &= t^{n\cdot\max(x_1,\dots,x_n)-\sum_{l=1}^n x_l} \frac{p_{x_1,\dots,x_n}(t)}{(1-t^n)^{\sum_{l=1}^n x_l +1}}.
\end{align*}

While the same induction strategy as for proving \cref{prop:trek_rule_dag} can be applied, indexing the terms and defining $p$ become much more involved. 
We conjecture that $p_{x,y}(t)$ generalizes to higher $n$ as
\[
p_{x_1,\dots,x_n}(t) = \hspace{-8pt} \sum_{l = 0}^{\sum_{i=1}^n x_i -\max(x_1,\dots,x_n)} \hspace{-7pt} t^{n(\sum_{i=1}^n x_i -\max(x_1,\dots,x_n)-l)} \sum_{k = 0}^l (-1)^{l-k} \binom{x_1+\cdots+x_1+1}{l-k}\prod_{i=1}^n\binom{x_i+k}{k}.
\]
Intuition on this based on generating functions of known integer sequences is given in \cref{app::dag_trek_rule_higher_cum}.

\subsection{Marginal Independence}\label{sec:marginal}

Similar to \cite{varando2020graphical} and \cite{boege2024conditional} in the continuous setting, we can derive marginal independencies from missing treks in the graph. The trek parametrization allows us to infer the marginal independence properties of the distributions in the discrete Lyapunov model based on results connecting higher-order cumulants and independence. 
Consider a multivariate distribution that is uniquely determined by its moments. If all mixed higher-order cumulants of $X_i$ and $X_j$ of such a distribution are zero, then $X_i$ and $X_j$ are marginally independent \cite{mccullagh2018tensor}. Combining this result with the parametrization of the cumulants via equitreks, we obtain the following result on marginal independence in the discrete Lyapunov model. Note that this statement can also be extended to a partition of the indices $I \cup J = V$ where there is no equitrek between the two sets of nodes. 

\begin{lemma}
    Let $G$ be a directed graph and consider a distribution in the discrete Lyapunov model $\mathcal{M}_G$ that is uniquely determined by its moments. If there is no equitrek between $i$ and $j$ in $G$, then all mixed higher-order cumulants of $X_i$ and $X_j$ are zero. 
    In particular, $X_i \indep X_j$ holds in the considered distribution.
\end{lemma}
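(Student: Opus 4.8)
The plan is to combine the trek rule of \cref{prop::trekrule} with the cited cumulant criterion for marginal independence from \cite{mccullagh2018tensor}. Recall that the mixed higher-order cumulants of $X_i$ and $X_j$ are precisely the tensor entries $(T_n)_{i_1,\dots,i_n}$ for $n\ge 2$ whose indices all lie in $\{i,j\}$ and in which both $i$ and $j$ occur at least once. Since we assume the distribution is determined by its moments, it suffices to show that every such mixed entry vanishes; the criterion of \cite{mccullagh2018tensor} then upgrades this to $X_i\indep X_j$.

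First I would fix such a mixed index tuple $(i_1,\dots,i_n)\in\{i,j\}^n$ having at least one coordinate equal to $i$ and at least one equal to $j$, and apply \cref{prop::trekrule} to write
\begin{equation*}
(T_n)_{i_1,\dots,i_n} = \sum_{\tau=(\tau_1,\dots,\tau_n)\in\mathcal{T}(i_1,\dots,i_n)} w^{(n)}_{\mathrm{top}(\tau)}\, a^{\tau_1}\cdots a^{\tau_n}.
\end{equation*}
Thus the entry is identically zero as a polynomial in the parameters as soon as the indexing set $\mathcal{T}(i_1,\dots,i_n)$ is empty (an empty sum), and in particular it vanishes for the distribution under consideration. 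For $n=2$ the only mixed tuples are $(i,j)$ and $(j,i)$, so $s_{ij}=0$ is immediate from the hypothesis $\mathcal{T}(i,j)=\emptyset$; the content is therefore in the higher orders.

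The key combinatorial step is to show that $\mathcal{T}(i_1,\dots,i_n)=\emptyset$ for \emph{every} mixed tuple, using only the hypothesis that $\mathcal{T}(i,j)=\emptyset$. Suppose toward a contradiction that some $\tau=(\tau_1,\dots,\tau_n)\in\mathcal{T}(i_1,\dots,i_n)$ exists. By the definition of an equitrek, all of its legs share a common source $v$ and a common length $\ell$. Since the tuple is mixed, I can choose an index $a$ with $i_a=i$ and an index $b$ with $i_b=j$. Then $(\tau_a,\tau_b)$ is an ordered pair of directed paths of equal length $\ell$ from the common source $v$ to the sinks $i$ and $j$, i.e.\ a $2$-equitrek in $\mathcal{T}(i,j)$. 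This contradicts the assumption that there is no equitrek between $i$ and $j$. Hence no mixed equitrek exists, every mixed cumulant entry is zero, and applying the independence criterion completes the argument.

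I expect the proof to be essentially routine once the correct reading of ``mixed cumulant'' is fixed; the two points requiring care are (i) making explicit that marginal independence only concerns cumulant entries in which \emph{both} $i$ and $j$ appear, so the pure marginal cumulants of $X_i$ and $X_j$ individually may remain nonzero, and (ii) the moment-determinacy hypothesis, which is exactly what licenses passing from vanishing of all mixed cumulants to genuine independence. The only genuine (and very mild) obstacle is the restriction step: one must verify that discarding all but two legs of an $n$-equitrek really does produce a valid $2$-equitrek, which relies precisely on the common-source and common-length conditions built into the definition of an equitrek.
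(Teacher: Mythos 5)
Your proof is correct and follows the same route as the paper: express each mixed cumulant entry via the trek rule of \cref{prop::trekrule}, observe that the absence of an equitrek between $i$ and $j$ forces the indexing set to be empty, and invoke the moment-determinacy criterion to conclude independence. The only difference is that you make explicit the restriction step (discarding all but two legs of a putative mixed $n$-equitrek yields a $2$-equitrek between $i$ and $j$), which the paper's one-line proof leaves implicit; this is a worthwhile detail but not a different argument.
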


\begin{proof}
    The trek rule allows us to write the $i\cdots ij\cdots j$-th entry of any higher order cumulant as a sum over all equitreks between $i$ and $j$ with the corresponding number of leafs.
    If there is no equitrek between $i$ and $j$, then all corresponding mixed higher-order cumulants are zero. As detailed above, this implies $X_i \indep X_j$ for all such distributions. 
\end{proof}

This is a more general statement than \cite[Cor. 2.3]{varando2020graphical}, as it includes non-Gaussian distributions. 
However, as in the continuous time setting, we can choose to restrict the error distributions of the underlying VAR(1) processes to be Gaussian. Then all cumulants of order higher than two are zero,
so it is enough to study the covariance structure. To emphasize this, we use the notation from \cref{rem::Gaussian_special_case}. 

\begin{corollary} \label{cor::marginal_independencies}
    Let $G$ be a directed graph and $\Sigma \in \mathcal{M}_G^{\leq 2}$ restricted to Gaussian distributions. If there is no equitrek between $i$ and $j$ in $G$, then 
    \[
    \Sigma_{ij} = 0. 
    \]
    In particular, $X_i \indep X_j$ holds in all distributions in $\mathcal{M}_G^{\leq 2}$.
\end{corollary}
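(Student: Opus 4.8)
The plan is to obtain this as the Gaussian specialization of the preceding lemma, but to streamline the argument by observing that in the Gaussian case only the second-order cumulant carries information. First I would recall from \cref{rem::Gaussian_special_case} that Gaussian errors force the steady-state distribution of $X_t$ to be Gaussian, so that every cumulant of order at least three vanishes identically and the distribution is fully characterized by its covariance $\Sigma$. Consequently the hypothesis of the preceding lemma — that the distribution be uniquely determined by its moments — holds automatically, since a multivariate Gaussian is moment-determinate; this confirms that the corollary sits inside the scope of the lemma, but it also means we may bypass the higher-order cumulants entirely and work directly with $\Sigma$.

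The core step is then a one-line application of the second-order trek rule of \cref{cor::trekrulefor2ndand3rd}, which expresses
\[
\Sigma_{ij} = s_{ij} = \sum_{\tau=(\tau_1,\tau_2)\in\mathcal{T}(i,j)} w^{(2)}_{\mathrm{top}(\tau)}\, a^{\tau_1}a^{\tau_2}.
\]
By assumption there is no equitrek between $i$ and $j$, i.e.\ $\mathcal{T}(i,j)=\emptyset$, so the sum is empty and $\Sigma_{ij}=0$ follows immediately. I would stress that no genericity or cancellation argument is required: the vanishing is structural, coming from the index set being empty rather than from special parameter values (indeed each would-be term is a monomial with strictly positive coefficient $w^{(2)}_{\mathrm{top}(\tau)}$, so no cancellation could occur even if treks were present).

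For the independence conclusion I would invoke the standard fact that the coordinates $X_i$ and $X_j$, being components of a single jointly Gaussian vector $X$, are independent exactly when $\Sigma_{ij}=0$; combined with the previous step this gives $X_i \indep X_j$ in every distribution of $\mathcal{M}_G^{\leq 2}$ restricted to the Gaussian case. The proof is essentially immediate once the trek rule is in hand, so there is no substantial technical obstacle; the only point demanding genuine care is the last implication, which is valid precisely because all coordinates arise from one Gaussian vector and are therefore \emph{jointly} Gaussian — pairwise uncorrelatedness would not imply independence for arbitrary marginals. I would also remark in passing that this recovers and strengthens \cite[Cor.~2.3]{varando2020graphical} in the discrete setting, as it is the Gaussian shadow of the more general cumulant statement established in the preceding lemma.
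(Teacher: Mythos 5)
Your proof is correct and follows essentially the same route as the paper: the corollary is the Gaussian specialization of the preceding lemma, and the vanishing of $\Sigma_{ij}$ is exactly the empty-sum instance of the second-order trek rule, with joint Gaussianity upgrading uncorrelatedness to independence. The only (inessential) slip is the parenthetical claim that no cancellation could occur even if treks were present — the path monomials $a^{\tau_1}a^{\tau_2}$ can be negative since entries of $A$ are unrestricted in sign — but you correctly note that the argument never relies on this, since the index set $\mathcal{T}(i,j)$ is empty.
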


Let $\hat{G} = (V,\hat{E})$ be the bidirected \emph{equitrek graph} derived from $G$ defined by including a bidirected edge between $i$ and $j$ in $\hat{E}$ if there is an equitrek between $i$ and $j$ in $G$. 
The same argument as in \cite{boege2024conditional} allows us  to describe the implied conditional independencies that hold for all distributions in the model in the Gaussian case by means of a graphical separation in $\hat{G}$ as follows.

\begin{lemma} \label{lem::implied_CIS}
    Let $G$ be a directed graph with corresponding equitrek graph $\hat{G}$, and let $I,J,K \subseteq V$ be disjoint index sets. Then
    \[
    V\setminus{(I \cup J \cup K)} \text{ separates } I \text{ and } J \text{ in } \hat{G} \quad \Rightarrow \quad X_I \indep X_J \mid X_K
    \]
    in the second-order discrete Lyapunov model $\mathcal{M}_G^{\leq 2}$ restricted to Gaussian distributions.
\end{lemma}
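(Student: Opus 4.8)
The plan is to reduce the statement to a linear-algebra fact about Gaussian covariance (bidirected) graph models, using \cref{cor::marginal_independencies} to pin down the zero pattern of the covariance matrix. By \cref{cor::marginal_independencies}, every Gaussian $\Sigma \in \mathcal{M}_G^{\leq 2}$ satisfies $\Sigma_{ij} = 0$ whenever $i$ and $j$ are non-adjacent in $\hat G$; in other words $\hat G$ is exactly a covariance graph for the model. The desired implication is then precisely the global Markov property for covariance graphs, which I would prove directly by marginalizing, exploiting a block structure, and reading off the conditional independence.

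Set $M = I \cup J \cup K$ and $L = V \setminus M$. First I would unpack the separation hypothesis: that $L$ separates $I$ and $J$ in $\hat G$ means every path from $I$ to $J$ meets $L$, hence there is no $I$--$J$ path staying inside $M$. Equivalently, in the induced subgraph $\hat G[M]$ the sets $I$ and $J$ lie in different connected components. Let $C_1,\dots,C_r$ be the connected components of $\hat G[M]$; by the previous sentence no single component $C_s$ contains both an $I$-vertex and a $J$-vertex (components may mix $I$ or $J$ with $K$, but never $I$ with $J$).

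Next I would pass to the marginal covariance $\Sigma_{MM}$. If $a$ and $b$ lie in distinct components of $\hat G[M]$, then in particular $a$ and $b$ are non-adjacent in $\hat G$, so $\Sigma_{ab} = 0$. Hence, after permuting rows and columns to group the components, $\Sigma_{MM}$ is block diagonal with one block per component. Since the inverse of a block-diagonal matrix is block diagonal with the same pattern, $(\Sigma_{MM}^{-1})_{ab} = 0$ whenever $a,b$ lie in different components; in particular $(\Sigma_{MM}^{-1})_{IJ} = 0$, because no component meets both $I$ and $J$.

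Finally I would invoke the standard Gaussian characterization: writing $\Theta = \Sigma_{MM}^{-1}$ for the concentration matrix of the marginal distribution on $M = I \cup J \cup K$, one has $X_I \indep X_J \mid X_K$ if and only if $\Theta_{IJ} = 0$, since $K = M \setminus (I \cup J)$. Combining this with the previous step yields the claim. The main point to get right — and the place I would be most careful — is the conceptual one that for covariance graphs the correct separator is the complement $L = V \setminus M$ rather than the conditioning set $K$; this is what makes marginalizing onto $M$ and reading off the block structure of $\Sigma_{MM}$ (rather than working with $\Sigma$ or $\Sigma^{-1}$ on all of $V$) the right move. The remaining steps are routine, and the argument parallels the covariance-graph reasoning of \cite{boege2024conditional}.
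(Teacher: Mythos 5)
Your argument is correct. The paper's own proof of this lemma is a one-line deferral: it points to the argument in \cite{boege2024conditional}, which rests on the connected set Markov property for bidirected graphs from \cite{drton2008binary}. What you have done is make that delegated step self-contained: starting from the pairwise zeros $\Sigma_{ab}=0$ for $a,b$ non-adjacent in $\hat{G}$ (which is exactly \cref{cor::marginal_independencies}), you derive the global Markov property directly by marginalizing to $M=I\cup J\cup K$, observing that the separation hypothesis forces $I$ and $J$ into different connected components of $\hat{G}[M]$, so that $\Sigma_{MM}$ is block diagonal, hence so is $\Sigma_{MM}^{-1}$, and then reading off $X_I\indep X_J\mid X_K$ from the vanishing of the $(I,J)$ block of the marginal concentration matrix. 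Each step checks out: distinct components of $\hat{G}[M]$ are non-adjacent in $\hat{G}$ because both endpoints lie in $M$; $\Sigma_{MM}$ is invertible because $\Sigma$ is positive definite in the model; and the concentration-matrix criterion with conditioning set $K=M\setminus(I\cup J)$ is the standard Gaussian fact. Mathematically this is the same route the cited references take (pairwise zeros plus the equivalence of pairwise and global Markov properties for Gaussian covariance graph models), so it is not a genuinely different strategy; what your version buys is that the lemma no longer depends on an external Markov-property equivalence theorem, at the cost of reproving a known fact. You are also right to flag the one conceptual point worth being careful about, namely that the relevant separator for a covariance (bidirected) graph is the complement $V\setminus(I\cup J\cup K)$ rather than $K$, which is precisely why one works with the marginal $\Sigma_{MM}$ rather than with $\Sigma^{-1}$ on all of $V$.
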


\begin{proof}
    See the argument in \cite{boege2024conditional} based on the connected set Markov property for bidirected graphs \cite{drton2008binary}.
\end{proof}

Similar to the continuous case, this statement is a special case of a setup with more general Markov processes. A general result in the case of discrete-time Markov processes is given by the second statement of \cite[Theorem 3.4]{niemiro2023local}, where it is shown that a certain graphical separation in the original graph implies a conditional independence statement holding for the corresponding stationary distributions. 
As conjectured in \cite{niemiro2023local}, this graphical separation might also be necessary for conditional independence, in the sense that if the separation does not hold, there exists a stationary distribution of a discrete-time Markov process with sparsity pattern given by the graph $G$ where the corresponding conditional independence does not hold. Similar to the approach for the continuous case in \cite{boege2024conditional}, it might be possible to construct these counterexample distributions directly in the discrete Lyapunov model as shown in the following example. 

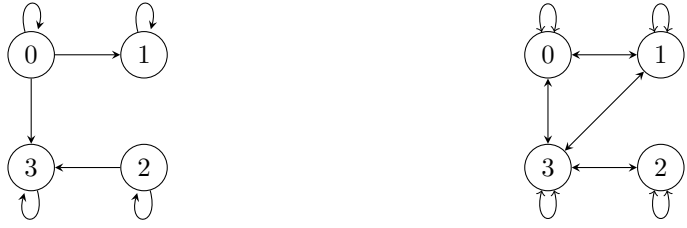
\begin{figure}
    \centering
    \begin{subfigure}[t]{0.4\textwidth}
    \centering
    \begin{tikzpicture}
        \node[circle, draw, minimum size=0.5cm] (0) at (0,1.5) {0};
        \node[circle, draw, minimum size=0.5cm] (1) at (1.5,1.5) {1};
        \node[circle, draw, minimum size=0.5cm] (2) at (1.5,0) {2};
        \node[circle, draw, minimum size=0.5cm] (3) at (0,0) {3};
        
        \draw[->,  >=stealth] (0) edge (1);
        \draw[->,  >=stealth] (0) edge (3);
        \draw[->,  >=stealth] (2) edge (3);
        \draw[->,  >=stealth] (0) edge[loop above] (0);
        \draw[->,  >=stealth] (1) edge[loop above] (1);
        \draw[->,  >=stealth] (2) edge[loop below] (2);
        \draw[->,  >=stealth] (3) edge[loop below] (3);
    \end{tikzpicture} 
    \caption{Directed graph $G$ on $p=4$ nodes including all self-loops.}
    \end{subfigure}
    \hspace{0.4cm}
    \begin{subfigure}[t]{0.4\textwidth}
    \centering
    \begin{tikzpicture}
        \node[circle, draw, minimum size=0.5cm] (0) at (0,1.5) {0};
        \node[circle, draw, minimum size=0.5cm] (1) at (1.5,1.5) {1};
        \node[circle, draw, minimum size=0.5cm] (2) at (1.5,0) {2};
        \node[circle, draw, minimum size=0.5cm] (3) at (0,0) {3};
        
        \draw[<->,  >=stealth] (0) edge (1);
        \draw[<->,  >=stealth] (0) edge (3);
        \draw[<->,  >=stealth] (2) edge (3);
        \draw[<->,  >=stealth] (1) edge (3);
        \draw[<->,  loop above] (0) to (0);
        \draw[<->,  loop above] (1) to (1);
        \draw[<->,  loop below] (2) to (2);
        \draw[<->,  loop below] (3) to (3);
    \end{tikzpicture} 
    \caption{Bidirected equitrek graph $\hat{G}$.}
    \end{subfigure}
    \caption{Directed graph with corresponding equitrek graph.}
    \label{fig::CI_example}
    \end{figure}

\begin{example}
    Consider the second-order discrete Lyapunov model $\mathcal{M}_G^{\leq2}$ restricted to Gaussian distributions for the graph on four nodes with all self-loops depicted in \cref{fig::CI_example}. Since there is no (equi)trek between $\{0,1\}$ and $\{2\}$ in $G$, we conclude from \cref{cor::marginal_independencies} that 
    \[
    X_{\{0,1\}} \indep X_2 
    \]
    holds for all distributions in $\mathcal{M}_G^{\leq2}$. Based on the resulting separations in the corresponding equitrek graph $\hat{G}$, \cref{lem::implied_CIS} further implies that 
    \[
    X_1 \indep X_2 \mid X_0 \text{ and } X_0 \indep X_2 \mid X_1 
    \]
    hold in all distributions in the model. 
    To show that no other conditional independencies hold for all distributions in the model, we need to construct a distribution in the model where, for example, 
    \[
    X_1 \indep X_3 \mid X_0
    \]
    does not hold. This is equivalent to the corresponding determinant 
    \[
    \det \Sigma_{\{1,0\}\times \{3,0\}} = \frac{a_{10} a_{30} (\omega^{(2)}_0)^2}{(1 - a_{00}^2) (1 - a_{00}a_{11}) (1 - a_{00}a_{33}) (1 - a_{11}a_{33})}
    \]
    being non-zero.
    It is clear that for almost all choices of the parameters of $A$ and $D$, this is fulfilled. Since we require the matrix $A$ to be stable, an easy choice to construct such a distribution is setting the edge weights to 1, the self-loop parameters to $\frac12$, and the error covariances to 1. Then we have $\det \Sigma_{\{1,0\}\times\{3,0\}} = \frac{256}{81}$, so $X_1 \not\!\indep X_3 \mid X_0$ in the corresponding distribution. Even though this approach works in small dimensions, a general way of constructing counterexample distributions to arbitrary conditional independence statements is required to prove the conjecture.
\end{example}

\begin{remark}
    Note that if all self-loops are present (or the graph is a DAG and at least all source nodes of the graph have a self-loop), the equitrek graph coincides with the trek graph known from the continuous setting. However, if some of the self-loops at the source nodes are missing, the equitrek graph is a sub-graph of the trek graph,
    thereby encoding additional independence statements.
\end{remark}

\section{Generic Identifiability}
\label{sec::ParameterIdentifiabilityDAGs}
In this section we study identifiability of the parameters in the discrete Lyapunov model for a known graph $G$.  
Recall that the parameters $ (A, \Omega^{(2)}, \Omega^{(3)}, \Omega^{(4)})$ of the fourth-order discrete Lyapunov model lie in the parameter space 
\begin{equation*}
    \Theta_{G, \leq 4} =  \mathbb{R}^{E}_{\text{stab}} \times \mathbb{R}_{+}^p \times (\mathbb{R} \setminus \{ 0 \})^p \times \mathbb{R}_{+} ^p.
\end{equation*}
The question of identifiability for fourth-order discrete Lyapunov models is whether the (rational) map 
$$\varphi_{G, \leq4}: \Theta_{G, \leq4} \rightarrow PD_p \times \text{Sym}^3(\mathbb{R}^p) \times \text{Sym}^{4}(\mathbb{R}^p), \quad (A, \Omega^{(2)}, \Omega^{(3)}, \Omega^{(4)}) \mapsto (S, T, R)$$
is injective,
where  $(S, T, R)$ denote the solutions to the corresponding second-, third- and fourth-order discrete Lyapunov equations. All of the identifiability results will initially be stated as identifying $A$ since it is clear by considering \eqref{eq::DiscreteLyapunovRecursive} that if $A$ can be identified from the cumulants $(S, T, R)$, then we can also identify the error cumulants.

In \cref{sec:identifiability_DAGs_self-loops} we establish generic (rational) identifiability for DAGs which have a self-loop at each vertex. In \cref{sec:identifiability_trees} we prove generic (rational) identifiability for polytrees which have self-loops at each source node. Finally, in \cref{sec:nonidentifiable_graphs} we present examples of some non-identifiable graphs.

\subsection{Identifiability for DAGs With All  Self-Loops}\label{sec:identifiability_DAGs_self-loops}
We first study generic identifiability for directed graphs $G$ that have a self-loop at each vertex and are otherwise acyclic. We refer to such graphs as \emph{DAGs with all self-loops}. Considering the definition of the VAR(1) process, the assumption of all the vertices having a self-loop is a reasonable one to make as the it corresponds to the value of the variable at time $t$ affecting its value at time $t+1$. This is why we initially consider this case.  

\begin{example} \label{ExampleIDTwoNodes}
Consider the graph $G$ in \cref{fig::TwoNodesTwoSelf-loops} on two nodes, which serves as the base case for the inductive proof of generic identifiability for DAGs with all self-loops in \cref{prop:IdAllSelfLoops}. 

\begin{figure}
    \centering
    \begin{tikzpicture}
    \node[circle, draw, minimum size=0.5cm] (0) at (0,0) {0};
    \node[circle, draw, minimum size=0.5cm] (1) at (2,0) {1};
  
    \draw[->,  >=stealth] (0) edge[loop above] (0);
    \draw[->,  >=stealth] (1) edge[loop above] (1);
    \draw[->,  >=stealth] (0) edge (1);
    \end{tikzpicture} 
    \caption{A graph on two vertices with two self-loops}
    \label{fig::TwoNodesTwoSelf-loops}
\end{figure}
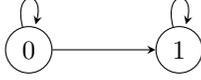
By employing the trek rule, we obtain the following equations for the second-, third- and fourth-order cumulants, denoted by $s$, $t$, and $r$ with respective subscripts, and the edge weight parameters $a_{00}, a_{10}$ and $a_{11}$:
$$\frac{s_{01}}{s_{00}} = \frac{a_{00}a_{10}}{1-a_{00}a_{11} }, \qquad \frac{t_{001}}{t_{000}} = \frac{a_{00}^2 a_{10}}{1 - a_{00}^2 a_{11} }, \qquad \frac{r_{0001}}{r_{0000}} =  \frac{a_{00}^3a_{10}}{1-a_{00}^3a_{11}}.$$
These equations can be solved uniquely for $a_{00}, a_{10}$ and $a_{11}$, yielding
\begin{align*}
    a_{00} &= \frac{-r_{0001}(s_{00}t_{001} - s_{01}t_{000})}{s_{01}(r_{0000}t_{001} - r_{0001}t_{000})}, \\
    a_{10} &= \frac{-s_{01}^2t_{001}(r_{0000}s_{01}t_{001} + r_{0001}s_{00}t_{001} - 2r_{0001}s_{01}t_{000})(r_{0000}t_{001} - r_{0001}t_{000})}{r_{0001}^2(s_{00}t_{001} - s_{01}t_{000})^3}, \\
    a_{11} &= \frac{(r_{0000}t_{001} - r_{0001}t_{000})s_{01}^2(r_{0000}s_{00}t_{001}^2 - r_{0001}s_{01}t_{000}^2)}{(s_{00}t_{001} - s_{01}t_{000})^3r_{0001}^2}.
\end{align*}

By employing the trek-rule once more, we see that the denominators above can never be zero if we assume the following: $A$ is Schur stable, $a_{00} \neq 0, a_{10} \neq 0$, and 
$\omega_{0}^{(2)} \neq 0, \omega_{0}^{(3)} \neq 0, \omega_{0}^{(4)} \neq 0$. Thus, we conclude that $G$ is generically identifiable from the second-, third- and fourth-order cumulants. 

Alternatively, it is possible to replace the fourth-order equation by another third-order equation involving $t_{011}/t_{000}$. In this case, there are two solutions, implying that $G$ is locally identifiable from the second- and third-order discrete Lyapunov equations alone. From experiments it furthermore seems to be the case that only one of these solutions yields a Schur stable matrix $A$. If this could be proven, then $A$ would also be generically identifiable from just the second-and third-order cumulants. 
\end{example}

\begin{remark}
    In the proofs of this section, we encounter the situation where we have several (potentially) different expressions for the same parameter $a_{ij}$ in terms of the cumulants of the model. However, all these equations are directly derived from the model definition and we further only obtain a single solution for $a_{ij}$ each time. Consequently, they all have to agree since otherwise, the corresponding original equation would not have been true in the model. 
\end{remark}

\begin{theorem} \label{prop:IdAllSelfLoops}
    Let $G = (V,E)$ be a DAG with $p \geq 2$ nodes that has no isolated nodes and a non-zero self-loop at each node. Consider the corresponding discrete Lyapunov model with edge weight matrix $A$. If $G$ is known (i.e., the zero pattern of $A$ is known), then $A$ is generically identifiable.
\end{theorem}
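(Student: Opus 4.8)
The plan is to prove \cref{prop:IdAllSelfLoops} by induction on the number of vertices $p$, using the two-node graph of \cref{ExampleIDTwoNodes} as the base case. I would fix a topological ordering of $G$ (ignoring self-loops), so that $A$ is lower triangular apart from its diagonal of self-loops. Since there are no equitreks between distinct connected components, \cref{prop::trekrule} shows that the cumulants factor across components and the corresponding subvectors are independent; hence it suffices to treat each component separately, and I may assume $G$ is connected with $p \geq 2$.

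For the inductive step I would peel off a sink. Let $v$ be a sink of $G$ and let $P_0 \subseteq \pa(v)$ be the set of parents whose only neighbor is $v$ (necessarily pendant sources). Set $W = \{v\} \cup P_0$ and $G' = G \setminus W$. Because $v$ is a sink and each $g \in P_0$ is a source whose sole out-neighbor is $v$, none of the removed vertices is an ancestor of a vertex of $G'$; by \cref{cor::trekrulefor2ndand3rd} the cumulant entries indexed entirely within $V(G')$ coincide with those of the discrete Lyapunov model of $G'$. Moreover $G'$ is again a DAG with all self-loops and, by the choice of $P_0$, has no isolated nodes, so the induction hypothesis identifies every entry of $A$ with both indices in $V(G')$, together with the associated error cumulants (once $A$ is known, \eqref{eq::DiscreteLyapunovRecursive} recovers the $\Omega^{(n)}$). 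It remains to identify the parameters incident to $W$: the self-loop $a_{vv}$, the incoming edge weights $a_{vj}$ for $j \in \pa(v)$, and the pendant self-loops $a_{gg}$ for $g \in P_0$.

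The key computational device is that, because $v$ is a sink, ordering $v$ last puts $A$ in the block form $A = \left(\begin{smallmatrix} B & 0 \\ c^\top & a_{vv}\end{smallmatrix}\right)$, where $B$ is the submatrix on $V \setminus v$ (known by the induction hypothesis except for the diagonal entries $a_{gg}$, $g \in P_0$) and $c$ is supported on $\pa(v)$. Substituting this into \eqref{eq::DiscreteLyapunovRecursive} at second order and reading off the block coupling $v$ to the remaining coordinates gives $(I - a_{vv} B)\,s = B S' c$, where $s$ is the observed vector of cross-covariances $s_{kv}$ and $S'$ is the observed covariance submatrix on $V \setminus v$. For $p=2$ this is exactly the relation $s_{01}/s_{00} = a_{00}a_{10}/(1 - a_{00}a_{11})$ of \cref{ExampleIDTwoNodes}. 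The analogous one-$v$-index slices of the third- and fourth-order equations have the same shape: matricizing the slice $[t_{vkl}]$ yields $(I - a_{vv}\,B\otimes B)\,\mathrm{vec}(M) = \sum_{a \in \pa(v)} c_a\,\mathrm{vec}(N^{(a)})$ with known $N^{(a)}$, and similarly $(I - a_{vv}\,B\otimes B\otimes B)$ appears at fourth order, matching the base-case denominators $1-a_{00}^2 a_{11}$ and $1-a_{00}^3 a_{11}$. Thus for each fixed candidate value of $a_{vv}$ every relation is linear in $c$; eliminating $c$ across the orders (imposing that the value forced at second order also satisfies the third- and fourth-order relations, together with the support constraint $c_k=0$ for $k\notin\pa(v)$) produces a polynomial system in the scalar $a_{vv}$. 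The pendant self-loops $a_{gg}$ are handled simultaneously through the pure and mixed cumulants of each gadget $g \to v$, exactly as in the base case, and once $a_{vv}$ and the $a_{gg}$ are pinned down the edge weights $c$ are recovered as the solution of the generically full column rank linear system $B S' c = (I - a_{vv} B)s$ restricted to the columns indexed by $\pa(v)$.

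The main obstacle is this elimination step: showing that the system in $a_{vv}$ (and the $a_{gg}$) obtained by combining the second-, third- and fourth-order relations has, generically, a \emph{unique} admissible solution rather than merely finitely many. In \cref{ExampleIDTwoNodes} this is settled by an explicit solve, which simultaneously supplies the base case and the template for the general argument. In the inductive step I expect to argue genericity abstractly: identifiability can fail only on the vanishing locus of a resultant/Jacobian polynomial, and to see that this polynomial is not identically zero it suffices to exhibit one parameter point where the combined system is uniquely solvable. A natural candidate is to degenerate all off-diagonal weights except those along a single base trek from a source into $v$, which reduces the configuration to (a product of) base cases whose unique solvability is already known, so the resultant is nonzero there and hence generically. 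Verifying that exactly one root of the eliminant produces a Schur-stable $A$ --- which \cref{ExampleIDTwoNodes} observes but leaves open for second and third order alone --- is precisely what forces the fourth-order cumulant into the statement.
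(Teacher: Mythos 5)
Your setup is sound as far as it goes: the reduction to connected components, the observation that removing a sink $v$ together with its pendant-source parents does not change the cumulants indexed inside $G'$, the block form of $A$ with $v$ last, and the derived relation $(I - a_{vv}B)\,s = BS'c$ are all correct. The genuine gap is exactly where you flag it: the elimination of $c$ to obtain a polynomial system in $a_{vv}$ and the pendant self-loops $a_{gg}$, followed by the claim that this system generically has a \emph{unique} admissible solution. Exhibiting one parameter point at which the combined system is uniquely solvable does not suffice, because for a nonlinear system the number of solutions is not constant on a Zariski-open set containing that point (compare $x^{2}=t$: one root at $t=0$, two roots generically). A resultant or Jacobian argument of the kind you sketch can only deliver \emph{local} (finite-to-one) identifiability; to reach generic identifiability you must rule out the extra roots, which \cref{ExampleIDTwoNodes} does only by an explicit solve and which you do not carry out in the inductive step. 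You also never explain how the fourth-order equations kill the spurious roots for a general sink with several parents, and your induction does not cover the case where $G' = G\setminus W$ is empty or a single vertex (e.g.\ a star of pendant sources into $v$), where the induction hypothesis gives nothing.

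The paper's proof avoids the elimination altogether by exploiting a linearity you give up. Ordering the vertices topologically and inducting on the target node $j$ (rather than peeling sinks), the recursion $s_{i_kj}=\sum_{k'\in\pa(i_k),\,l\in\pa(j)}a_{i_kk'}a_{jl}s_{k'l}$ together with the single extra equation for $t_{00j}$ is \emph{jointly linear} in all incoming edge weights of $j$, \emph{including} the self-loop $a_{jj}$ (since $j\in\pa(j)$), with a square coefficient matrix built only from already-identified entries of $A$ and observed cumulants. Invertibility of a matrix is a Zariski-open condition, so certifying it at one degenerate parameter point (all mass on a single weighted path into $j$, followed by a Schur-complement computation) immediately yields a generically unique rational solution --- no root-counting needed. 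In your formulation this linearity is destroyed because the unknown pendant self-loops $a_{gg}$ sit inside $B$ and hence enter the coefficients nonlinearly; the paper sidesteps this by identifying each source's self-loop first through a two-node sub-configuration, so that by the time the linear system for node $j$ is assembled, every coefficient is known. If you restructure your induction so that the self-loop of $v$ is carried as one of the linear unknowns and the source self-loops are resolved beforehand, your approach collapses into the paper's and the gap disappears.
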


The proof can be found in \cref{sec:proofsParameterIdentifiabilty}. Once we have identified the matrix $A$, we can easily identify all cumulants of the noise, for instance by using the recursive equation~\eqref{eq::DiscreteLyapunovRecursive}.

\begin{remark}\label{rmk:isolatednode}
A graph which has an isolated node with a self-loop is neither generically nor locally identifiable using any number of cumulants since then the isolated node would itself have to be generically/locally identifiable.
This, however, is not possible since there will always be one more parameter than available equations.
\end{remark}

\begin{corollary} Let $G = (V,E)$ be a DAG with $p \geq 2$ nodes that has no isolated nodes and a non-zero self-loop at each node. Consider the corresponding discrete Lyapunov model with edge weight matrix $A$. If $G$ is known (i.e., the zero pattern of $A$ is known), then $A$, $\Omega^{(2)}$, $\Omega^{(3)}$, and $\Omega^{(4)}$ are generically identifiable.
\end{corollary}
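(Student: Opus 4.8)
The plan is to bootstrap directly from \cref{prop:IdAllSelfLoops}, which already establishes that $A$ is generically identifiable under exactly these hypotheses (a DAG with $p \geq 2$ nodes, no isolated node, and a non-zero self-loop at each vertex). The only additional work is to recover the diagonal noise cumulants $\Omega^{(2)}, \Omega^{(3)}, \Omega^{(4)}$ once $A$ has been pinned down, and to confirm that this recovery does not force us to shrink the generic locus any further. This is precisely the strategy flagged in the remark following \cref{prop:IdAllSelfLoops}.

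By \cref{prop:IdAllSelfLoops}, there is a proper algebraic subset $B \subsetneq \Theta_{G,\leq 4}$ such that for every $\theta = (A, \Omega^{(2)}, \Omega^{(3)}, \Omega^{(4)}) \notin B$, any preimage $\tilde\theta = (\tilde A, \tilde\Omega^{(2)}, \tilde\Omega^{(3)}, \tilde\Omega^{(4)})$ with $\varphi_{G,\leq 4}(\tilde\theta) = \varphi_{G,\leq 4}(\theta)$ satisfies $\tilde A = A$. First I would fix such a generic $\theta$ and an arbitrary preimage $\tilde\theta$, so that the cumulants $(S,T,R) = \varphi_{G,\leq 4}(\theta) = \varphi_{G,\leq 4}(\tilde\theta)$ coincide and $\tilde A = A$. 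The key step is then to invert the recursive discrete Lyapunov equation \eqref{eq::DiscreteLyapunovRecursive} in the noise cumulant: for each $n \in \{2,3,4\}$, the corresponding cumulant $T_n \in \{S,T,R\}$ satisfies $T_n = T_n \times_1 A \times_2 A \cdots \times_n A + \Omega^{(n)}$, which I would rearrange to
\[
    \Omega^{(n)} = T_n - T_n \times_1 A \times_2 A \cdots \times_n A .
\]
This exhibits $\Omega^{(n)}$ as an explicit polynomial in the entries of $A$ and of $T_n$. Since $\tilde A = A$ and $T_n$ is shared by $\theta$ and $\tilde\theta$, the same formula applied to $\tilde\theta$ forces $\tilde\Omega^{(n)} = \Omega^{(n)}$ for every $n \in \{2,3,4\}$. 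Hence $\tilde\theta = \theta$, so $\varphi_{G,\leq 4}$ is injective on $\Theta_{G,\leq 4}\setminus B$, which is exactly generic identifiability of the full parameter tuple.

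I do not expect a genuine obstacle here, since all of the difficulty is absorbed into \cref{prop:IdAllSelfLoops}. The map $(A, T_n) \mapsto \Omega^{(n)}$ is single-valued and polynomial, so it introduces no new branching or exceptional behaviour, and the exceptional set $B$ coming from the theorem already suffices; in particular this also shows the model is generically \emph{rationally} identifiable, as $A$ is recovered rationally and then each $\Omega^{(n)}$ is recovered polynomially. The only point worth remarking is that the recovered $\tilde\Omega^{(n)}$ automatically lies in the correct slice of the parameter space $\Theta_{G,\leq 4}$ (diagonal, with the positivity and nonvanishing constraints built into the definition of $\Theta_{G,\leq 4}$), because it agrees with the genuine noise cumulant $\Omega^{(n)}$ of the fixed $\theta$; no separate verification of these constraints is required.
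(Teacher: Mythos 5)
Your proposal is correct and follows exactly the route the paper takes: it invokes \cref{prop:IdAllSelfLoops} for generic identifiability of $A$ and then recovers each $\Omega^{(n)}$ from the recursive discrete Lyapunov equation \eqref{eq::DiscreteLyapunovRecursive}, which is precisely the justification the paper gives in the remark immediately following that theorem. No gaps.
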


\subsection{Other Types of DAGs}\label{sec:identifiability_trees}
We now consider graphs which might not have all self-loops present but are otherwise still acyclic. For a variable to not have a self-loop implies that its value at time $t$ does not affect it at time $t+1$, this could for example be the case of the time jumps are big compared to the speed of the process.

\begin{example} \label{ExampleIDTwoNodesOneLoop}
    Consider the graph on two nodes that has a directed edge from the source, $0$, to the other node, $1$, but only the source has a self-loop, as shown in \cref{fig::TwoNodes1self-loop}. 
By employing the trek rule, as well as directly considering the discrete Lyapunov equations, we obtain the following equations for the second- and third-order cumulants:
\begin{align*}
    &s_{00} = w_0^{(2)} \frac{1}{1-a_{00}^2},  &&t_{000} = w_0^{(3)} \frac{1}{1-a_{00}^3}, \\
    &s_{01} = a_{00}a_{10} s_{00}, &&t_{001} = a_{00}^2a_{10} t_{000}, \\
    &s_{11} = a_{10}^2 s_{00} + w_1^{(2)}, &&t_{111} = a_{10}^3 t_{000} + w_1^{(3)}.
\end{align*}
The third-order cumulant entry $t_{011}$ follows a similar pattern, but is not required for the following computation.
Combining these equations yields
\begin{align*}
    a_{00} &= \frac{s_{00} t_{001}}{s_{01} t_{000}}, 
    && \ a_{10} = \frac{s_{01}}{s_{00} a_{00}} = \frac{s_{01}^2 t_{000}}{s_{00}^2 t_{001}}, \\
    w^{(2)}_0 &= s_{00}(1-a_{00}^2) = \frac{s_{00} s_{01}^2 t_{000}^2 - s_{00}^3 t_{001}^2}{s_{01}^2 t_{000}^2}, 
    &&w^{(2)}_1 = s_{11} - a_{10}^2 s_{00} = \frac{s_{00}^3s_{11}t_{001}^2 - s_{01}^4 t_{000}^2}{s_{00}^4 t_{001}^2}, \\
    w^{(3)}_0 &= t_{000}(1-a_{00}^3) = \frac{s_{00} s_{01}^3 t_{000}^3 - s_{00}^4 t_{001}^3}{s_{01}^3 t_{000}^3}, 
    &&w^{(3)}_1 = t_{111} - a_{10}^3 t_{000} = \frac{t_{001}^3 t_{111} s_{00}^6 - s_{01}^6t_{000}^4}{s_{00}^6 t_{001}^3}.
\end{align*}
\end{example}

This example serves as the base case for the following theorem.

\begin{theorem}
\label{thm::GenericIDGeneral}
    Let $G = (V, E)$ be a polytree with $p \geq 2$ nodes that has no isolated nodes and non-zero self-loops at all sources. Consider the corresponding discrete Lyapunov model with edge weight matrix $A$. If $G$ is known (i.e., the zero pattern of $A$ is known), then $A$ is generically identifiable.
\end{theorem}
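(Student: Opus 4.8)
The plan is to prove \cref{thm::GenericIDGeneral} by induction on the number of nodes $p$, processing the vertices in a topological order and identifying, at each step, a single incoming edge weight together with the self-loop parameter of its head (when one is present). The two base cases are exactly \cref{ExampleIDTwoNodes} (head carrying a self-loop, using cumulants up to order four) and \cref{ExampleIDTwoNodesOneLoop} (head without a self-loop, using only second- and third-order cumulants), which between them cover every no-isolated-node polytree on two vertices. Because distinct connected components of a polytree share no common source, there is no equitrek between nodes in different components, so all mixed cumulants vanish and the cumulant tensors block-decompose; identifiability of $G$ therefore reduces to that of each component, and we may assume $G$ is a connected polytree. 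This mirrors the inductive scheme behind \cref{prop:IdAllSelfLoops}, with the twist that heads need not carry self-loops.

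The key structural ingredient is the following consequence of the polytree hypothesis and the trek rule (\cref{prop::trekrule}, \cref{cor::trekrulefor2ndand3rd}). Fix a node $v$ and a source ancestor $r$ of $v$; such an $r$ exists since $v$ lies in a finite DAG, and it carries a non-zero self-loop by assumption. As $r$ is a source, $\mathrm{An}(r)=\{r\}$, so the unique common ancestor of $r$ and $v$ is $r$ itself. Hence every equitrek contributing to $s_{rv}$, $t_{rrv}$, $r_{rrrv}$ has top node $r$, and its leg to $v$ is forced to traverse the unique directed path $P\colon r\to\cdots\to u\to v$ (path-uniqueness in a polytree) with self-loops inserted along $P$. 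These entries are thus governed solely by the edge weights and self-loops lying on $P$ and are given by the directed-path specialization of the trek rule, with the source noise cumulant $\omega_r^{(\bullet)}$ factoring out and cancelling in the ratios $s_{rv}/s_{rr}$, $t_{rrv}/t_{rrr}$, $r_{rrrv}/r_{rrrr}$.

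With this in hand I would carry out the inductive step as follows. Assume all parameters attached to the strict ancestors of $v$ are already identified. Choosing a source ancestor $r$ of $v$ together with the parent $u$ on the path $P$, the entire stretch $r\to\cdots\to u$ is known, so in the three ratios above the only remaining unknowns are the new edge weight $a_{vu}$ and, if present, the self-loop $a_{vv}$. By the structural lemma these ratios take precisely the shape seen in the base cases, namely $a_{vu}$ times a known rational factor in $a_{vv}$: one solves for $a_{vu}$ alone when $v$ has no self-loop (two equations, orders two and three, as in \cref{ExampleIDTwoNodesOneLoop}) and for the pair $(a_{vu},a_{vv})$ when it does (up to order four, as in \cref{ExampleIDTwoNodes}). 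In fact the deep-node system is strictly easier than the base case, since there the source self-loop was also unknown, whereas here the whole path above $u$ is fixed. If $v$ is a collider with several parents, each incoming edge is recovered from its own parent's source path, and the values of $a_{vv}$ obtained from different parents must agree, these being genuine model quantities (cf.\ the remark following \cref{ExampleIDTwoNodes}).

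For genericity, each solution step divides by polynomial expressions in the cumulants, and I would use the trek rule once more to verify that these denominators vanish only when $A$ fails to be Schur stable, some edge weight along the relevant path is zero, or some source noise cumulant $\omega_r^{(2)},\omega_r^{(3)},\omega_r^{(4)}$ vanishes; each is a proper algebraic condition, so identifiability holds off a proper algebraic subset of $\Theta_{G,\le 4}$. The main obstacle I anticipate is not the topological recursion, which becomes routine once the structural lemma is established, but pinning down $a_{vv}$ at nodes that carry a self-loop: one must show that the single equation in $a_{vv}$ obtained by eliminating $a_{vu}$ from the ratio identities has a unique admissible (Schur-stable) root generically. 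This is exactly where the fourth-order cumulant enters and where the explicit form of the directed-path trek rule, in particular the behaviour of its $(1-t^{\,k})$-type denominators, must be tracked carefully.
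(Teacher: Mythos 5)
Your proposal is correct and follows essentially the same route as the paper's proof: a topological-order induction in which each parent of a node $v$ is assigned its own source ancestor (necessarily distinct across parents, by the polytree property), so that the incoming edges decouple and are identified one at a time, with an extra higher-order cumulant equation supplying the self-loop at $v$ when present. The only real difference is packaging — the paper solves a diagonal-plus-Schur-complement linear system built from the recursive Lyapunov equations, whereas you use closed-form source-anchored trek ratios — and your anticipated obstacle at self-loop nodes in fact dissolves: once the ancestral path (including $a_{rr}$) is known, eliminating $a_{vu}$ from the order-two and order-three ratios leaves a degree-one (M\"obius) equation in $a_{vv}$ with a unique generic root, so the fourth-order cumulant is only needed in the base case where the source's own self-loop is still unknown.
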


The proof can be found in \cref{sec:proofsParameterIdentifiabilty}. As in \cref{prop:IdAllSelfLoops}, it is easy to identify the cumulants of the noise once $A$ is identified. 

\begin{corollary} Let $G = (V,E)$ be a polytree with $p \geq 2$ nodes that has no isolated nodes and has a non-zero self-loop at all the sources. Consider the corresponding discrete Lyapunov model with edge weight matrix $A$. If $G$ is known (i.e. the zero pattern of $A$ is known), then $A$, $\Omega^{(2)}$, $\Omega^{(3)}$, and $\Omega^{(4)}$ are generically identifiable.
\end{corollary}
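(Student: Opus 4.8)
The plan is to reduce this corollary to \cref{thm::GenericIDGeneral} by means of the recursive form of the discrete Lyapunov equation. \cref{thm::GenericIDGeneral} already guarantees that the edge-weight matrix $A$ is generically identifiable from the cumulants $(S,T,R)$; inspecting its proof—which proceeds inductively from the base case of \cref{ExampleIDTwoNodesOneLoop}, where each entry $a_{ij}$ is written explicitly as a ratio of polynomials in the cumulant entries—shows that the recovery of $A$ is in fact rational. Hence there is a proper algebraic subset $B \subseteq \Theta_{G,\le 4}$ outside of which $A$ is a rational function of $(S,T,R)$.

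The key remaining step is to invert the noise cumulants using the recursion \eqref{eq::DiscreteLyapunovRecursive}. For each order $m \in \{2,3,4\}$ this equation rearranges to
\[
\Omega^{(m)} = T_m - T_m \times_1 A \times_2 A \cdots \times_m A,
\]
where $T_2 = S$, $T_3 = T$, and $T_4 = R$. Since the right-hand side is a polynomial in the entries of $A$ and of the observed cumulant $T_m$, and $A$ is already a rational function of $(S,T,R)$ on $\Theta_{G,\le 4} \setminus B$, each $\Omega^{(m)}$ is determined uniquely and is itself a rational function of $(S,T,R)$. Reading off the diagonal entries—which is all that is needed, as $\Omega^{(m)}$ is diagonal for parameters in the model—yields the noise parameters $w^{(m)}_0,\dots,w^{(m)}_{p-1}$. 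Composing the two steps exhibits the inverse of $\varphi_{G,\le 4}$ as a rational map on $\Theta_{G,\le 4}\setminus B$, so the full tuple $(A,\Omega^{(2)},\Omega^{(3)},\Omega^{(4)})$ is generically (rationally) identifiable. Since $\overline{\Theta_{G,\le 4}}$ is irreducible (the parameter set is a nonempty Euclidean-open subset, as noted after \cref{def::TypesofID}) and $B$ is a proper algebraic subset, this matches \cref{def::TypesofID}(ii).

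The one point requiring care—and essentially the only place where anything could go wrong, since the genuine difficulty has already been absorbed into \cref{thm::GenericIDGeneral}—is ensuring that substituting the rational expressions for $A$ into the recursion does not enlarge the exceptional locus beyond a proper algebraic set. Concretely, the denominators appearing in the formulas for $A$ must not vanish identically on $\Theta_{G,\le 4}$, and the denominator of each resulting $\Omega^{(m)}$ must likewise be a nonzero polynomial. Both follow from the non-vanishing of the relevant denominators on a dense subset already certified in the identifiability argument; this is exactly the role played by the trek-rule verification of non-vanishing denominators in \cref{ExampleIDTwoNodes} and \cref{ExampleIDTwoNodesOneLoop}. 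The union of these finitely many proper algebraic conditions is again a proper algebraic subset of the irreducible variety $\overline{\Theta_{G,\le 4}}$, which completes the argument.
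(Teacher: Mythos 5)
Your proposal is correct and follows essentially the same route as the paper, which states that once $A$ is identified via \cref{thm::GenericIDGeneral}, the noise cumulants are recovered from the recursive equation~\eqref{eq::DiscreteLyapunovRecursive}. Your additional care about the exceptional locus remaining a proper algebraic subset is a reasonable elaboration of the same argument rather than a different approach.
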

 
\begin{remark} 
\label{rmk:: onIdConditions}
The motivation behind \cref{thm::GenericIDGeneral} was to explore identifiability results in less restrictive cases than those in \cref{prop:IdAllSelfLoops}. One natural generalization is to consider graphs where some self-loops are missing. Note, that sources must still have self-loops, see \cref{ex:: nonId 1}. However, merely requiring this is not sufficient to guarantee identifiability, see \cref{ex:: nonId 4}. 

In the proofs of \cref{thm::GenericIDGeneral} and \cref{prop:IdAllSelfLoops}, we rely on the following: in the induction step, for each vertex, we are able to find linearly independent equations that identify the incoming edges. Having all the self-loops in a graph, or assuming the graph is a tree, ensures this property. However, there are alternative ways to formulate this condition.  

For example, the following statement could be proved using the same technique: Let $G=(V, E)$ be a DAG with $p\geq 2$ nodes, no isolated nodes, and self-loops at all sources. Further, for each non-source node $i$ with parents $\text{pa}(i) = \{v_1, \ldots, v_d\}$, suppose there exist $d$ nodes $\{u_1, \ldots, u_d\}$ that appear before $i$ in the topological order, such that $u_i$ is an ancestor of $v_i$ (possibly equal if $v_i$ has a self-loop), and $u_i$ is not an ancestor of $v_j$ for $i\neq j$, and $u_i\neq v_j$ for $j\neq i$. Consider the corresponding discrete Lyapunov model with edge weight matrix $A$. If $G$ is known, then $A$ is generically identifiable. In the proof, one would need to consider equations for $s_{iu_1}, \ldots, s_{i, u_d}$ to identify the edges $a_{iv_1}, \ldots, a_{iv_d}$. 

Finally, note that there exist graphs not satisfying the conditions above that are still identifiable, see \cref{ex:: exceptionId}. Thus, a natural direction for future research is to completely characterize the class of generically identifiable graphs.
\end{remark}

\begin{example} 
\label{ex:: exceptionId}
The graph in \cref{fig::GenericIDgraphNotsatisfyingPrevThm} does not satisfy the conditions from \cref{prop:IdAllSelfLoops}, \cref{thm::GenericIDGeneral}, or \cref{rmk:: onIdConditions}. Nevertheless, the parameters can still be generically identified using $s_{04}, t_{004}, s_{03}, s_{43}$.

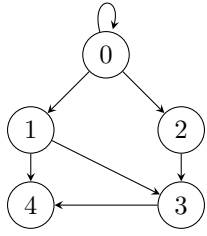
\begin{figure}
    \centering
    \begin{tikzpicture}
      \node[circle, draw, minimum size=0.5cm] (0) at (0,0) {0};
      \node[circle, draw, minimum size=0.5cm] (1) at (-1,-1) {1};
      \node[circle, draw, minimum size=0.5cm] (2) at (1,-1) {2};
      \node[circle, draw, minimum size=0.5cm] (4) at (-1,-2) {4};
      \node[circle, draw, minimum size=0.5cm] (3) at (1,-2) {3};
      
      \draw[->,  >=stealth] (0) edge[loop above] (0);
      \draw[->,  >=stealth] (0) edge (1);
      \draw[->,  >=stealth] (0) edge (2);
      \draw[->,  >=stealth] (1) edge (4);
      \draw[->,  >=stealth] (1) edge (3);
      \draw[->,  >=stealth] (2) edge (3);
      \draw[->,  >=stealth] (3) edge (4);
    \end{tikzpicture} 
    \caption{A generically identifiable graph.}
    \label{fig::GenericIDgraphNotsatisfyingPrevThm}
\end{figure}
\end{example}

\subsection{Nonidentifiable Graphs}\label{sec:nonidentifiable_graphs}

In this subsection, we present examples of small graphs that are not identifiable.

\subsubsection{Examples on Two Nodes}

We consider graphs on two nodes with various edge configurations, including graphs in which not all self-loops are present (in particular, the self-loop at the source) and graphs that contain a cycle, as shown in \cref{fig::non_id_2node_graphs}.

\begin{figure}[H]
    \centering
    \begin{subfigure}[t]{0.4\textwidth}
    \centering
    \begin{tikzpicture}
    \node[circle, draw, minimum size=0.5cm] (0) at (0,0) {0};
    \node[circle, draw, minimum size=0.5cm] (1) at (2,0) {1};
  
    \draw[->,  >=stealth] (1) edge[loop above] (1);
    \draw[->,  >=stealth] (0) edge (1);
    \end{tikzpicture} 
    \caption{A directed graph on two vertices with a single self-loop at the sink.}
    \label{fig::TwoNodes1self-loopsink}
    \end{subfigure}
    \hspace{1cm}
    ~ 
    \begin{subfigure}[t]{0.4\textwidth}
    \centering
    \begin{tikzpicture}
    \node[circle, draw, minimum size=0.5cm] (0) at (0,0) {0};
    \node[circle, draw, minimum size=0.5cm] (1) at (2,0) {1};
    
    \draw[->,  >=stealth] (1) edge[bend left] (0);
    \draw[->,  >=stealth] (0) edge[bend left] (1);
    \end{tikzpicture} 
    \caption{A cyclic directed graph on two vertices with no self-loops.}     \label{fig::CyclicGraph2nodes}
    \end{subfigure}
    \caption{Non-identifiable directed graphs on two nodes.}
    \label{fig::non_id_2node_graphs}
\end{figure}
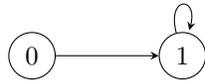
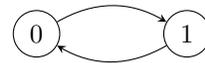

\begin{example}
\label{ex:: nonId 1}
    We consider the graph in \cref{fig::TwoNodes1self-loopsink}. 
    The unknown parameters are $a_{10}, a_{11}$, and $w^{(2)}_0$, $w^{(2)}_1, \dots, w^{(n)}_0, w^{(n)}_1$, when considering cumulants up to order $n$. We obtain the following equations:
    \begin{align*}
        &s_{00} = w^{(2)}_0, 
        &&t_{000} = w^{(3)}_0, 
        && 
        && q_{0\dots 00} = w^{(n)}_0, \\
        & 
        &&t_{001} = 0, 
        && 
        && q_{0\dots 01} = 0, \\
        &s_{01} = 0, 
        && 
        && \cdots 
        && \vdots \\
        & 
        &&t_{011} = 0, 
        &&
        && q_{01 \dots 1} = 0, \\
        &s_{11} = \frac{w^{(2)}_0 a_{10}^2 + w^{(2)}_1}{1-a_{11}^2}, 
        &&t_{111} = \frac{w^{(3)}_0 a_{10}^3 + w^{(3)}_1}{1-a_{11}^3}, 
        && 
        && q_{11 \dots 1} = \frac{w^{(n)}_0 a_{10}^n + w^{(n)}_1}{1-a_{11}^n}, 
    \end{align*}
    where the last equations describe the $n$th order cumulants. Note that there are no equitreks between 0 and 1, so the respective cumulant entries are zero. Consequently, there are $2(n-1)$ equations and $2(n-1) + 2$ unknowns. Therefore, the parameters cannot be identified using these equations for any~$n$. The same issue arises if the graph has one edge and no self-loops.
\end{example}

\begin{example}
    We consider the cyclic graph with two edges in \cref{fig::CyclicGraph2nodes}. 
    The unknown parameters are $a_{10}, a_{01}$, and $w^{(2)}_0, w^{(2)}_1, \dots, w^{(n)}_0, w^{(n)}_1$, when considering cumulants up to order $n$. 
    We obtain the following equations:
    \begin{align*}
        &s_{00} = \frac{w^{(2)}_0 + w^{(2)}_1 a_{01}^2}{1-a_{10}^2a_{01}^2}, 
        &&t_{000} = \frac{w^{(3)}_0 + w^{(3)}_1 a_{01}^3}{1-a_{10}^3a_{01}^3}, 
        && \dots
        &&q_{0\dots 0} = \frac{w^{(n)}_0 + w^{(n)}_1 a_{01}^n}{1-a_{10}^na_{01}^n}, \\
        &s_{11} = \frac{w^{(2)}_0 a_{10}^2 + w^{(2)}_1}{1-a_{10}^2a_{01}^2},
        &&t_{111} = \frac{w^{(3)}_0 a_{10}^3 + w^{(3)}_1}{1-a_{10}^3a_{01}^3},
        && \dots
        &&q_{1 \dots 1} = \frac{w^{(n)}_0 a_{10}^n + w^{(n)}_1}{1-a_{10}^na_{01}^n},
    \end{align*}
    where the last equations describe the $n$th order cumulants. Again, there are no equitreks between 0 and 1, so the respective cumulant entries are zero. Consequently, there are again $2(n-1)$ equations and $2(n-1) + 2$ unknowns, making this graph non-identifiable as well.
\end{example}

\begin{remark}
    These examples illustrate a more general observation: The number of $n$th order cumulants of a $p$-dimensional random vector is $\binom{p+n-1}{n}$, so in the $n$th order discrete Lyapunov model, we have at most $\sum_{i=2}^n\binom{i+p-1}{i} = \binom{p+n}{n}-(p+1)$ equations. Thus, if the model is generically identifiable, we have that $|E| + p(n-1) \leq \binom{p+n}{n}-(p+1)$. This bound can be further tightened by subtracting the number of zero cumulants due to missing $i$-equitreks for $i =2,\dots,n$ in the graph from the right-hand side (see, e.g., \cite{dettling2023identifiability}). In the examples above, this yields $|E| + 2(n-1) \leq 2(n-1)$. If there are no or only few equitreks missing, however, this bound does not present a notable restriction.
\end{remark}

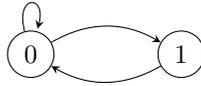
\begin{figure}[H]
    \centering
     \begin{tikzpicture}
      \node[circle, draw, minimum size=0.5cm] (0) at (0,0) {0};
      \node[circle, draw, minimum size=0.5cm] (1) at (2,0) {1};
      
      \draw[->,  >=stealth] (1) edge[bend left] (0);
      \draw[->,  >=stealth] (0) edge[bend left] (1);
      \draw[->,  >=stealth] (0) edge[loop above] (0);
    \end{tikzpicture} 
    \caption{A cyclic graph on two vertices with one self-loop.}
    \label{fig::CyclicTwoNodesThreeEdges}
\end{figure}

\begin{example}
    We consider the cyclic graph with three edges in \cref{fig::CyclicTwoNodesThreeEdges}. 
    The unknown parameters are $a_{00}, a_{10}, a_{01}$, and $w^{(2)}_0, w^{(2)}_1, \dots, w^{(n)}_0, w^{(n)}_1$, when considering the cumulants up to order~$n$. For this edge configuration, the parameters are locally identifiable from second- and third-order cumulants. 
\end{example}

\subsubsection{Diamond-type Graphs}
There are also examples of non-identifiable graphs that do not arise from a lack of equations compared to the number of parameters.
\begin{example}
\label{ex:: nonId 4}
We consider the graph on four vertices shown in \cref{fig::DiamondGraph}.
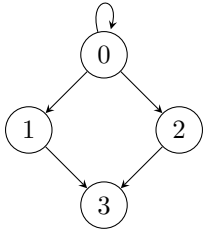
\begin{figure}[H]
    \centering
       \begin{tikzpicture}
      \node[circle, draw, minimum size=0.5cm] (0) at (2,3) {0};
      \node[circle, draw, minimum size=0.5cm] (1) at (1,2) {1};
      \node[circle, draw, minimum size=0.5cm] (2) at (3,2) {2};
      \node[circle, draw, minimum size=0.5cm] (3) at (2,1) {3};
      
      \draw[->,  >=stealth] (0) edge (1);
      \draw[->,  >=stealth] (0) edge (2);
      \draw[->,  >=stealth] (1) edge (3);
      \draw[->,  >=stealth] (2) edge (3);
      \draw[->,  >=stealth] (0) edge[loop above] (0);
    \end{tikzpicture} 
    \caption{A diamond-type graph.}
    \label{fig::DiamondGraph}
\end{figure}
Since the subgraph induced by $0,1,2$ satisfies the conditions of \cref{thm::GenericIDGeneral}, the parameters $a_{00}$, $a_{10}$ and $a_{20}$ are identifiable from the second-, third- and fourth-order cumulants (see \cref{ExampleIDTwoNodesOneLoop}). We also have
\[
s_{03} = w_0^{(2)} \frac{a_{10}a_{31} +a_{20}a_{32}}{1-a_{00}^2}.
\]
Hence, the quantity $y = a_{10}a_{31} +a_{20}a_{32}$ is identifiable from these cumulants. Note that this is a linear combination of the two unknown values $a_{31}$ and $a_{32}$ with known coefficients. However, we claim that $a_{31}$ and $a_{32}$ are not identifiable from any cumulants, regardless of how high the order is allowed to be. 

Let $q_{0^{k_0} 1^{k_1} 2^{k_2} 3^{k_3}}$ denote the $i$th order cumulant with $k_j$ copies of $j$ in its subscript for $j = 0,\dots,3$, where $i = k_0 + \cdots + k_3$.
Note that $a_{31}$ or $a_{32}$ appears in the trek parametrization of this cumulant if and only if $k_3 \neq 0$. In this case, for the off-diagonal cumulants (i.e., those with $k_3 \neq i$), we have
\[
q_{0^{k_0} 1^{k_1} 2^{k_2} 3^{k_3}} = w_0^{(i)} \frac{a_{00}^{2k_0}(a_{00}a_{10})^{k_1}(a_{00}a_{20})^{k_2}y^{k_3}}{1-a_{00}^{i}}.
\]
All $i$th order cumulants with $k_3 = 0$ are rational functions only of $a_{00}, a_{10}$ and $a_{20}$.

Let $A$ be any parameter matrix consistent with the prescribed cumulants of all orders, and let $\hat{A}$ be any parameter matrix which satisfies
$$\hat{a}_{00} = a_{00}, \qquad \hat{a}_{10} = a_{20}, \qquad \hat{a}_{20} = a_{20}, \qquad a_{10}\hat{a}_{31} +a_{20}\hat{a}_{32} = a_{10}a_{31} +a_{20}a_{32}.$$
Then $\hat{A}$ has the same prescribed cumulants as $A$. Therefore, there is a line of parameters that give the same cumulants, yielding this model non-identifiable.
Note that in the context of the proof of \cref{prop:IdAllSelfLoops}, this property results in the product of  submatrices of the $A$ and $S$ in \eqref{eq:DAGSubmatrices} having rank~1.
\end{example}

\section{Local Identifiability Using the Jacobian}\label{sec:local identifiability}
If the Jacobian of the parametrization has full rank, then the parametrization is locally identifiable \cite[Section 16.1]{sullivant2018algebraic}.
In this section, we derive an expression for the Jacobian of both the second- and third-order cumulants of the discrete Lyapunov model. We show that, for any graph where all connected components have at least size three and all self-loops are present, the joint second- and third-order Jacobian has full rank. This proves that 
all discrete Lyapunov models for such a graph are locally identifiable. By further including the Jacobian of the fourth-order cumulant we can prove that any graph with all self-loops and no isolated nodes are locally identifiable.  

As was seen in \cref{ExampleIDTwoNodes}, the graph on two nodes with both self-loops and an edge from $0$ to $2$ is generically identifiable from second-, third-, and fourth-order cumulants. However, it is already two-to-one identifiable (so locally identifiable) from the second- and third-order cumulants alone. It is this phenomenon which we will generalize in the section.

Fix a directed graph $G$ on $p$ vertices labeled from 0 to $p-1$. Let $\varphi_{G, \leq 3}$ be the map which sends a tuple of parameters $(A,\omegatwo,\omegathree)$ to the corresponding second- and third-order cumulants $(S,T)$ under the discrete Lyapunov model specified by $G$, so the third-order version of $\varphi_{G, \leq 4}$ from ~\cref{sec::ParameterIdentifiabilityDAGs}. Let $\mathrm{Jac}^G(S,T)$  denote the Jacobian of $\phi_{G, \leq 3}$.

We derive the following expression for the column of the Jacobian corresponding to the parameter $a_{\beta\alpha}$ using the Kronecker parametrization, equation \eqref{eq::discreteLyapunovVec}. By applying standard rules for differentiating a matrix inverse, which can be found for example in \cite[Section~2.2]{petersen2008matrix}, we obtain
\begin{align}
\label{eq::JacOfSigmawrtA}
    \text{vec} \left( \frac{\partial S}{\partial a_{\beta\alpha}} \right) &= -(I- A \otimes A)^{-1} \frac{\partial (I - A \otimes A) }{\partial a_{\beta\alpha}} (I- A \otimes A)^{-1} \text{vec}(\Omega^{(2)}) \nonumber \\
   &= (I- A \otimes A)^{-1} (E_{\beta\alpha} \otimes A + A \otimes E_{\beta\alpha}) (I- A \otimes A)^{-1} \text{vec}(\Omega^{(2)}), 
\end{align}
where $E_{\beta\alpha}$ is the matrix that is $1$ at the $(\beta,\alpha)$th position and 0 elsewhere. Note that the product of the last two matrices in~\eqref{eq::JacOfSigmawrtA} is exactly $\mathrm{vec}(S)$ as given by the Kronecker product parametrization. 

The column corresponding to the parameter $\omega^{(2)}_{i}$ is
\begin{align*}
    \vec \left( \frac{\partial S}{\partial \omega^{(2)}_{i}} \right)=(I - A \otimes A)^{-1} \frac{\partial \text{vec}(\Omega^{(2)})}{\partial \omega^{(2)}_{i}}  =(I - A \otimes A)^{-1}\text{vec}(E_{ii}).
\end{align*}

Similarly, we derive the following expression for the Jacobian of the third-order cumulant with respect to the entries of $A$:
\begin{align}
    \label{eq::JacofTwrtA}
   & \vec \left( \frac{\partial T}{\partial a_{\beta\alpha}} \right) \nonumber  = -(I- A \otimes A \otimes A)^{-1} \frac{\partial (I - A \otimes A \otimes A) }{\partial a_{\beta\alpha}} (I- A \otimes A \otimes A)^{-1} \text{vec}(\Omega^{(3)}) \nonumber \\
   &= (I- A \otimes A \otimes A)^{-1} (E_{\beta\alpha} \otimes A \otimes A + A \otimes E_{\beta\alpha} \otimes A + A \otimes A \otimes E_{\beta\alpha}) (I- A \otimes A \otimes A)^{-1} \text{vec}(\Omega^{(3)}),
\end{align}
and with respect to the $\omega^{(3)}_i$'s
\begin{align*}
    \vec \left( \frac{\partial T}{\partial \omega^{(3)}_{i}} \right)=(I - A \otimes A \otimes A)^{-1} \frac{\partial \text{vec}(\Omega^{(3)})}{\partial \omega^{(3)}_{i}} =  (I - A \otimes A \otimes A)^{-1}\text{vec}(E_{iii}). 
\end{align*}

From the above derivative computations, we observe that $J(S,T)$ can be written as a product of block matrices:
\begin{align}\label{eq::DecompOfJacobian}
\mathrm{Jac}^G(S,T) = \begin{pmatrix}
    (I - A \otimes A)^{-1} & 0 \\
    0 & (I - A \otimes A \otimes A)^{-1}
\end{pmatrix}
\begin{pmatrix}
    J^G_2(S,a) & J^G_2(S,\omega^{(2)}) & 0 \\
    J^G_3(T,a) & 0 & J^G_3(T,\omega^{(3)}) 
\end{pmatrix}.
\end{align}
We define the blocks of this second matrix as follows.
The matrix $J^G_2(S,a)$ is a $\binom{p+1}{2} \times \#E$ matrix whose $\beta\alpha$-th column (i.e., the column corresponding to the edge $\alpha \to \beta$) is 
\[(E_{\beta\alpha} \otimes A + A \otimes E_{\beta\alpha}) (I- A \otimes A)^{-1} \text{vec}(\Omega^{(2)}).\]
Similarly, $J^G_3(S,A)$ is the $\binom{p+2}{3} \times \#E$ matrix whose $\beta\alpha$-th column is 
\[
(E_{\beta\alpha} \otimes A \otimes A + A \otimes E_{\beta\alpha} \otimes A + A \otimes A \otimes E_{\beta\alpha}) (I- A \otimes A \otimes A)^{-1} \text{vec}(\Omega^{(3)}).
\]
The matrix $J^G_2(S,\omegatwo)$ is the $\binom{p+1}{2} \times p$ matrix whose $i$-th column is $\vec(E_{ii})$. Finally, $J^G_3(S,\omegathree)$ is the $\binom{p+2}{3} \times p$ matrix whose $i$-th column is $\vec(E_{iii})$. 
\begin{remark}
If we want the image of $\phi_G$ to contain cumulants of order higher than 3 as well, analogous formulas can be derived for the rows of the Jacobian  corresponding to these higher-order cumulants.
\end{remark}

\begin{definition}
    Let $G$ be a directed graph and let $A \in \mathbb{R}^{E}$. The \emph{modified Jacobian} of the second- and third-order cumulants of the discrete Lyapunov model, denoted by $\J^G(S,T)$, is defined by 
    \begin{equation*}
        \mathcal{J}^G(S, T) = \begin{pmatrix}
    J^G_2(S,a) & J^G_2(S,\omega^{(2)}) & 0 \\
    J^G_3(T,a) & 0 & J^G_3(T,\omega^{(3)}) 
\end{pmatrix},
    \end{equation*}
    as described in \eqref{eq::DecompOfJacobian}. 
\end{definition}

Since the original Jacobian is the modified Jacobian multiplied by an invertible block matrix from the left, their ranks are equal.  
Since the entries of the modified Jacobian will turn out to have a simpler description (see \cref{prop::J_2(Sigma)descriptionOfEntries} and \cref{prop::J_3(T)descriptionOfEntries}), we will determine its rank instead. 

We notice that the last $2p$ columns of the modified Jacobian are in fact unit vectors, so by reordering the rows of the modified Jacobian its rank decomposes as 
\begin{equation}
\label{eq::rankFullModifiedJacobian}
\mathrm{rank}(\mathcal{J}^G(S,T)) 
= \mathrm{rank}
\begin{pmatrix}
    J^G_2(S,a)_{\mathrm{off}} & 0 & 0 \\
    J^G_3(T,a)_{\mathrm{off}} & 0 & 0 \\
    J^G_2(S,a)_{\mathrm{diag}} & I_p & 0 \\
    J^G_3(T,a)_{\mathrm{diag}} & 0 & I_p \\
\end{pmatrix} 
= 2p + \mathrm{rank}\begin{pmatrix}
    J^G_2(S,a)_{\mathrm{off}} \\
    J^G_3(T,a)_{\mathrm{off}}
\end{pmatrix},
\end{equation}
where we denote the submatrix whose rows correspond to diagonal entries of the cumulants with '$\mathrm{diag}$' and the submatrix whose rows correspond to off-diagonal entries of the cumulants with `$\mathrm{off}$'. 
Thus, 
\begin{equation}
\label{eq::}
\mathcal{J}^G_{\mathrm{off}}(S, T) = 
    \begin{pmatrix}
    J^G_2(S,a)_{\mathrm{off}} \\
    J^G_3(T,a)_{\mathrm{off}}
\end{pmatrix},
\end{equation}
has full rank if and only if the modified Jacobian has full rank. We will refer to this matrix as the off-diagonal modified Jacobian. 
In order to be able to show that the modified Jacobian has full rank we first show the following two results describing its entries, with proofs provided in \cref{app::ProofsForLocalID}. 

\begin{proposition}
\label{prop::J_2(Sigma)descriptionOfEntries}
    The entries of $J^G_2(S,a)$ are given by
\begin{align*}
    J^G_2(S, a)_{(ij), \alpha \rightarrow \beta} = \delta_{j}(\beta) \sum_{l = 0}^{p-1} a_{il} s_{l \alpha} + \delta_{i}(\beta) \sum_{k = 0}^{p-1} a_{jk } s_{k \alpha}. 
\end{align*}
Furthermore, the description can be given by employing treks in $G$:
\begin{align*}
    J^G_2(S,a)_{(ij), \alpha \rightarrow \beta} = \delta_{j}(\beta) \sum_{\tau \in \mathcal{T}_{(1,0)}(i, \alpha)} m_{\tau} + \delta_{i}(\beta) \sum_{\tau \in \mathcal{T}_{(1,0)}(j, \alpha)} m_{\tau},
\end{align*}
where $\mathcal{T}_{(1,0)}(i,j)$ denotes the set of treks between $i$ and $j$ where the path going to $i$ is one longer than the path going to $j$, and $m_{\tau}$ is the trek monomial for a trek $\tau$.

\end{proposition}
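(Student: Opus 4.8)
The plan is to start from the closed-form description of the column of $J^G_2(S,a)$ corresponding to the edge $\alpha\to\beta$, namely $(E_{\beta\alpha}\otimes A + A\otimes E_{\beta\alpha})(I-A\otimes A)^{-1}\vec(\Omega^{(2)})$, and to recognize the trailing factor $(I-A\otimes A)^{-1}\vec(\Omega^{(2)})$ as $\vec(S)$ via the Kronecker parametrization \eqref{eq::discreteLyapunovVec}. Thus the column equals $(E_{\beta\alpha}\otimes A + A\otimes E_{\beta\alpha})\vec(S)$, and the task reduces to reading off its $(i,j)$ coordinate in two different ways.

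First I would apply the standard identity $(B\otimes C)\vec(X)=\vec(CXB^{T})$ to each summand. Since $E_{\beta\alpha}^{T}=E_{\alpha\beta}$, this rewrites the column as $\vec(A\,S\,E_{\alpha\beta}) + \vec(E_{\beta\alpha}\,S\,A^{T})$. A direct entrywise computation of the two matrix products then gives $(A S E_{\alpha\beta})_{ij}=\delta_{j}(\beta)\sum_{l} a_{il}s_{l\alpha}$ and $(E_{\beta\alpha}SA^{T})_{ij}=\delta_{i}(\beta)\sum_{k} s_{\alpha k}a_{jk}$; invoking the symmetry $s_{\alpha k}=s_{k\alpha}$ of the covariance matrix turns the second term into $\delta_{i}(\beta)\sum_{k}a_{jk}s_{k\alpha}$ and yields exactly the claimed formula. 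This part is routine linear algebra, so the only care needed is to keep the vec-convention and the relative positions of $\alpha$ and $\beta$ consistent throughout.

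For the trek description, I would substitute the second-order trek rule from \cref{cor::trekrulefor2ndand3rd}, namely $s_{l\alpha}=\sum_{\tau=(\tau_1,\tau_2)\in\mathcal{T}(l,\alpha)}w^{(2)}_{\mathrm{top}(\tau)}a^{\tau_1}a^{\tau_2}$, into the inner sum $\sum_{l}a_{il}s_{l\alpha}$. Multiplying an equitrek monomial by $a_{il}$ amounts to prepending the edge $l\to i$ to the leg terminating at $l$, lengthening that leg by exactly one edge while leaving the leg to $\alpha$ unchanged. The resulting object is precisely a trek between $i$ and $\alpha$ whose leg to $i$ is one edge longer than its leg to $\alpha$, i.e., an element of $\mathcal{T}_{(1,0)}(i,\alpha)$, and its weight is the trek monomial $m_{\tau}$.

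The main obstacle, and the step I would write out most carefully, is verifying that this correspondence is a bijection. Given $\tau\in\mathcal{T}_{(1,0)}(i,\alpha)$, the leg to $i$ has a well-defined penultimate vertex $l$ and final edge $l\to i$; deleting this edge produces a length-matched equitrek in $\mathcal{T}(l,\alpha)$, and this recovery is unique. Conversely, every pair consisting of an equitrek in $\mathcal{T}(l,\alpha)$ and an edge $l\to i$ arises this way. I would also note that letting the summation index $l$ range over all of $\{0,\dots,p-1\}$ is harmless, since $a_{il}=0$ whenever $l\to i\notin E$, so only genuine edges contribute and the sparsity pattern of $G$ is respected automatically. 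Combining this with the symmetric argument for the $\delta_{i}(\beta)$ term establishes the trek formula.
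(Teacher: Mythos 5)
Your proposal is correct and follows essentially the same route as the paper's proof: both recognize $(I-A\otimes A)^{-1}\vec(\Omega^{(2)})$ as $\vec(S)$, reduce to reading off the $(i,j)$ coordinate of $(E_{\beta\alpha}\otimes A + A\otimes E_{\beta\alpha})\vec(S)$, and obtain the trek formula by substituting the second-order trek rule and observing that multiplying by $a_{il}$ prepends the edge $l\to i$ to one leg. The only (cosmetic) difference is that you pass through the identity $(B\otimes C)\vec(X)=\vec(CXB^{T})$ and the symmetry of $S$, whereas the paper expands the Kronecker-product entries directly; your extra care about the bijection between $\mathcal{T}_{(1,0)}(i,\alpha)$ and pairs (equitrek, final edge) is a welcome refinement of the paper's one-line justification.
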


\begin{proposition}
\label{prop::J_3(T)descriptionOfEntries}
The entries of $J^G_3(T,a)$ are given by
    \begin{equation*}
        J^G_3(T,a)_{(ijk), \alpha \rightarrow \beta} = \delta_{i}(\beta) \sum_{m = 0}^{p-1}\sum_{n = 0}^{p-1} a_{jm} a_{kn} t_{\alpha mn} + \delta_{j}(\beta) \sum_{l = 0}^{p-1} \sum_{n = 0}^{p-1} a_{il} a_{kn} t_{l \alpha n} + \delta_{k}(\beta) \sum_{l = 0}^{p-1} \sum_{m = 0 }^{p-1} a_{il} a_{jm} t_{l m \alpha}.
    \end{equation*}
Furthermore, the description can be given by employing treks in $G$:
    \begin{equation*}
        J^G_3(T, a)_{(ijk), \alpha \rightarrow \beta} = \delta_{i}(\beta) \sum_{\tau \in \mathcal{T}_{(1,1,0)}(j,k,\alpha)} m_{\tau} + \delta_{j}(\beta) \sum_{\tau \in \mathcal{T}_{(1,1,0)}(i,k,\alpha)} m_{\tau}  + \delta_{k}(\beta) \sum_{\tau \in \mathcal{T}_{(1,1,0)}(i,j,\alpha)} m_{\tau},
    \end{equation*}
where $\mathcal{T}_{(1,1,0)}(i,j,k)$ denotes the set of all treks between $i,j$ and $k$ where the paths going to $i$ and $j$ are one longer than the path going to $k$, and $m_{\tau}$ is the trek monomial for a trek $\tau$. 
\end{proposition}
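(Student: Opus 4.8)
The plan is to compute the $(ijk)$-entry of the column of $J^G_3(T,a)$ indexed by the edge $\alpha\to\beta$ directly from its definition, and then reinterpret the resulting expression combinatorially via the trek rule of \cref{cor::trekrulefor2ndand3rd}. Recall from \eqref{eq::JacofTwrtA} that, after stripping the left factor $(I-A\otimes A\otimes A)^{-1}$ that defines the modified Jacobian, this column equals
\[
(E_{\beta\alpha}\otimes A\otimes A + A\otimes E_{\beta\alpha}\otimes A + A\otimes A\otimes E_{\beta\alpha})(I-A\otimes A\otimes A)^{-1}\vec(\Omega^{(3)}).
\]
The first step is to note that $(I-A\otimes A\otimes A)^{-1}\vec(\Omega^{(3)}) = \vec(T)$ by the vectorized discrete Lyapunov equation \eqref{eq::discreteLyapunovVec}, so the column is simply $(E_{\beta\alpha}\otimes A\otimes A + A\otimes E_{\beta\alpha}\otimes A + A\otimes A\otimes E_{\beta\alpha})\vec(T)$. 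Translating each Kronecker action into a $k$-mode product, this is the vectorization of
\[
T\times_1 E_{\beta\alpha}\times_2 A\times_3 A \;+\; T\times_1 A\times_2 E_{\beta\alpha}\times_3 A \;+\; T\times_1 A\times_2 A\times_3 E_{\beta\alpha},
\]
the three terms placing $E_{\beta\alpha}$ in each of the three modes; consequently the sum is independent of the vectorization convention relating $\vec$ to the mode products.

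For the algebraic formula I would expand the $(ijk)$-entry of each summand using $(E_{\beta\alpha})_{xl}=\delta_x(\beta)\,\delta_l(\alpha)$. The first term gives
\[
\sum_{l,m,n}t_{lmn}(E_{\beta\alpha})_{il}\,a_{jm}a_{kn} = \delta_i(\beta)\sum_{m,n}a_{jm}a_{kn}\,t_{\alpha mn},
\]
and the second and third terms produce the analogous expressions carrying $\delta_j(\beta)$ and $\delta_k(\beta)$. Summing these yields exactly the first displayed identity of the proposition; this part is a routine index computation.

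The combinatorial reformulation is the heart of the argument. I would substitute the trek rule $t_{\alpha mn}=\sum_{\tau'\in\mathcal{T}(\alpha,m,n)}m_{\tau'}$ into the term $\delta_i(\beta)\sum_{m,n}a_{jm}a_{kn}\,t_{\alpha mn}$ and exhibit a bijection. Given an equitrek $\tau'=(\tau'_\alpha,\tau'_m,\tau'_n)\in\mathcal{T}(\alpha,m,n)$ of common leg-length $l$, appending the edge $m\to j$ to $\tau'_m$ and $n\to k$ to $\tau'_n$ produces a trek from the same source to $j,k,\alpha$ whose legs to $j$ and $k$ have length $l+1$ while its leg to $\alpha$ has length $l$, i.e.\ an element of $\mathcal{T}_{(1,1,0)}(j,k,\alpha)$, with trek monomial $a_{jm}a_{kn}\,m_{\tau'}=m_{\tau}$ (the top, hence the weight $w^{(3)}_{\mathrm{top}(\tau)}$, is unchanged). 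Conversely, any $\tau\in\mathcal{T}_{(1,1,0)}(j,k,\alpha)$ has unique last edges $m\to j$ and $n\to k$ on its two long legs; deleting them recovers the pair $(m,n)$ together with an equitrek in $\mathcal{T}(\alpha,m,n)$, so the two operations are mutually inverse. This gives $\delta_i(\beta)\sum_{m,n}a_{jm}a_{kn}\,t_{\alpha mn}=\delta_i(\beta)\sum_{\tau\in\mathcal{T}_{(1,1,0)}(j,k,\alpha)}m_\tau$, and the analogous identities for the remaining two terms complete the proof.

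I expect the main obstacle to be bookkeeping rather than conceptual: one must check that the leg-length accounting is consistent (equitrek legs of equal length $l$ become legs of lengths $(l+1,l+1,l)$, matching the subscript $(1,1,0)$), that the bijection remains valid in the degenerate case $l=0$ where the source coincides with $\alpha=m=n$ and the appended edges are $\alpha\to j$, $\alpha\to k$, and that the trek monomial $m_\tau$ is faithfully preserved under appending edges. Some care is also needed with the $\vec$/Kronecker ordering convention, though, as noted above, the full symmetric sum over the three modes absorbs any reordering.
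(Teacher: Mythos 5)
Your proposal is correct and follows essentially the same route as the paper: the algebraic formula is obtained by the identical Kronecker index expansion after substituting $(I-A\otimes A\otimes A)^{-1}\vec(\Omega^{(3)})=\vec(T)$, and the trek description via appending the edges $m\to j$ and $n\to k$ to an equitrek is exactly the ``add an edge to one leg'' bijection the paper invokes (by analogy with \cref{prop::J_2(Sigma)descriptionOfEntries}). You simply spell out the bijection and its degenerate case more explicitly than the paper does.
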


\begin{remark}
    Similar formulas for the modified Jacobian can be derived analogously for higher-order cumulants.  
\end{remark}

We are now ready to state the main result of this section. 

\begin{theorem}
\label{thm:LocIdAllGraphs}
    Let $G = (V,E)$ be any connected directed graph with all self-loops on $p \geq 3$ nodes, then $\mathcal{J}_{\mathrm{off}}^{G}(S, T)$ has full rank ($= |E|$) generically. Therefore, the corresponding discrete Lyapunov model is locally identifiable from its second- and third-order cumulants. 
\end{theorem}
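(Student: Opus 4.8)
The plan is to prove the theorem by exhibiting a single $|E|\times|E|$ minor of the off-diagonal modified Jacobian $\mathcal{J}^G_{\mathrm{off}}(S,T)$ whose determinant is not identically zero as a rational function of the parameters. Since the entries of this matrix are rational functions of $(A,\omega^{(2)},\omega^{(3)})$ by Propositions \ref{prop::J_2(Sigma)descriptionOfEntries} and \ref{prop::J_3(T)descriptionOfEntries}, and the rank of such a matrix is maximal on a Zariski-dense open subset of the irreducible parameter space $\Theta_{G,\le 4}$, a single nonvanishing maximal minor forces $\mathrm{rank}\,\mathcal{J}^G_{\mathrm{off}}(S,T)=|E|$ generically. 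By the rank decomposition in \eqref{eq::rankFullModifiedJacobian} this makes the full modified Jacobian, and hence the true Jacobian, of full column rank, and local identifiability follows from the standard criterion that a full-rank parametrization is finite-to-one (\cref{def::TypesofID}(iii)).

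To locate such a minor and control cancellation I would degenerate the off-diagonal edge weights: set $a_{ii}=d_i$ with generic distinct nonzero $d_i$, and $a_{\beta\alpha}=\epsilon\,b_{\beta\alpha}$ with generic $b_{\beta\alpha}$ for each non-self-loop edge $\alpha\to\beta$, and expand each Jacobian entry in $\epsilon$. By Propositions \ref{prop::J_2(Sigma)descriptionOfEntries}--\ref{prop::J_3(T)descriptionOfEntries} every entry is a sum of trek monomials, and the $\epsilon$-order (valuation) of such a monomial equals the number of non-self-loop steps it uses; in particular the degeneration collapses every directed cycle at leading order, so the leading terms are governed only by self-loops and single off-diagonal edges, exactly as in the acyclic case, which is what yields a uniform argument for all connected all-self-loop graphs, cyclic or not. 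The mechanism I would use is the tropical/term-order criterion: if some transversal (matching of columns to rows) uniquely minimizes the total valuation among all transversals in the Leibniz expansion, then the lowest-order coefficient of the determinant equals $\pm\prod_e \mathrm{lc}(\text{matched entry})$, and is nonzero whenever each matched leading coefficient is.

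For the matching I would proceed in two stages. To each non-self-loop edge $\alpha\to\beta$ I assign the third-order row $t_{\beta\alpha\alpha}$; the map $(\alpha\to\beta)\mapsto\{\beta,\alpha,\alpha\}$ is injective (the repeated index recovers $\alpha$, the single index recovers $\beta$) and in particular separates a reciprocal pair $\alpha\to\beta,\ \beta\to\alpha$ that the symmetric second-order entries cannot. By Proposition \ref{prop::J_3(T)descriptionOfEntries} the entry of this row in its own column is $\sum_{m,n}a_{\alpha m}a_{\alpha n}t_{\alpha mn}$ with leading term $d_\alpha^2\,t_{\alpha\alpha\alpha}$ of valuation $0$, and a direct check shows this is the \emph{unique} valuation-$0$ entry in that row (edges into $\alpha$ or $\beta$ contribute valuation $\ge 2$, and self-loops valuation $\ge 1$). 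Hence any minimum-valuation transversal must pin every non-self-loop column to its own row, reducing the problem to the self-loop sub-block. The $p$ self-loop columns $i\to i$ vanish to positive $\epsilon$-order in every off-diagonal row, so I would match them to rows drawn from the still-unused pool — the off-diagonal entries of $S$ together with the third-order entries not of the form $\{\beta,\alpha,\alpha\}$ — choosing for each $i$ a row local to $i$ (through a trek-neighbour) whose leading coefficient is $\partial/\partial a_{ii}$ of a length-one trek, a nonzero function of the $d$'s, arranged so the self-loop sub-block has a diagonal-dominant valuation pattern and thus a unique minimum transversal.

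The main obstacle is exactly this self-loop sub-block. First, one must guarantee that such distinct rows can be chosen simultaneously: this is a system-of-distinct-representatives (Hall) condition over the combined second- and third-order pool, and it is here that the hypotheses bite — second-order rows alone do not suffice (for a tree the incident-edge sets fail Hall's condition), so the third-order entries and $p\ge 3$ are essential, which is precisely why components of size two require the fourth-order cumulant as noted after the theorem. Second, one must verify that the chosen self-loop sub-block has a unique valuation-minimizing transversal with nonvanishing leading-coefficient product, uniformly across all connected all-self-loop graphs, including cyclic ones where no topological order is available; the cycle-collapse at leading order is what makes this tractable. I would finish by checking that the denominators occurring in the leading coefficients — powers of $1-d_i^2$, $1-d_i^3$, and $1-d_id_j$ — are nonzero for Schur-stable $A$, so that the degenerate point lies in $\Theta_{G,\le 4}$ and the generic conclusion is valid.
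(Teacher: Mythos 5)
Your strategy is genuinely different from the paper's: the paper proves \cref{thm:LocIdAllGraphs} by induction on $|V|$, edge-disconnecting $G$ into two connected components of size at least three, exhibiting a block-triangular structure of $\mathcal{J}^G_{\mathrm{off}}(S,T)$ via \cref{prop::J_2(Sigma)descriptionOfEntries,prop::J_3(T)descriptionOfEntries}, and handling the base cases (all graphs on $3,4,5$ nodes, stars, and generalized two stars) partly by computer algebra. Your tropical degeneration $a_{\beta\alpha}=\epsilon b_{\beta\alpha}$ with a unique valuation-minimizing transversal would, if completed, give a uniform computer-free argument, and your first stage is sound: for the row $t_{\beta\alpha\alpha}$ the only valuation-$0$ entry is indeed in column $\alpha\to\beta$ (at $\epsilon=0$ the cumulant tensors are diagonal, so the surviving term is $d_\alpha^2\,\omega^{(3)}_\alpha/(1-d_\alpha^3)$), and the assignment $(\alpha\to\beta)\mapsto\{\beta,\alpha,\alpha\}$ is injective and distinguishes reciprocal edges.

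However, there is a genuine gap, and it sits exactly where you flag "the main obstacle": the self-loop sub-block. The difficulty is not only the Hall condition but the uniqueness of the minimal transversal, and the problem is that valuation-$0$ entries of the edge columns are \emph{not} confined to the rows $t_{\beta\alpha\alpha}$. By \cref{prop::J_2(Sigma)descriptionOfEntries}, the second-order row $\{\alpha,\beta\}$ also has a valuation-$0$ entry in column $\alpha\to\beta$ (its leading term is $a_{\alpha\alpha}\,\omega^{(2)}_\alpha/(1-d_\alpha^2)$ coming from $\delta_j(\beta)\sum_l a_{il}s_{l\alpha}$ with $l=i=\alpha$), and likewise in column $\beta\to\alpha$ when both edges are present. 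So if you populate the self-loop columns with rows of the form $s_{ij}$ for $j$ a trek-neighbour of $i$ --- which is the natural "local" choice --- you create alternative transversals of the same total valuation $p$: match $\alpha\to\beta$ to $s_{\alpha\beta}$ at valuation $0$, displace $t_{\beta\alpha\alpha}$ onto a self-loop column at valuation $1$, and the total is unchanged. At that point the leading coefficient of the determinant is a signed sum over several minimizing permutations and you must rule out cancellation, which your argument does not address; alternatively you must choose the $p$ self-loop rows so as to exclude all such competing transversals simultaneously for every connected all-self-loop graph, which is a nontrivial combinatorial claim (and cannot be done with second-order rows alone, as you note for trees). Until this step is carried out --- either by a cancellation-free choice of rows with a proof of uniqueness of the minimal transversal, or by an explicit computation of the leading coefficient as a sum --- the proof does not go through. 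A secondary, easily repaired inaccuracy: in the row $t_{\beta\alpha\alpha}$ the entries in columns $\gamma\to\beta$ with $\gamma\ne\alpha$ can have valuation exactly $1$ (not $\ge 2$ as you assert), e.g.\ via $d_\alpha^2\,t_{\gamma\alpha\alpha}$ when $\alpha\to\gamma\in E$; this does not affect your first stage but should be corrected.
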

The proof of \cref{thm:LocIdAllGraphs} (and its modifications) proceed by using the local identifiability of models associated to subgraphs of $G$ to recursively prove that the model associated to $G$ itself is locally identifiable. In particular, we consider subgraphs obtained by edge-disconnecting $G$.

\begin{definition}
    Let $G = (V,E)$ be a connected graph. A subset $F \subset E$ \emph{edge-disconnects} $G$ if the graph $(V, E \setminus F)$ is disconnected. The set $\Xi(G)$ is the set of all graphs of the form $(V,E\setminus F)$ with exactly two connected components.
\end{definition}

Specifically, the proof of \cref{thm:LocIdAllGraphs} proceeds by induction on the number of nodes, where the induction step relies on being able to edge-disconnect the graph into two connected components of size at least three. Therefore, the base case includes all connected graphs on three, four or five nodes and all graphs on at least six nodes where it is not possible to disconnect the graph into two connected components of at least size three. 

Therefore, to understand the base, we must characterize the connected graphs where no matter how the graph is edge-disconnected a connected component of size one or two is always created. Note that the self-loops do not impact this property, so they will be omitted in the following discussion. Furthermore, this property is completely characterized by the skeleton of the graph and therefore the description will be given for undirected graphs.

It is possible to describe the base cases by considering the intersections of paths in the graph.
For example, let $G$ be a graph such that every graph in $\Xi(G)$ has an isolated node. Then every pair of paths in $G$ of vertex length two intersect at a single vertex. In other words, there is a vertex $v$ that is in every edge, so the skeleton of $G$ is a star, as pictured in \cref{fig::StarAndGen2star}.
Similarly, consider the graphs such that every graph in $\Xi(G)$ has a connected component with exactly two vertices. In this case, every pair of paths of vertex length three must intersect in at least one vertex. In \cref{lem::CharecterizeGen2star}, we show that if $G$ has six or more vertices, these graphs are the so-called \emph{generalized 2 stars}.

\begin{definition}
\label{def::Generalized2star}
    Let $G = (V,E)$ be a connected undirected graph with $|V| \geq 6$.  Then $G$ is a \emph{generalized two star} if all paths of vertex length 3 intersect in the same node. We call this node the \emph{center} of the graph. 
\end{definition}

\begin{figure}[H]
    \centering
\begin{center}
    \begin{tikzpicture}
      \node[circle, draw, minimum size=0.2cm, fill = gray] (1) at (0,1) {};
      \node[circle, draw, minimum size=0.2cm, fill = gray] (2) at (-2,-0.5) {};
      \node[circle, draw, minimum size=0.2cm, fill = gray] (3) at (-1,-0.5) {};
      \node[circle, draw, minimum size=0.2cm, fill = gray] (4) at (0,-0.5) {};
      \node[circle, draw, minimum size=0.2cm, fill = gray] (5) at (1,-0.5) {};
      \node[circle, draw, minimum size=0.2cm, fill = gray] (6) at (2,-0.5) {};

        
      \node[circle, draw, minimum size=0.2cm, fill = gray] (7) at (6,1) {};
      \node[circle, draw, minimum size=0.2cm, fill = gray] (8) at (5,-0.5) {};
      \node[circle, draw, minimum size=0.2cm, fill = gray] (9) at (7,-0.5) {};
      \node[circle, draw, minimum size=0.2cm, fill = gray] (10) at (5,2.5) {};
      \node[circle, draw, minimum size=0.2cm, fill = gray] (11) at (7,2.5) {};
      \node[circle, draw, minimum size=0.2cm, fill = gray] (12) at (8,1) {};

      \draw[-,  >=stealth] (1) edge  (2);
      \draw[-,  >=stealth] (1) edge  (3);
      \draw[-,  >=stealth] (1) edge  (4);
      \draw[-,  >=stealth] (1) edge  (5);
      \draw[-,  >=stealth] (1) edge  (6);

      \draw[-,  >=stealth] (7) edge  (8);
      \draw[-,  >=stealth] (7) edge  (9);
      \draw[-,  >=stealth] (7) edge  (10);
      \draw[-,  >=stealth] (7) edge  (11);
      \draw[-,  >=stealth] (8) edge  (9);
      \draw[-,  >=stealth] (11) edge  (12);

    \end{tikzpicture} 
    \end{center}
    \caption{Examples of a star (left) and a generalized two star (right).}
    \label{fig::StarAndGen2star}
\end{figure}
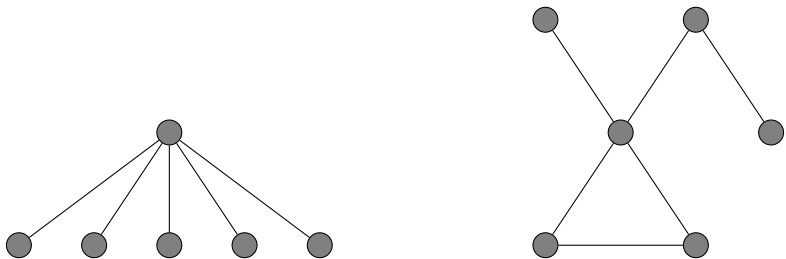

Due to the block structure of the Jacobian, the statement and proof of \cref{thm:LocIdAllGraphs} can be adapted to directed graphs with several connected components. Furthermore, connected components of size two are allowed as long as they are not the complete graph on two nodes, since they are also locally identifiable from the second- and third-order cumulants, see \cref{ExampleIDTwoNodes} or the calculations performed in Maple\footnote{\url{https://github.com/cecilie2424/Local-Identifiability-in-Non-Gaussian-Discrete-Lyapunov-Models}.}. 

\begin{corollary} \label{cor::LocalID_connected_components}
    Let $G = (V,E)$ be any directed graph with all self-loops. If all connected components of $G$ have at least size two and the components of size two are not the two-cyle then $G$ is generically locally identifiable from the second- and third-order cumulants. 
\end{corollary}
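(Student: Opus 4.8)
The plan is to reduce the statement to \cref{thm:LocIdAllGraphs} and the two-node base case by exploiting the way the (off-diagonal modified) Jacobian block-decomposes along the connected components of $G$. Since a full-rank Jacobian of $\varphi_{G, \leq 3}$ certifies local identifiability via \cite[Section 16.1]{sullivant2018algebraic}, and since \eqref{eq::rankFullModifiedJacobian} reduces this to showing that $\mathcal{J}_{\mathrm{off}}^{G}(S,T)$ has full column rank $|E|$ generically, it suffices to control the rank of this single matrix. Writing $G_1, \dots, G_r$ for the connected components of $G$, with edge sets $E_1, \dots, E_r$, the goal is to prove $\mathcal{J}_{\mathrm{off}}^{G}(S,T)$ is, up to row and column permutation, block diagonal with diagonal blocks $\mathcal{J}_{\mathrm{off}}^{G_c}(S,T)$.

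First I would observe that there is no equitrek between nodes in distinct connected components, so by \cref{cor::trekrulefor2ndand3rd} (and the higher-order trek rule) every entry of $S$ and $T$ with indices straddling two components vanishes identically on the model; the associated rows of $\mathcal{J}_{\mathrm{off}}^{G}(S,T)$ are zero and can be dropped without affecting the rank. Next, using the trek descriptions in \cref{prop::J_2(Sigma)descriptionOfEntries} and \cref{prop::J_3(T)descriptionOfEntries}, I would argue that the column indexed by an edge $\alpha \to \beta$ — whose endpoints lie in a single component $c$ because $\alpha \to \beta \in E$ — can only have a nonzero entry in a row $(ij)$ or $(ijk)$ when $\beta$ coincides with one of the row indices (via the Kronecker deltas) and the remaining row indices are joined to $\alpha$ by a trek. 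The latter forces those indices into component $c$ as well, so every nonzero entry of this column sits in a row all of whose indices belong to $c$. This establishes the block-diagonal form, and since the edge weights and error cumulants of distinct components are disjoint coordinates, one obtains
\[
\mathrm{rank}\,\mathcal{J}_{\mathrm{off}}^{G}(S,T) = \sum_{c=1}^{r} \mathrm{rank}\,\mathcal{J}_{\mathrm{off}}^{G_c}(S,T).
\]

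It then remains to verify full rank of each diagonal block. For a component $G_c$ of size at least three (connected, with all self-loops), this is exactly \cref{thm:LocIdAllGraphs}, giving $\mathrm{rank}\,\mathcal{J}_{\mathrm{off}}^{G_c}(S,T) = |E_c|$ generically. For a component of size two that is not the two-cycle, i.e. the graph of \cref{fig::TwoNodesTwoSelf-loops} up to relabeling, the explicit solution in \cref{ExampleIDTwoNodes} (confirmed by the accompanying Maple computation) shows that this block has full rank generically as well. Because each component's full-rank locus is a dense open subset of that component's parameter space and the coordinates are independent across components, their intersection is again dense and open; hence $\mathcal{J}_{\mathrm{off}}^{G}(S,T)$ attains full rank $\sum_c |E_c| = |E|$ on a dense open subset of the parameter space of $\varphi_{G,\leq 3}$. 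By \eqref{eq::rankFullModifiedJacobian} the Jacobian of $\varphi_{G,\leq 3}$ then has full rank generically, and local identifiability follows.

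The main obstacle I expect is the bookkeeping in the block-diagonalization step: one must argue carefully, through the trek formulas of \cref{prop::J_2(Sigma)descriptionOfEntries} and \cref{prop::J_3(T)descriptionOfEntries}, that no column belonging to one component can contribute a nonzero entry to a row \emph{owned} by another component, and that the cross-component rows are genuinely zero rather than merely generically zero. Once this vanishing is pinned down, the remainder is a direct appeal to the already-established \cref{thm:LocIdAllGraphs} and the two-node base case, together with the elementary fact that rank is additive over block-diagonal matrices.
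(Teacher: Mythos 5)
Your proposal is correct and follows essentially the same route the paper takes: the off-diagonal modified Jacobian is block diagonal along the connected components (cross-component rows vanish because there are no equitreks between components, as your column-by-column check via \cref{prop::J_2(Sigma)descriptionOfEntries} and \cref{prop::J_3(T)descriptionOfEntries} confirms), so the rank is the sum of the component ranks, which are full by \cref{thm:LocIdAllGraphs} for components of size at least three and by the two-node computation of \cref{ExampleIDTwoNodes} for the remaining components. The paper only sketches this argument in the surrounding text, so your write-up supplies exactly the intended details.
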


The restrictions on the size of the connected components of the graph are motivated by non-identifiability of the one-node graph (with a self-loop) and the two-cycle on two nodes (with two self-loops) from just the second- and third-order cumulants. 
A graph which has an isolated node with a self-loop is not locally identifiable using any number of cumulants as discussed in \Cref{rmk:isolatednode}. 
The two-cycle can only be locally identified by also employing some fourth-order cumulants. 

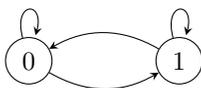
\begin{figure}[H]
    \centering
\begin{center}
    \begin{tikzpicture}
      \node[circle, draw, minimum size=0.5cm] (1) at (0,0) {0};
      \node[circle, draw, minimum size=0.5cm] (2) at (2,0) {1};

      \draw[->,  >=stealth] (1) edge[loop above] (1);
      \draw[->,  >=stealth] (2) edge[loop above] (2);
      \draw[->,  >=stealth] (1) edge [bend right] (2);
      \draw[->,  >=stealth] (2) edge[bend right] (1);
    \end{tikzpicture} 
    \end{center}
    \caption{The complete graph on two nodes.}
    \label{fig::TwoCycle}
\end{figure}

\begin{example}
\label{ex::twoCycleFinitetoone}
For the two-cycle on two nodes, the second- and third-order cumulants are not enough to obtain full rank of the modified Jacobian, since there will be one more parameter than there are equations. However, it is shown computationally in Maple\footnotemark[\value{footnote}] 
that, by adding the two diagonal fourth-order cumulants, the two $\omega^{(4)}$'s and one off-diagonal fourth-order cumulant (so in total using three fourth-order cumulants) we can prove that the modified Jacobian of both the second-, third- and the three fourth-order cumulants will have full rank. Therefore, the actual Jacobian will also have full~rank. 
\end{example}

From \cref{ex::twoCycleFinitetoone} it follows that the proof of \cref{thm:LocIdAllGraphs} could be modified to also allow for the use of fourth-order cumulants which would then yield local identifiability of discrete Lyapunov models for any graph which has all self-loops and no isolated nodes. Thus, it includes the few graphs not covered by \cref{cor::LocalID_connected_components}, i.e. the graphs which have a connected component which is the complete graph on two nodes. However, all other graphs were already included in \cref{cor::LocalID_connected_components}. We make this distinction of when the fourth-order cumulants are needed because in practice it is difficult to estimate higher-order cumulants accurately, and the difficulty grows with the order of the cumulant. Therefore, it is desirable to obtain the identifiability result from the lowest possible order of cumulants

\begin{corollary}
    Let $G = (V,E)$ be any directed graph with all self-loops on $p \geq 2$ nodes with no isolated nodes, then $\mathcal{J}_{\mathrm{off}}^{G}(S, T)$ has full rank ($= |E|$) generically. It follows that the corresponding discrete Lyapunov model is locally identifiable from its second-, third-, and fourth-order cumulants. 
\end{corollary}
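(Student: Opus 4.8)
The plan is to reduce to connected components via the block structure of the Jacobian and then to supply the one component type not already covered by \cref{cor::LocalID_connected_components}, namely the two-cycle, using the fourth-order computation of \cref{ex::twoCycleFinitetoone}.

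First I would establish the block decomposition. By \cref{cor::trekrulefor2ndand3rd} and its fourth-order analogue, each cumulant entry is a sum over the equitreks connecting its indices; since there is no equitrek between nodes lying in two distinct connected components of $G$, every cumulant whose indices span two components vanishes identically, and every parameter $a_{\beta\alpha}$, $\omega^{(2)}_i$, $\omega^{(3)}_i$ or $\omega^{(4)}_i$ influences only the cumulants supported on its own component. After reordering rows and columns, the Jacobian of $\varphi_{G,\leq 4}$ is therefore block diagonal across the connected components (the cross-component cumulants contribute only identically zero rows), so its rank is the sum of the ranks of the per-component blocks. It then suffices to show that each connected component contributes a full-rank block.

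Next I would treat the component types one at a time. Isolated nodes are ruled out by hypothesis. For a component $H$ with $|H|\ge 3$, \cref{thm:LocIdAllGraphs} already gives that $\mathcal{J}_{\mathrm{off}}^{H}(S,T)$ has full rank generically from the second- and third-order cumulants alone, so no fourth-order rows are needed for these blocks. For a two-node component that is not the two-cycle (a single directed edge together with its self-loops), the explicit solution of \cref{ExampleIDTwoNodes}, confirmed in Maple, shows the corresponding block has full rank already from $(S,T)$. The only remaining case is a component equal to the complete graph on two nodes of \cref{fig::TwoCycle}; here the second- and third-order cumulants leave one parameter undetermined, and \cref{ex::twoCycleFinitetoone} shows that augmenting the block with the two diagonal fourth-order cumulants, the two corresponding $\omega^{(4)}$ columns, and one off-diagonal fourth-order cumulant restores full rank. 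Assembling the blocks, the extended Jacobian of $\varphi_{G,\leq 4}$ attains full column rank generically, and by the standard full-rank criterion \cite[Section~16.1]{sullivant2018algebraic} together with the rank decomposition in \eqref{eq::rankFullModifiedJacobian}, the model is generically locally identifiable from cumulants up to order four.

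The hard part will be the two-cycle block: the inductive edge-disconnection argument behind \cref{thm:LocIdAllGraphs} breaks down for components of only two nodes, so this block cannot be obtained from the recursive scheme and instead rests on the direct fourth-order computation of \cref{ex::twoCycleFinitetoone}. A secondary point requiring care is that the fourth-order rows introduced for a two-cycle component must not create spurious coupling to the parameters of other components; this is again guaranteed by the absence of cross-component equitreks, which keeps the extended Jacobian block diagonal and lets the per-component ranks simply add.
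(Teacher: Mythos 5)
Your proposal is correct and follows essentially the same route the paper intends: the block-diagonal decomposition of the Jacobian over connected components (already used for \cref{cor::LocalID_connected_components}), \cref{thm:LocIdAllGraphs} and the two-node DAG case for all components except the two-cycle, and the fourth-order augmentation of \cref{ex::twoCycleFinitetoone} for two-cycle components. The only point worth flagging is that, as stated, the corollary's claim that $\mathcal{J}_{\mathrm{off}}^{G}(S,T)$ itself has full rank cannot hold when a two-cycle component is present (that block is rank-deficient by one); your reading that the intended object is the Jacobian augmented with the fourth-order rows is the right one.
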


\begin{proof}[Proof of \cref{thm:LocIdAllGraphs}]
    The proof proceeds by induction on the number of nodes. The base case is any graph on three, four, or five nodes, and all graphs with $|V| \geq 6$ whose skeleton is a star or generalized two star, as explained in the preceding paragraph. 
    The base cases are covered by \cref{lem::Case345,lem:generalized2Star}. 

    Let $G = (V,E)$ be a graph on $p\ge6$ nodes, where $V=\{0\}\cup [p-1]$. If $G$ is a star or generalized two star, then $G$ is covered by the base case. Therefore, we now assume this is not the case. This implies that $G$ can be edge-disconnected into exactly two connected components where each component has at least three vertices. Let $G' = (V,E')$ denote such an edge-disconnecting, and let $G_1 = (V_1, E_1)$ and $G_2 = (V_2, E_2)$ be the two connected components of $G'$. Further, let $|V_i| = n_i$ for $i = 1,2$, and write the vertices as $V_i = \{g^{i}_1, \dots, g^{i}_{n_i} \}$. We now let the sparsity pattern of $A$ be given by $G'$.

    Let $\mathcal{J}^{G_i}_{\mathrm{off}}(S, T)$ denote the columns of the modified Jacobian corresponding to $G_i$.
    The induction hypothesis then implies that $\mathcal{J}_{\mathrm{off}}^{G_1}(S, T)$ and $\mathcal{J}_{\mathrm{off}}^{G_2}(S, T)$ both have full rank generically. 

    We aim to show that $\mathcal{J}_{\mathrm{off}}^G(S, T)$ has generically full rank. 
    To simplify the notation, we construct a matrix $X$ that contains $\mathcal{J}_{\mathrm{off}}^G(S, T)$ as a submatrix. The only difference is that $X$ will (potentially) include columns corresponding to edges that are missing in $G$.

    The rows and columns of $X$ will be partitioned into four groups. 
    The first two groups correspond to submatrices of $\mathcal{J}_{\mathrm{off}}^{G_1}(S, T)$ and $\mathcal{J}_{\mathrm{off}}^{G_2}(S, T)$, respectively. The rows are chosen so that the submatrix has full rank, which is possible by the induction hypothesis. Specifically, the rows are
    \begin{align*}
        R_{G_1} = \{ (i,j) \; | \; i,j \in V_1, i \neq j \} \cup \{(i,j,k) \; i,j,k \in V_1, i \neq j \neq k \}, \\
        R_{G_2} = \{ (i,j) \; | \; i,j \in V_2, i \neq j \} \cup \{(i,j,k) \; i,j,k \in V_2, i \neq j \neq k \}, 
    \end{align*}
and the columns correspond to the edges in each connected component
    \begin{align*}
        C_{G_1} = \{ \alpha \rightarrow \beta \in E_{1} \} \qquad  \text{ and } \qquad 
        C_{G_2} = \{ \alpha \rightarrow \beta \in E_{2} \},
    \end{align*}
    respectively. We denote these two matrices by $\mathcal{J}^{G_1}$ and $\mathcal{J}^{G_2}$. The remaining blocks correspond to potential edges between the connected components. For $j = 1, \dots, n_2$, consider the rows and columns 
    \begin{align*}
        R_{G_1 \rightarrow g^{2}_j} = \{ (g^1_k g^1_k g^2_j) \; | \; k = 1, \dots, n_1 \}, \qquad C_{G_1 \rightarrow g^{2}_j} = \{ (g^1_k \rightarrow g^2_j) \; | \; k = 1, \dots, n_1 \}.
    \end{align*}
    Conversely, for $j = 1, \dots, n_1$, consider the rows and columns 
    \begin{align*}
        R_{G_2 \rightarrow g^{1}_j} = \{ (g^2_k g^2_k g^1_j) \; | \; k = 1, \dots, n_2 \}, \qquad C_{G_2 \rightarrow g^{1}_j} = \{ (g^2_k \rightarrow g^1_j) \; | \; k = 1, \dots, n_2 \}. 
    \end{align*}
    With this ordering of rows and columns, the matrix $X$ has the following block structure: 
    \begin{equation*}
    \begin{pNiceArray}{ccccccccc}[first-row,first-col, nullify-dots]
        & C_{G_1} & C_{G_2} & C_{G_1 \rightarrow g^{2}_1} & \dots & C_{G_1 \rightarrow g^{2}_{n_2}} & C_{G_2 \rightarrow g^{1}_1} & \dots & C_{G_2 \rightarrow g^{1}_{n_1}} \\ 
        R_{G_1} \quad\ & \mathcal{J}^{G_1} & & & & & & & \\ 
        R_{G_2} \quad\ & & \mathcal{J}^{G_2} & & & & & & \\ 
        R_{G_1 \rightarrow g^{2}_1} \;\,\ & & & M_{G_1 \rightarrow g^{2}_1} & & & \text{\Large 0} & & \\ 
        \vdots \qquad\ & & & & \rotatebox[origin=c]{20}{$\ddots$} & & & & \\
        R_{G_1 \rightarrow g^{2}_{n_2}} & & & & & M_{G_1 \rightarrow g^{2}_{n_2}} & & & \\ 
        R_{G_2 \rightarrow g^{1}_2} \;\,\ & & & \text{\Large 0} & & & M_{G_2 \rightarrow g^{1}_2} & &
        \\ 
        \vdots \qquad\ & & & & & & & \rotatebox[origin=c]{20}{$\ddots$} & \\
        R_{G_2 \rightarrow g^{1}_{n_1}} & & & & & & & & M_{G_2 \rightarrow g^{1}_{n_1}}
    \end{pNiceArray}. 
\end{equation*}

    We see that $X$ is block diagonal by using \cref{prop::J_2(Sigma)descriptionOfEntries,prop::J_3(T)descriptionOfEntries} to infer the zero entries as follows. Generally, the zero entries arise either due to the sink of an edge not matching the indices of the considered row or due to the missing treks between the two connected components.

    \textbf{Case 1:} Zeros due to sinks not matching row indices.

    First, in order for there to be a potential non-zero entry at position $(lm), \alpha \rightarrow \beta$ or $(klm), \alpha \rightarrow \beta$ in this matrix, $\beta$ needs to equal either $l$ or $m$, or $k,l$ or $m$, respectively. 
    This accounts for the zeros between rows $R_{G_1}$ and columns with sink in $G_2$, namely columns in $C_{G_2}$ and $C_{G_1 \rightarrow g^{2}_{j}}$. 
    The same argument holds for rows $R_{G_2}$ and columns in $C_{G_1}$ and $C_{G_2 \rightarrow g^{1}_{j}}$.

    Now consider the columns $C_{G_1 \to g_j^{2}}$ for $j=1,\dots,n_2$. For fixed $j$, entries in $C_{G_1 \to g_j^{2}}$ can only be non-zero if they correspond to $R_{G_1 \to g_j^{2}}$, indicated as blocks $M_{G_1 \to g_j^{2}}$ in the matrix. For all $i \neq j$, however, the entries of $C_{G_1 \to g_j^{2}}$ in $R_{G_1 \to g_i^{2}}$ are zero, since $g_j^{2} \notin V_1 \cup \{g^2_i\}$. 
    The same argument holds, if we flip the indices 1 and 2. 
    
    \textbf{Case 2:} Zeros due to missing treks.

    We are now left with the cases where the sink $\beta$ of an edge $\alpha \to \beta$ might match one of the indices $(lm)$ or $(klm)$. Here we use the existence (or non-existence) of treks to deduce the zero pattern. Specifically, we use the characterization of the entries of $J^G_2(S)$ and $J^G_3(T)$ based on treks where the length of the legs differ by one. If there is no match between $\beta$ and $(lm)$ or $(klm)$, the first case applies.

    Assume without loss of generality that $\beta = m$. For the corresponding entry in $J^G_2(S)$ to be non-zero there needs to exist a base trek between $l$ and $\alpha$ (we can use self-loops to obtain treks where the length of one of the legs is off by one). Since the two connected components are disconnected in $G'$, this is only possible if $l$ and $\alpha$ are in the same connected component. Similarly, $J^G_3(T)$ can only be non-zero if, again without loss of generality, $\beta = m$ and there exists a base trek between $k$, $l$, and $\alpha$. Thus, $k$,~$l$, and $\alpha$ all need to belong to the same connected component. 

    Consequently, the remaining entries in the columns $C_{G_2 \to g^1_j}$ for each $j=1,\dots,n_1$ are zero in the rows $R_{G_1}$, $R_{G_1 \to g_1^2}, \dots,R_{G_1 \to g_{n_2}^2}$, since $\alpha \in V_2$ is never in the same connected component as $k,l \in V_1$. 
    The same argument holds for each $j=1 ,\dots,n_2$ for the remaining entries of the columns $C_{G_1 \to g^2_j}$ in the rows $R_{G_2}$ and $R_{G_2 \to g_1^1}, \dots,R_{G_2 \to g_{n_1}^1}$.

    The remaining entries to consider are in the columns in $C_{G_1}$ and $C_{G_2}$. 
    In the columns in $C_{G_1}$, again the entries in $R_{G_2 \to g_1^1}, \dots,R_{G_2 \to g_{n_1}^1}$ are zero, since $\alpha \in V_1$ and $k,l \in V_2$. Similarly, in the columns in $C_{G_2}$, the entries in $R_{G_1 \to g_1^2}, \dots,R_{G_1 \to g_{n_2}^2}$ are zero, since $\alpha \in V_2$ and $k,l \in V_1$.
    Further, for the columns $C_{G_1}$ and the rows $R_{G_1 \rightarrow g_j^{2}}$, the sink $\beta \in V_1$ will always coincide with at least one index of the rows. However, since one of the indices in $(klm)$ will always be $g_j^{2}$, there would have to be a trek between $\beta, g_j^{2}$ and another element in $G_1$ for the entry to be non-zero, which is not the case. For all other indices, the first case again applies. The argument is analogous for the columns in  $C_{G_2}$.

    To show that $X$  generically has full rank, it suffices to show that each block on the diagonal generically has full rank. The matrices $\mathcal{J}^{G_1}$ and  $\mathcal{J}^{G_2}$ generically have full rank by the induction hypothesis. Employing \cref{lem::FullrankMmatrix}, we further obtain that that each of the $M$ matrices generically has full rank as well. Since the generic choices ensuring the $M$ matrices have generically full rank do not depend on $G$, the submatrix $\mathcal{J}^G_{\mathrm{off}}(S,T)$ will also have full rank. 
\end{proof}


\section{Defining Equations} \label{sec::Equations}
In this section, we explore the vanishing ideal of the discrete Lyapunov model for specific families of graphs. First, in \cref{sec::equations_polytree_single_source}, we consider the case where the graph $G$ is a polytree with a single source and only one self-loop. In this setting, the vanishing ideal is toric. In \cref{sec::VanishingDeterminants}, we present several determinantal results which give rise to polynomials in the vanishing ideal for other types of graphs. In \cref{sec:BirationalImplicitization} we explore a different approach for obtaining polynomials in the vanishing ideal.

\subsection{Polytrees With One Source} \label{sec::equations_polytree_single_source}
In this subsection, we focus on the case of $G$ being a polytree with one source node (i.e., a directed tree) and a single self-loop at the source. For such a graph, the vanishing ideal $\mathcal{I}^{\leq n}(G)$ is toric for any $n \geq 2$. The model admits a monomial parametrization given by the \textit{shortest} equitreks. In a  directred tree with a single self-loop at the source, there is a unique shortest equitrek between any set of nodes.

Let $G=(V, E)$ be such a polytree, where $V = \{0\}\cup [p-1]$ and 0 is the unique source node. For the $n$th order model, we introduce new parameters $v_i^{(2)}, v_i^{(3)}, \ldots, v_i^{(n)}$ for each $i\in V$, such that
$$ v_i^{(2)} = w_0^{(2)}\frac{\left(a^{\lambda_i}\right)^2}{1-a_{00}^2}, \qquad v_i^{(3)} = w_0^{(3)}\frac{\left(a^{\lambda_i}\right)^3}{1-a_{00}^3}, \qquad \ldots \qquad v_i^{(n)} = w_0^{(n)}\frac{\left(a^{\lambda_i}\right)^n}{1-a_{00}^n},$$
where $\lambda_i$ is the unique shortest path from the source to vertex $i$, and $a^{\lambda_i}$ is the corresponding path polynomial. This invertible change of parameters allows us to derive the \emph{shortest equitrek rule}. 

For vertices $i_1, \ldots, i_l$, let $\tau(i_1, \ldots, i_l)$ denote the unique shortest $l$-equitrek between $i_1, \ldots, i_l$ consisting of paths $\tau_1, \ldots, \tau_l$. Let $top(\tau)$ denote the top vertex of the trek $\tau$. Using the new $v$ parameters, we can write the second- and third-order cumulants as
$$s_{i_1,i_2} = v^{(2)}_{top(\tau(i_1,i_2))}a^{\tau_1}a^{\tau_2}, \qquad t_{i_1,i_2, i_3} = v^{(3)}_{top(\tau(i_1,i_2, i_3))}a^{\tau_1}a^{\tau_2}a^{\tau_3}.$$
More generally,
$$(T_n)_{i_1, \ldots, i_n} = v^{(n)}_{top(\tau(i_1, \ldots, i_n))}a^{\tau_1}\ldots a^{\tau_n}.$$

Notice that the new parametrization is monomial, so the Zariski closure of its image is a toric variety~\cite{sullivant2018algebraic}. We can determine the associated parametrization matrix $P$. Its rows are indexed by the parameters and its columns are indexed by the cumulants of the model.

\begin{example}
    Let $G$ be the graph in \cref{fig::Polytree with on source}.
\begin{figure}[H]
    \centering
     \begin{tikzpicture}
    \node[circle, draw, minimum size=0.5cm] (0) at (0,0) {0};
    \node[circle, draw, minimum size=0.5cm] (1) at (0,-1) {1};
    \node[circle, draw, minimum size=0.5cm] (2) at (1,-2) {2};
    \node[circle, draw, minimum size=0.5cm] (3) at (-1,-2) {3};
    \draw[->,  >=stealth] (0) edge[loop above] (0);
    \draw[->,  >=stealth] (0) edge (1);
    \draw[->,  >=stealth] (1) edge (2);
    \draw[->,  >=stealth] (1) edge (3);
\end{tikzpicture} 
    \caption{A polytree on four vertices with a single self-loop at the source.}
    \label{fig::Polytree with on source}
\end{figure}
For the third-order cumulant model, the parameters are $v^{(2)}_0, \ldots, v^{(2)}_3$, $v^{(3)}_0, \ldots, v^{(3)}_3$ and $a_{00}, \ldots, a_{31}$.  
The associated parametrization matrix $P$ is given by
\begin{equation*}
\scalebox{0.9}{$\begin{pmatrix}
1 & 1 & 1 & 1 & 0 & 1 & 1 & 0 & 1 & 0 & 0 & 0 & 0 & 0 & 0 & 0 & 0 & 0 & 0 & 0 & 0 & 0 & 0 & 0 & 0 & 0 & 0 & 0 & 0 & 0\\
0 & 0 & 0 & 0 & 1 & 0 & 0 & 0 & 1 & 0 & 0 & 0 & 0 & 0 & 0 & 0 & 0 & 0 & 0 & 0 & 0 & 0 & 0 & 0 & 0 & 0 & 0 & 0 & 0 & 0\\
0 & 0 & 0 & 0 & 0 & 0 & 0 & 1 & 0 & 0 & 0 & 0 & 0 & 0 & 0 & 0 & 0 & 0 & 0 & 0 & 0 & 0 & 0 & 0 & 0 & 0 & 0 & 0 & 0 & 0\\
0 & 0 & 0 & 0 & 0 & 0 & 0 & 0 & 0 & 1 & 0 & 0 & 0 & 0 & 0 & 0 & 0 & 0 & 0 & 0 & 0 & 0 & 0 & 0 & 0 & 0 & 0 & 0 & 0 & 0\\
0 & 0 & 0 & 0 & 0 & 0 & 0 & 0 & 0 & 0 & 1 & 1 & 1 & 1 & 1 & 1 & 1 & 1 & 1 & 1 & 0 & 1 & 1 & 1 & 1 & 1 & 0 & 0 & 0 & 0\\
0 & 0 & 0 & 0 & 0 & 0 & 0 & 0 & 0 & 0 & 0 & 0 & 0 & 0 & 0 & 0 & 0 & 0 & 0 & 0 & 1 & 0 & 0 & 0 & 0 & 0 & 0 & 1 & 1 & 0\\
0 & 0 & 0 & 0 & 0 & 0 & 0 & 0 & 0 & 0 & 0 & 0 & 0 & 0 & 0 & 0 & 0 & 0 & 0 & 0 & 0 & 0 & 0 & 0 & 0 & 0 & 1 & 0 & 0 & 0\\
0 & 0 & 0 & 0 & 0 & 0 & 0 & 0 & 0 & 0 & 0 & 0 & 0 & 0 & 0 & 0 & 0 & 0 & 0 & 0 & 0 & 0 & 0 & 0 & 0 & 0 & 0 & 0 & 0 & 1\\
0 & 1 & 2 & 2 & 0 & 1 & 1 & 0 & 0 & 0 & 0 & 2 & 4 & 4 & 1 & 3 & 3 & 2 & 2 & 2 & 0 & 2 & 2 & 1 & 1 & 1 & 0 & 0 & 0 & 0\\
0 & 1 & 1 & 1 & 0 & 2 & 2 & 0 & 0 & 0 & 0 & 1 & 1 & 1 & 2 & 2 & 2 & 2 & 2 & 2 & 0 & 3 & 3 & 3 & 3 & 3 & 0 & 0 & 0 & 0\\
0 & 0 & 1 & 0 & 0 & 1 & 0 & 0 & 1 & 0 & 0 & 0 & 1 & 0 & 0 & 1 & 0 & 2 & 1 & 0 & 0 & 1 & 0 & 2 & 1 & 0 & 0 & 2 & 1 & 0\\
0 & 0 & 0 & 1 & 0 & 0 & 1 & 0 & 1 & 0 & 0 & 0 & 0 & 1 & 0 & 0 & 1 & 0 & 1 & 2 & 0 & 0 & 1 & 0 & 1 & 2 & 0 & 1 & 2 & 0
\end{pmatrix}$}.
\end{equation*}
The associated toric ideal lives in a polynomial ring with 30 variables $s_{ij}$ and $t_{i j k}$ for $i, j, k\in \{0, 1, 2, 3\}$, and has degree $1374$. Its Gröbner basis contains 640 generators.
\end{example}

Even though the ideal itself is intractable, we can still recover certain information about the graph from the toric ideal. 

\begin{definition}
 Define the \emph{level} of a vertex $v$ in a directed tree to be the length of the shortest path from the source node to $v$. For each $\ell \geq 0$, define $V_\ell(G)$ to be the set of all vertices in $G$ at level $\ell$.
\end{definition}

The following result shows how the levels of vertices can be recovered from the vanishing ideal. 

\begin{proposition}
\label{prop:: levels recovery}
Let $G=(V, E)$ be a directed tree with a single self-loop at the source, where $V=\{0\}\cup [p-1]$, and let $\mathcal I^{\leq n}(G)$ be the vanishing ideal of the $n$th order cumulant model associated to $G$ for $n\geq 3$. Then,
\begin{enumerate}
    \item For $i\in V$, $s_{ij}^3t_{iii}^2 - s_{ii}^3t_{iij}t_{ijj}\in \mathcal{I}^{\leq n}(G)$ for all $j\in V$ if and only if $i$ is the source node. 
    \item Let 0 be the source node. Then, for $i, j\in V$, $s_{0i}t_{00j}-s_{0j}t_{00i} \in \mathcal{I}^{\leq n}(G)$ if and only if $i$ and $j$ are on the same level.
    \item For $i, j\in V$ on different levels, $s_{ij}t_{00j}-s_{0j}t_{0ij} \in \mathcal{I}^{\leq n}(G)$ if and only if the level of $j$ is greater than level of $i$ (i.e., $j$ is further from the source node than $i$). 
\end{enumerate}
\end{proposition}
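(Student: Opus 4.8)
The plan is to exploit that, after the invertible change of parameters to the $v_i^{(m)}$'s, the model is parametrized by \emph{monomials} in the algebraically independent parameters $\{v_i^{(2)},v_i^{(3)}\}_{i\in V}$ and the edge weights (among them the self-loop weight $a_{00}$). Consequently $\mathcal{I}^{\leq n}(G)$ is a toric ideal, and each of the three expressions is a binomial whose two terms map to single parameter-monomials; such a binomial lies in $\mathcal{I}^{\leq n}(G)$ if and only if its two monomial images coincide, i.e.\ if and only if their exponent vectors in the $v$'s and the edge weights agree. First I would record the explicit monomial attached to each relevant cumulant entry by locating its unique shortest equitrek. Writing $d_i$ for the level of $i$ and $a^{\lambda_i}$ for the source-to-$i$ path monomial, the key fact is: if the indices of a cumulant all lie at one level, the equitrek is rooted at their lowest common ancestor and carries no self-loops, so in particular $s_{ii}=v_i^{(2)}$ and $t_{iii}=v_i^{(3)}$; otherwise it is rooted at the source $0$ and the shorter legs are padded with self-loops, contributing the factor $a_{00}^{\,n\max_r d_{i_r}-\sum_r d_{i_r}}$. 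For instance $s_{ij}=v_0^{(2)}a_{00}^{|d_i-d_j|}a^{\lambda_i}a^{\lambda_j}$ when $d_i\neq d_j$, and $t_{ijk}=v_0^{(3)}a_{00}^{\,3\max(d_i,d_j,d_k)-d_i-d_j-d_k}a^{\lambda_i}a^{\lambda_j}a^{\lambda_k}$ when $i,j,k$ are not all at the same level.

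For part (1), the forward direction is immediate: if $i$ is the source then the statement is trivial for $j=0$, and for $j\neq0$ both $s_{0j}^3t_{000}^2$ and $s_{00}^3t_{00j}t_{0jj}$ evaluate to $(v_0^{(2)})^3(v_0^{(3)})^2a_{00}^{3d_j}(a^{\lambda_j})^3$, so the binomial vanishes on the model. For the converse I would argue contrapositively: if $i\neq0$, I test the binomial at $j=0$. Using the formulas above, $s_{i0}^3t_{iii}^2$ and $s_{ii}^3t_{ii0}t_{i00}$ both carry the same edge-weight part $a_{00}^{3d_i}(a^{\lambda_i})^3$, but their $v$-parts are $(v_0^{(2)})^3(v_i^{(3)})^2$ and $(v_i^{(2)})^3(v_0^{(3)})^2$ respectively. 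Since $i\neq0$ these involve four distinct independent parameters, so the two monomials differ and the binomial is not in $\mathcal{I}^{\leq n}(G)$. Hence the binomial lies in the ideal for all $j$ exactly when $i=0$.

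Parts (2) and (3) reduce to comparing powers of $a_{00}$, because every cumulant entry occurring there is rooted at the source, so their $v$-parts and off-diagonal edge-weight parts automatically match. For part (2), $s_{0i}t_{00j}$ and $s_{0j}t_{00i}$ both equal $v_0^{(2)}v_0^{(3)}a^{\lambda_i}a^{\lambda_j}$ times $a_{00}^{d_i+2d_j}$ and $a_{00}^{d_j+2d_i}$ respectively, and these agree precisely when $d_i=d_j$, i.e.\ $i$ and $j$ share a level. For part (3), assuming $d_i\neq d_j$, the entry $s_{ij}t_{00j}$ contributes the $a_{00}$-exponent $|d_i-d_j|+2d_j$ while $s_{0j}t_{0ij}$ contributes $3\max(d_i,d_j)-d_i$; a short case split shows these are equal when $d_j>d_i$ and unequal when $d_i>d_j$, which is exactly the claimed level comparison. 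I would dispatch the degenerate cases $i=j$, $i=0$, or $j=0$ separately, but each collapses to a trivial identity consistent with the stated criterion.

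The main obstacle is the first step: pinning down the \emph{unique} shortest equitrek for each multi-index and reading off its self-loop content. Concretely, one must verify that whenever the indices do not all lie at a common level the source is the only admissible top (no interior vertex can equalize leg lengths without a self-loop), and that the minimal self-loop padding yields exactly the exponent $n\max_r d_{i_r}-\sum_r d_{i_r}$ of $a_{00}$. One must also justify that the $v_i^{(m)}$ are genuinely independent coordinates---this is what makes the toric membership test equivalent to equality of exponent vectors, and it is precisely what separates the source ($v_i=v_0$) from the non-source case in part (1). Once these structural facts are in place, the remaining content of all three parts is routine exponent arithmetic.
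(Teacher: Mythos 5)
Your proposal is correct and follows essentially the same route as the paper: both use the shortest-equitrek monomial parametrization in the independent parameters $v_i^{(m)}$ and the edge weights, reduce each binomial to a comparison of exponent vectors (in particular the power of $a_{00}$ for parts (2) and (3)), and rely on the invertibility of the change of coordinates. The only cosmetic difference is in the converse of part (1), where you specialize to $j=0$ and compare the $v$-parts directly, while the paper phrases the same obstruction as $v^{(2)}_{\mathrm{top}(\tau(i,j))}\neq v^{(2)}_i$ for some $j$; both are valid and the exponent arithmetic you sketch matches the paper's computations.
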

The proof can be found in \cref{appendix::Defining equations}.

\begin{example} \label{ex::ExtraPolynomial}
    For the polytree in \cref{fig:: polytree on five nodes}, the polynomial $s_{02}s_{34}t_{224}-s_{03}s_{22}t_{244}$ lies in the vanishing ideal. Note that this polynomial does not satisfy any of the conditions from \cref{prop:: levels recovery}. Here, vertex 2 is the top of the equitrek between vertices 3 and 4.

    \begin{figure}[H]
        \centering
            \begin{tikzpicture}
            \node[circle, draw, minimum size=0.5cm] (0) at (0,0) {0};
            \node[circle, draw, minimum size=0.5cm] (1) at (-1,-1) {1};
            \node[circle, draw, minimum size=0.5cm] (2) at (1,-1) {2};
            \node[circle, draw, minimum size=0.5cm] (3) at (0, -2) {3};
            \node[circle, draw, minimum size=0.5cm] (4) at (2, -2) {4};

            \draw[->,  >=stealth] (0) edge[loop above] (0);
            \draw[->,  >=stealth] (0) edge (1);
            \draw[->,  >=stealth] (0) edge (2);
            \draw[->,  >=stealth] (2) edge (3);
            \draw[->,  >=stealth] (2) edge (4);
        \end{tikzpicture}
        \caption{A polytree on five vertices with a single self-loop at the source.}
        \label{fig:: polytree on five nodes}
    \end{figure}
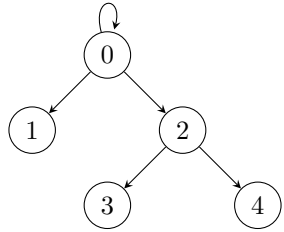

\end{example}

The extra polynomial found in \cref{ex::ExtraPolynomial}, corresponding to a trek which does not have a top node equal to the source, is an instance of a more general result, as described in \cref{prop: top trek recovery} below.

\begin{proposition}
\label{prop: top trek recovery}
    Let $G=(V, E)$ be a directed tree with a single self-loop at the source, where $V=\{0\}\cup [p-1]$, and let $I^{\leq n}(G)$ be the vanishing ideal of the $k$th order cumulant model associated to~$G$ for $n\geq 3$. Let $i, j, l\in V$. Then,
    $$s_{0l}s_{ij}t_{llj}-s_{0i}s_{ll}t_{ljj}\in \mathcal I^{\leq n}(G)$$
    if and only if $l$ is the top of the shortest equitrek between $i$ and $j$.
\end{proposition}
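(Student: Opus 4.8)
The plan is to exploit that, for a directed tree with a single self-loop at the source, the model is toric with the explicit monomial parametrization given by the shortest equitrek rule recorded above. After the invertible change of parameters to the $v^{(m)}_c$, each cumulant entry is a single monomial in the free parameters $\{v^{(2)}_c, v^{(3)}_c\}_{c\in V}$, the self-loop parameter $a_{00}$, and the off-diagonal edge weights. Since the vanishing ideal of a monomial map is the toric ideal generated by the binomials $M_1-M_2$ with equal image, the statement reduces to the combinatorial claim that $\phi(s_{0l}s_{ij}t_{llj})$ and $\phi(s_{0i}s_{ll}t_{ljj})$ coincide as monomials if and only if $l$ is the top of the shortest equitrek between $i$ and $j$. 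First I would record, via the shortest equitrek rule, the monomial attached to each of the six factors; writing $\mathrm{lev}(u)$ for the level of $u$, $\lambda_u$ for the source-to-$u$ path, and $z(u,w)$ for the deepest common ancestor, the top of $s_{uw}$ is $z(u,w)$ when $\mathrm{lev}(u)=\mathrm{lev}(w)$ and is the source $0$ otherwise, the unequal legs being balanced by the appropriate power of $a_{00}$.

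For the \emph{only if} direction I would isolate the $v$-content of the two monomials, which is the cleanest invariant to compare because the $v^{(2)}_c$ and $v^{(3)}_c$ are distinct indeterminates. The left monomial contributes $v^{(2)}_0$ (the top of $\mathcal{T}(0,l)$ is always $0$), $v^{(2)}_{\mathrm{top}(i,j)}$, and $v^{(3)}_{\mathrm{top}(l,j)}$, while the right contributes $v^{(2)}_0$, $v^{(2)}_l$, and $v^{(3)}_{\mathrm{top}(l,j)}$; here I would first observe that $\mathrm{top}(l,l,j)=\mathrm{top}(l,j,j)=\mathrm{top}(l,j)$, since the top of a shortest equitrek depends only on the set of distinct sink vertices, not on their multiplicities. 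Cancelling the common factors $v^{(2)}_0$ and $v^{(3)}_{\mathrm{top}(l,j)}$ forces $v^{(2)}_{\mathrm{top}(i,j)}=v^{(2)}_l$, i.e. $\mathrm{top}(i,j)=l$. This already gives necessity.

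For the \emph{if} direction I would assume $\mathrm{top}(i,j)=l$ and verify that the remaining exponents, those of the off-diagonal edges and of $a_{00}$, also agree. When $\mathrm{lev}(i)=\mathrm{lev}(j)$ (equivalently, $l=z(i,j)$ is a proper common ancestor of $i$ and $j$), I would factor each path monomial through $l$ as $a^{\lambda_i}=a^{\lambda_l}a^{l\to i}$ and $a^{\lambda_j}=a^{\lambda_l}a^{l\to j}$, where $a^{l\to i}$ denotes the edge product along the $l$-to-$i$ path. Substituting the six monomials, both sides collapse to $v^{(2)}_0 v^{(2)}_l v^{(3)}_0\, a_{00}^{2L-\mathrm{lev}(l)}(a^{\lambda_l})^4 (a^{l\to i})(a^{l\to j})^2$ with $L=\mathrm{lev}(i)$, so the binomial lies in the ideal. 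The edge exponents match by the tree structure; the real content is the equality of the $a_{00}$-exponents, which records that the self-loop padding needed to turn the two base treks into equitreks is the same on both sides precisely because $l$ sits at the meeting point of the $i$- and $j$-legs.

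The step I expect to be the main obstacle is exactly this self-loop exponent bookkeeping. The powers of $a_{00}$ arise from padding the shorter legs of each equitrek up to the common length, and tracking them cleanly requires splitting according to whether the top is a proper internal ancestor (levels of $i$ and $j$ equal) or the source (levels unequal, forcing $l=0$). In the internal-ancestor case the two sides balance automatically as above, covering Example~\ref{ex::ExtraPolynomial}; in the boundary case $l=0$ one must additionally keep track of which of $i,j$ is deeper, since the padding is asymmetric in the two legs. I would therefore organize the \emph{if} direction as a case analysis on $\mathrm{lev}(i)$ versus $\mathrm{lev}(j)$, treating the generic internal-top case first and then the source case separately, making explicit any ordering convention on $i$ and $j$ required for the self-loop exponents to coincide.
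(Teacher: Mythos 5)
Your strategy coincides with the paper's: pass to the shortest-equitrek monomial parametrization, observe that a binomial lies in the toric ideal iff its two monomials have equal image, prove necessity by comparing $v$-content, and prove sufficiency by factoring $a^{\lambda_i}=a^{\lambda_l}a^{\tau_i}$, $a^{\lambda_j}=a^{\lambda_l}a^{\tau_j}$ and matching exponents. Your necessity argument (multiset equality of the $v^{(2)}$-indices after cancelling $v^{(2)}_0$ and $v^{(3)}_{\mathrm{top}(l,j)}$) is exactly the paper's, which tracks the degree of $v^{(2)}_l$ coming from $s_{ll}$; and your worked internal-top case reproduces the paper's one-line verification via $s_{ij}=s_{ll}a^{\tau_i}a^{\tau_j}$, $s_{0i}=s_{0l}a_{00}^{|\tau_i|}a^{\tau_i}$, $t_{ljj}=t_{llj}\,a^{\tau_j}/a_{00}^{|\tau_i|}$.

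The case you defer is not mere bookkeeping: when $l=0$ and $\mathrm{lev}(i)<\mathrm{lev}(j)$ the binomial genuinely fails to lie in the ideal, so no ordering convention rescues the ``if'' direction as stated. Indeed, for $l=0$ the two monomials evaluate to
\begin{equation*}
s_{00}s_{ij}t_{00j}=\bigl(v^{(2)}_0\bigr)^2v^{(3)}_0\,a_{00}^{\,|\mathrm{lev}(i)-\mathrm{lev}(j)|+2\,\mathrm{lev}(j)}\,a^{\lambda_i}\bigl(a^{\lambda_j}\bigr)^2,
\qquad
s_{0i}s_{00}t_{0jj}=\bigl(v^{(2)}_0\bigr)^2v^{(3)}_0\,a_{00}^{\,\mathrm{lev}(i)+\mathrm{lev}(j)}\,a^{\lambda_i}\bigl(a^{\lambda_j}\bigr)^2,
\end{equation*}
and the $a_{00}$-exponents agree iff $|\mathrm{lev}(i)-\mathrm{lev}(j)|=\mathrm{lev}(i)-\mathrm{lev}(j)$, i.e.\ iff $\mathrm{lev}(i)\geq\mathrm{lev}(j)$ (already for $0\to i$, $0\to j'\to j$ one gets $a_{00}^{5}$ versus $a_{00}^{3}$). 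The paper's own proof does not detect this because its three displayed identities tacitly assume $|\tau_i|=|\tau_j|$: for instance $s_{0i}=s_{0l}a_{00}^{|\tau_i|}a^{\tau_i}$ is false when the $i$-leg of the shortest equitrek from $0$ carries self-loop padding. So your instinct that an ordering convention is needed is correct, but the conclusion is that the hypothesis $\mathrm{lev}(i)\geq\mathrm{lev}(j)$ (automatic when $l\neq 0$, and satisfied in \cref{ex::ExtraPolynomial}, where $i$ and $j$ are siblings) must be added to the statement of the ``if'' direction; your necessity argument is unaffected. To complete your write-up, carry out exactly the exponent comparison above and record this extra hypothesis rather than searching for a padding scheme that makes the unrestricted claim work.
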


Consider two directed trees $G$ and $H$ with self-loops at their respective sources and the same number of nodes. It follows from \cref{prop:: levels recovery} and \cref{prop: top trek recovery} that if the levels of the graphs differ, or if there are two vertices for which the tops of the shortest equitreks are different, then the ideals are different. It turns out that these are also necessary conditions for the ideals to be different.

\begin{theorem}\label{thm:equivalent_trees}
    Two directed trees $G$ and $H$ on the same vertex set with self-loops at their respective sources define the same ideal if and only if 
    \begin{enumerate}
    \item[(i)] for all $\ell \geq 0$, the vertices at level $\ell$ in $G$ are the same as the vertices at level $\ell$ in $H$, i.e. $V_\ell(G) = V_\ell(H)$, and 
    \item[(ii)] for each pair of vertices, the tops of the shortest equitreks between them are the same in $G$ and~$H$.
    \end{enumerate}
\end{theorem}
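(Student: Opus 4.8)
The plan is to prove the two directions separately: the forward implication follows quickly from the recovery results already established, while the backward implication forms the technical core. Throughout I take $n \geq 3$, so that third-order cumulants are available, as required by \cref{prop:: levels recovery} and \cref{prop: top trek recovery}.

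For the forward direction, suppose $\mathcal{I}^{\leq n}(G) = \mathcal{I}^{\leq n}(H)$. Each of \cref{prop:: levels recovery} and \cref{prop: top trek recovery} characterizes a structural feature of the tree by the membership of an explicit binomial in the ideal, so when the two ideals coincide every such membership transfers verbatim. Concretely, \cref{prop:: levels recovery}(1) singles out the source by a membership condition that does not itself refer to the source, so $G$ and $H$ share the same source; calling it $0$, part (2) shows that the partition of $V$ into same-level classes agrees between the trees, and part (3) shows that the ``further from the source'' order on these classes agrees. Because every level $0,1,\dots,\ell_{\max}$ of a rooted tree is nonempty, the partition together with its linear order pins down each $V_\ell$, giving (i). Finally \cref{prop: top trek recovery}, applied with the common source, shows that the top of the shortest equitrek between any pair $i,j$ is the same vertex in both trees, giving (ii).

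For the backward direction, assume (i) and (ii). I would first pass to convenient toric coordinates. For each vertex $i$ let $b_i = a^{\lambda_i}$ be the product of the off-diagonal edge weights along the unique source-to-$i$ path (so $b_0 = 1$); since $a_{e_i} = b_i / b_{\mathrm{pa}(i)}$ for the parent edge $e_i$ of $i$, the assignment $\{a_e\} \mapsto \{b_i : i \neq 0\}$ is an invertible (unimodular) monomial change of parameters and hence preserves the toric ideal. In a tree the source-to-$i_k$ path factors through the top $t$, so $a^{(t \to i_k)} = b_{i_k}/b_t$, and the shortest equitrek rule becomes
\[
(T_m)_{i_1\ldots i_m} \;=\; v^{(m)}_{t}\, a_{00}^{\,e_{00}}\, \frac{\prod_{k} b_{i_k}}{b_t^{\,m}}, \qquad t = \mathrm{top}(\tau(i_1,\dots,i_m)),
\]
where $e_{00}=0$ when all leaves share a level and $e_{00} = m\max_k L_{i_k} - \sum_k L_{i_k}$ otherwise, with $L_i$ the level of $i$. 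In these coordinates the exponent vector of the column indexed by $(T_m)_{i_1\ldots i_m}$ is determined by the order $m$, the multiset $\{i_1,\dots,i_m\}$ (intrinsic to the cumulant variable, hence shared by $G$ and $H$), the integer $e_{00}$, and the top $t$.

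It then remains to check that, under (i) and (ii), the last two data agree for $G$ and $H$ on every cumulant, so that the two exponent matrices are literally equal and the toric ideals coincide. The integer $e_{00}$, and whether a leaf set lies on a single level, are functions of the leaf levels alone, which agree by (i). The tops require a short combinatorial lemma: when the leaves are not all on one level the top is forced to be the source, since no non-source vertex carries a self-loop with which to equalize unequal leg lengths; and when they share a level the top is their meet, which in a rooted tree equals the shallowest of the pairwise meets (any pair whose meet were strictly shallower would contradict minimality). Since for same-level pairs the pairwise meet is exactly the pairwise shortest-equitrek top, condition (ii) together with the level data of (i) propagates pairwise agreement to agreement of all tuple tops. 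I expect this propagation step---extending the purely pairwise top information of (ii) to the higher-order cumulants, and confirming that it governs the entire exponent matrix in the $b$-coordinates---to be the main obstacle, since it is precisely what justifies that controlling only second-order tops suffices to force equality of the full ideal.
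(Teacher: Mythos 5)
Your forward direction is exactly the paper's: both deduce (i) and (ii) from the membership criteria in \cref{prop:: levels recovery} and \cref{prop: top trek recovery}, and your observation that the ordered level partition pins down each $V_\ell$ because every level of a rooted tree is nonempty is the right way to close that half. Your backward direction, however, is genuinely different from the paper's. The paper proves it constructively via \cref{lem: swapping} and \cref{lem: row equiv}: it defines a local swap operation on two same-level vertices, checks by a column-by-column comparison that each swap produces a row-equivalent parametrization matrix, and then shows by induction on the number of levels that any two trees satisfying (i) and (ii) are connected by a sequence of swaps. You instead perform the triangular (unimodular) monomial substitution $a_e \mapsto b_i = a^{\lambda_i}$ once and for all, after which the exponent vector of every cumulant is a function only of the order, the index multiset, the self-loop count $e_{00}$ (a function of the levels), and the tuple top; the key propagation fact --- that the top of a same-level tuple is the shallowest of the pairwise tops (attained by some pair, since the set-LCA has two distinct children each containing a leaf), while mixed-level tuples are forced to have the source as top --- then makes the two exponent matrices literally equal, so the toric ideals coincide with no induction at all. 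Both arguments ultimately establish row-equivalence of $P_G$ and $P_H$ over $\mathbb{Z}$, and both must verify the same statement about how pairwise tops control tuple tops (the paper does this inside the case analysis of \cref{lem: swapping}); your route is shorter and makes transparent that the ideal depends only on the level function and the pairwise-top function, whereas the paper's route additionally yields the combinatorial byproduct that the equivalence class of $G$ is generated by explicit local swaps, which is lost in your version. The only point to tighten is the parenthetical justifying that the set-meet is the shallowest pairwise meet: you need both that every pairwise meet is a descendant of the set-meet and that the set-meet is itself realized as a pairwise meet; with that spelled out, the argument is complete.
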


This result will follow from \cref{lem: swapping} and \cref{lem: row equiv}, both of which are proven in \cref{appendix::Defining equations}. The main idea is that if the graphs $G$ and $H$ satisfy the conditions of \cref{thm:equivalent_trees}, then their corresponding parametrization matrices are row-equivalent, which implies that the vanishing ideal coincide.

\begin{lemma}
\label{lem: swapping}
    Let $G = (V, E)$ be a directed tree with $V= \{0\}\cup [p-1]$, where $0$ is the source with a self-loop. Let $i$ and $j$ be two vertices from the same level, each having at most one outgoing edge. Suppose there is no vertex $i'$ such that the shortest equitrek between $i$ and $i'$ or between $j$ and $i'$ is shorter than the equitrek between $i$ and $j$. Let $H$ be the graph constructed by swapping $i$ and $j$ in $G$. Then, for any order $n$ of cumulants, the vanishing ideals coincide: $\mathcal{I}^{\leq n}(H)=\mathcal{I}^{\leq n}(G)$.
\end{lemma}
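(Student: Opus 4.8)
The plan is to exploit the toric structure coming from the shortest-equitrek rule of \cref{sec::equations_polytree_single_source}. Since $G$ is a directed tree with a single self-loop at the source, every cumulant coordinate is a single monomial in the level parameters $v_x^{(m)}$ and the off-diagonal edge weights $a_{\beta\alpha}$, so $\mathcal{I}^{\leq n}(G)$ is exactly the toric ideal attached to the parametrization matrix $P_G$, whose rows are indexed by parameters and whose columns are indexed by cumulant coordinates. The graph $H$ is obtained from $G$ by transposing the labels $i$ and $j$, that is $H=\pi(G)$ for the transposition $\pi=(i\,j)$; consequently $\mathcal{I}^{\leq n}(H)=\sigma_\pi(\mathcal{I}^{\leq n}(G))$, where $\sigma_\pi$ is the automorphism of the cumulant ring that permutes indices by $\pi$. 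Hence proving $\mathcal{I}^{\leq n}(H)=\mathcal{I}^{\leq n}(G)$ is equivalent to proving that $\mathcal{I}^{\leq n}(G)$ is $\sigma_\pi$-invariant. Now $\sigma_\pi(\mathcal{I}^{\leq n}(G))$ is the toric ideal of the column-permuted matrix $P_G\Pi_\pi$, and row-equivalent parametrization matrices define the same vanishing ideal (\cref{lem: row equiv}); so it suffices to show that permuting the columns of $P_G$ by $\pi$ leaves its rational row space unchanged, i.e.\ that $P_G\Pi_\pi$ is obtained from $P_G$ by an invertible row operation.

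First I would record the structural effect of the swap. Because $i$ and $j$ lie on the same level, the source-to-$i$ and source-to-$j$ paths have equal edge length, and because each of $i,j$ has at most one outgoing edge, the branches hanging below $i$ and below $j$ are cleanly interchanged by $\pi$ without altering the level of any vertex. I would then define the candidate row operation on parameters: it sends each $v_x^{(m)}$ to $v_{\pi(x)}^{(m)}$ and relabels the edge weights of the $i$-branch with those of the $j$-branch and conversely, fixing the self-loop parameter and every parameter disjoint from both branches. The two easy families of columns are handled directly: cumulant coordinates whose index multiset avoids one of the two branches give identical rows before and after the swap, and the diagonal coordinates $s_{kk},t_{kkk},\dots$ are single $v$-parameters and are merely permuted among themselves.

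The crux is the mixed columns, namely the cumulant coordinates whose index multiset meets both branches or involves $i$ and $j$ asymmetrically, since for these the path monomials genuinely change under the swap. The key point is that the top vertex of the shortest equitrek of every index set is preserved by $\pi$, and this is exactly what the hypothesis supplies: no vertex $i'$ admits a shortest equitrek to $i$ or to $j$ that is strictly shorter than the shortest equitrek between $i$ and $j$. This forces the common ancestor realizing any mixed equitrek to lie at or above the nearest common ancestor of $i$ and $j$, a vertex that $\pi$ fixes; hence the $v$-label of each mixed monomial is unchanged and only its edge-parameter part is relabeled by the operation above. I would make this precise through a short case analysis according to whether the equitrek top lies strictly above the nearest common ancestor of $i$ and $j$, exactly at it, or inside one of the two branches, checking in each case that the permuted row is a $\mathbb{Q}$-combination of rows of $P_G$. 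This identifies $P_G\Pi_\pi$ with an invertible row transform of $P_G$ and completes the row-space identity.

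I expect the mixed-column bookkeeping to be the \emph{main obstacle}: one must verify that the hypothesis genuinely excludes every configuration in which swapping $i$ and $j$ would move the top of some shortest equitrek, which would change a $v$-label and break the row operation, and one must check that the residual change in the edge-parameter exponents is realized by an invertible integer combination of parameter rows rather than by a mere permutation of them. The level and single-outgoing-edge hypotheses localize the whole verification to a neighborhood of the nearest common ancestor of $i$ and $j$, reducing it to a finite computation, after which equality of the ideals for every order $n$ follows from \cref{lem: row equiv}.
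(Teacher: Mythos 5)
Your proposal takes essentially the same route as the paper: both reduce the claim to showing that the shortest-equitrek parametrization matrices of $G$ and $H$ are row equivalent, by first checking that the hypothesis forces the tops of all shortest equitreks to be preserved under the swap (so the $v$-rows agree) and then expressing the few changed edge-rows as integer combinations of the original ones. The only substantive differences are cosmetic — you phrase the swap as a vertex relabeling $\pi=(i\,j)$ and the conclusion as $\sigma_\pi$-invariance of a single toric ideal, whereas the paper compares $P_G$ and $P_H$ directly and actually carries out the column-by-column verification of the row relations that you defer as the "main obstacle"; note also that the fact that row-equivalent parametrization matrices define the same ideal is a standard toric fact and is not the content of \cref{lem: row equiv}, which you cite for it.
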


\begin{lemma}
\label{lem: row equiv}
    Let $G$ and $H$ be directed trees with self-loops at their respective sources, and with the same number of vertices $p$, labeled from 0 to $p-1$. Then $H$ can be constructed from $G$ using the swapping operations from~\cref{lem: swapping} if and only if the following conditions hold:
    \begin{enumerate}
        \item For all $\ell \geq 0$, the vertices at level $\ell$ in $G$ and $H$ coincide, i.e., $V_\ell(G) = V_\ell(H)$.
        \item For each pair of vertices $i$ and $j$, the top vertices of the shortest equitreks are the same in $G$ and in~$H$. (Equivalently, the first $p$ rows of the base trek parametrization matrix are the same.)
    \end{enumerate}
\end{lemma}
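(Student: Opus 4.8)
The plan is to prove the two implications separately, with the forward direction ("$H$ constructible from $G$ by swaps $\Rightarrow$ conditions (1) and (2)") being the routine one and the converse carrying the inductive weight. Throughout I will use that a swap from \cref{lem: swapping} is a transposition of two vertices lying on a common level, so it acts on the labelled tree as a shape-preserving relabelling.

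For the forward direction I would show that a single swap preserves both invariants and then compose over the finite sequence producing $H$. Condition (1) is immediate: a swap moves two vertices that already share a level and fixes every other vertex, so the level of each vertex is unchanged. For condition (2) I would avoid tracking lowest common ancestors by hand and instead invoke \cref{lem: swapping}, which guarantees that a swap leaves the vanishing ideal unchanged, together with \cref{prop:: levels recovery} and \cref{prop: top trek recovery}, which recover the levels and the tops of the shortest equitreks from the ideal. Hence every pairwise top is preserved by a swap, and iterating gives conditions (1) and (2) for $(G,H)$.

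For the converse I would induct on the number of vertices (equivalently on the height), using condition (2) to rigidify the trees and swaps to remove the residual freedom. The first step is to observe that condition (2) forces agreement level by level: levels $0$ and $1$ are identical in $G$ and $H$, and on each level the LCA data determines the partition into sibling groups together with the parent of every group of size at least two (two same-level vertices share a parent if and only if that parent is their common top, which is fixed). Thus $G$ and $H$ can differ only in the attachment of \emph{only-children}, i.e. singleton sibling groups, each of which is necessarily an only-child in both graphs; equivalently, the discrepancy is a permutation of the subtrees that hang from level $\ell_0-1$ by a single edge, where $\ell_0$ is the first level at which $G$ and the current graph disagree. I would decompose this permutation into transpositions and realize each by an allowed swap — swapping two only-children directly when each has at most one outgoing edge, and otherwise swapping their single-child parents on level $\ell_0-1$ — choosing in every case a pair that is mutually closest in equitrek distance so that the hypotheses of \cref{lem: swapping} are met, and applying the induction hypothesis to the subtrees below level $\ell_0$.

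The main obstacle I anticipate is exactly this realization step: verifying that each reattachment can be carried out by a swap satisfying \emph{all} hypotheses of \cref{lem: swapping} — in particular the mutual-closeness condition, which is the delicate one — and ordering the swaps so that agreement already secured at shallower levels is never destroyed. I would control mutual closeness by always swapping inside a minimal sibling/only-child configuration, so that no third vertex is strictly nearer, and I would guarantee non-interference by processing levels from the top down, which confines each swap to level $\ell_0$ and the subtrees beneath it. Finally, as the surrounding discussion indicates, the conclusion can be repackaged by noting that conditions (1) and (2) make the base trek parametrization matrices of $G$ and $H$ row-equivalent — condition (2) being precisely the statement that their first $p$ rows agree — which is the form in which the lemma feeds into \cref{thm:equivalent_trees}.
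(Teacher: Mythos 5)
Your forward direction is fine: a swap visibly preserves levels, and preservation of the shortest-equitrek tops is exactly what the proof of \cref{lem: swapping} establishes (the paper cites that argument directly, whereas you route through invariance of the ideal together with \cref{prop:: levels recovery} and \cref{prop: top trek recovery}; both routes work). For the converse the paper also inducts, but bottom-up rather than top-down: it deletes the deepest level, aligns the truncated trees recursively, and then tries to lift the resulting swaps to the full trees and finish among the leaves.

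The genuine gap is in your realization step, and the obstacle you flag as ``the delicate one'' is in fact fatal rather than technical. Take $V=\{0,\dots,9\}$ with $G$ given by $0\to 0$, $0\to1$, $0\to 2$, $1\to 3$, $1\to 9$, $2\to 4$, $3\to 5$, $4\to 6$, $5\to 7$, $5\to 8$, and let $H$ be identical except that $3\to 5,\ 4\to 6$ are replaced by $4\to 5,\ 3\to 6$. Conditions (1) and (2) hold: levels agree, every same-level top is unchanged (in particular the tops for $(5,6)$ and $(7,8)$ are $0$ and $5$ in both trees), and all cross-level tops are $0$. The first level of disagreement is $\ell_0=3$, where the only-children $5$ and $6$ must be exchanged. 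But $5$ has two children, so the direct swap violates the one-outgoing-edge hypothesis of \cref{lem: swapping}; your fallback of swapping the parents $3$ and $4$ violates the mutual-closeness hypothesis, since the equitrek between $3$ and its sibling $9$ (length $1$) is shorter than the one between $3$ and $4$ (length $2$); and swapping the grandparents $1$ and $2$ fails because $1$ has two children. Indeed the only admissible nontrivial swaps from $G$ are $(3,9)$ and $(7,8)$, so $5$ can never acquire parent $4$ and $H$ is unreachable, even though (1) and (2) hold. Two further points: even when a parent swap is admissible, it alters the edges from level $\ell_0-2$ into level $\ell_0-1$ unless the two parents are siblings, so your claim that top-down processing confines each swap to level $\ell_0$ and below is incorrect -- the repair necessarily cascades upward. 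And the paper's own proof stalls on the same configuration at its lifting step: the swap $(5,6)$ is admissible in the truncated trees, where $5$ is a leaf, but not in $G$, and the observation that vertices with at least two children coincide in $G$ and $H$ does not license applying it. Any correct treatment must either relax the hypotheses of \cref{lem: swapping}, weaken \cref{lem: row equiv}, or prove \cref{thm:equivalent_trees} directly by comparing row spaces of the parametrization matrices.
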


\cref{lem: row equiv} immediately implies~\cref{thm:equivalent_trees} and constructively shows that any two graphs satisfying the conditions in the theorem have row-equivalent parametrization matrices.

\subsection{Defining Polynomials via the Vanishing of Determinants} \label{sec::VanishingDeterminants}
In this section, we consider the problem of finding elements of the vanishing ideal of the discrete Lyapunov model in cases where the defining graph is not a polytree with a single source. We do this by showing that certain graphical structures imply the vanishing of certain determinants. We further show that this is not a complete characterization and it remains a topic for future research.

We now show that certain matrices have to drop rank under certain constraints on the parents and ancestors of a set of vertices. The proofs are given in \cref{appendix::Defining equations}. To state these results, we define the set of parents of a subset $U = \{v_1,\dots,v_k\} \subset V$ by
\[
\mathrm{pa}(U) = \bigcup_{i=1}^k \mathrm{pa}(v_i).
\]

\begin{proposition}
\label{prop::Determinant_Parents}
    Let $G=(V, E)$ be a graph on $p$ vertices, and let $U= \{v_1, \ldots, v_k\}$ be a subset of~$V$. 
    Denote by $S'$ the $(p -k)\times k$ submatrix of $S$, formed by columns $v_1, \ldots, v_k$ and not containing any diagonal entries. Then,
    $$rk(S')\leq |pa(U)|.$$
The same result holds for all slices of the tensor $T$. Here, for a fixed $i\in V$, a slice $T_i$ is a $p\times p$ matrix $T_{ijk}$ for $0\leq j, k\leq p-1$. 
\end{proposition}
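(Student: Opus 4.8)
The plan is to exploit the off-diagonal part of the discrete Lyapunov equation \eqref{eq:DiscreteLyapunovEqSecondOrder} together with the sparsity of $A$ in order to exhibit $S'$ as a product of two matrices whose common inner dimension is $|\pa(U)|$. I take the rows of $S'$ to be indexed by $V \setminus U$ and the columns by $U$, which gives the prescribed $(p-k)\times k$ shape and guarantees that no diagonal entry of $S$ occurs. For such a row $r \notin U$ and column $v_s \in U$ we have $r \neq v_s$, so $\Omega^{(2)}_{r v_s} = 0$ and the Lyapunov equation yields $s_{r v_s} = (A S A^T)_{r v_s} = \sum_{k,l} a_{rk}\, s_{kl}\, a_{v_s l}$.

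Next I would use that $a_{v_s l} \neq 0$ forces $l \to v_s \in E$, i.e.\ $l \in \pa(v_s) \subseteq \pa(U)$, so that $s_{r v_s} = \sum_{l \in \pa(U)} (A S)_{r l}\, a_{v_s l}$. Reading this off as a matrix product gives
\[
S' = B\,C, \qquad B_{r l} = (AS)_{r l}\ \ (r \in V\setminus U,\ l \in \pa(U)), \qquad C_{l v_s} = a_{v_s l}\ \ (l \in \pa(U),\ v_s \in U),
\]
where $B$ is $(p-k)\times|\pa(U)|$ and $C$ is $|\pa(U)|\times k$. Since rank is submultiplicative, $\mathrm{rk}(S') = \mathrm{rk}(BC) \le |\pa(U)|$, which is exactly the claim.

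For the slices of $T$ I would run the identical argument one mode at a time. Fix $i \in V$ and consider the submatrix of the slice $T_i$ with rows $r \in V \setminus U$ and columns $v_s \in U$. Because $r \neq v_s$, the triple $(i,r,v_s)$ is never on the tensor diagonal, hence $\Omega^{(3)}_{i r v_s} = 0$ and the third-order equation \eqref{eq::DiscreteLyapunovRecursive}, written as $T = T \times_1 A \times_2 A \times_3 A + \Omega^{(3)}$, gives $t_{i r v_s} = \sum_{l,m,n} a_{il}\, a_{rm}\, a_{v_s n}\, t_{lmn}$. Pulling out the factor $a_{v_s n}$, again supported on $n \in \pa(U)$, expresses this submatrix as $B'\,C$ with $B'_{r n} = (T \times_1 A \times_2 A)_{i r n}$ and the same $C$ as above, so the bound $\le |\pa(U)|$ follows verbatim.

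The argument is short, so the only points requiring care are bookkeeping ones: first, verifying that every entry of the chosen submatrix is genuinely off-diagonal so the noise term vanishes and the pure recursion applies (this is precisely what forces the rows to be $V\setminus U$); and second, in the tensor case, keeping the parent-supported factor attached to the \emph{column} mode while the remaining two modes are absorbed into the left factor $B'$. I expect this tensor bookkeeping to be the main thing to get right, though no genuine obstacle arises. One could alternatively derive the same factorization from the equitrek rule of \cref{cor::trekrulefor2ndand3rd} by grouping equitreks according to the final edge $l \to v_s$ entering the column vertex (using that $v_s$ cannot be the top of an equitrek to a distinct vertex $r$), but the Lyapunov-equation route is cleaner and avoids manipulating the infinite sums.
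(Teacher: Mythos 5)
Your proof is correct and follows essentially the same route as the paper: both exploit the recursive Lyapunov equation on off-diagonal entries (where the noise term vanishes) and factor the column-mode entries of $A$ through $\pa(U)$, yielding $S'$ (resp.\ the slice submatrix) as a product of a $(p-k)\times|\pa(U)|$ matrix with a $|\pa(U)|\times k$ matrix; your $B = AS$ restricted and $C = (A')^T$ are exactly the paper's $K^T$ and $(A')^T$. The only cosmetic difference is that in the tensor case the paper keeps all rows of the slice except those hitting the tensor diagonal $t_{jjj}$, whereas you restrict to rows in $V\setminus U$, which is a harmless specialization of the same argument.
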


Furthermore, the statement of~\cref{prop::Determinant_Parents} holds for the matrix formed by gluing $S$ and all slices of $T$ below each other, forming a $p\times (p+p^2)$ matrix.

\begin{proposition}
\label{prop::Determinant_Parents_full}
     Let $G=(V, E)$ be a graph on $p$ vertices, and let $U= \{v_1, \ldots, v_k\}$ be a subset of~$V$. 
     Form a $p(p+1)\times p$ matrix by stacking $S$, and $T_i$, $i\in \{0\}\cup [p-1]$. Denote by $Q$ the submatrix formed by the columns $v_1, \ldots, v_k$ and not containing any diagonal entries $s_{ii}$ or $t_{iii}$. Then, 
     $$rk(Q)\leq |pa(U)|.$$
\end{proposition}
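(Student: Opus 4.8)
The plan is to prove this as a direct extension of \cref{prop::Determinant_Parents}, exploiting that the second- and third-order discrete Lyapunov equations both express an off-diagonal entry with column index $v_\ell$ as a linear combination, with coefficients $a_{v_\ell m}$, of quantities indexed by the \emph{same} parent set $\mathrm{pa}(v_\ell)$. First I would record the coordinate form of the recursive equations \eqref{eq::DiscreteLyapunovRecursive}. Since $\Omega^{(2)}$ and $\Omega^{(3)}$ are diagonal, any off-diagonal entry (one whose error-cumulant term vanishes) satisfies
\begin{equation*}
s_{i v_\ell} = \sum_{m \in \mathrm{pa}(v_\ell)} a_{v_\ell m}\,(AS)_{im} \quad (i \neq v_\ell), \qquad t_{i j v_\ell} = \sum_{n \in \mathrm{pa}(v_\ell)} a_{v_\ell n}\,(T \times_1 A \times_2 A)_{ijn},
\end{equation*}
the latter whenever $(i,j,v_\ell)$ is not the fully diagonal index. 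In both cases $a_{v_\ell m} = 0$ unless $m \to v_\ell \in E$, which is exactly the restriction of the summation to $\mathrm{pa}(v_\ell)$.

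Next I would assemble, for each $m \in V$, a single column vector $c_m$ living in the stacked row space of $Q$: its entry in the $S$-block (row $i$) is $(AS)_{im}$, and its entry in the slice-$T_{i_0}$ block (row $j$) is $(T\times_1 A \times_2 A)_{i_0 j m}$. The two displayed identities then say precisely that the column of $Q$ indexed by $v_\ell$ equals $\sum_{m \in \mathrm{pa}(v_\ell)} a_{v_\ell m}\, c_m$, restricted to the rows retained in $Q$. Consequently every column of $Q$ lies in the span of $\{\, c_m : m \in \mathrm{pa}(U)\,\}$, a set of at most $|\mathrm{pa}(U)|$ vectors, and therefore $\mathrm{rk}(Q) \le |\mathrm{pa}(U)|$.

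The one point requiring care is the bookkeeping of which rows survive in $Q$: the identities above hold only on off-diagonal entries, so I must confirm that discarding the entries $s_{ii}$ and $t_{iii}$ from the selected columns leaves a fixed row set on which the homogeneous relation holds uniformly. Removing the $S$-block rows indexed by $U$ and, from each slice $T_{i_0}$ with $i_0 \in U$, the single row $i_0$ achieves this; every remaining entry of $Q$ then has a non-diagonal index and hence obeys the relation. I expect this indexing check to be the only genuine obstacle—the conceptual content is simply that stacking $S$ with the slices of $T$ does not enlarge the spanning set, since both blocks are governed by the same coefficients $a_{v_\ell m}$ ranging over the common parent set $\mathrm{pa}(U)$. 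This shared indexing is exactly what keeps the bound at $|\mathrm{pa}(U)|$ rather than doubling it, and it specializes back to \cref{prop::Determinant_Parents} upon retaining only the $S$-block or a single $T$-slice.
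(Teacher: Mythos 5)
Your proposal is correct and is essentially the paper's argument: the paper factors $S'$ and each slice $T_i'$ as $(A'K)^T$ with a common first factor $A'$ supported on $\mathrm{pa}(U)$ and then stacks the $K$ blocks, which is exactly your observation that every retained column of $Q$ lies in the span of the $|\mathrm{pa}(U)|$ stacked vectors $c_m$. Your explicit handling of which diagonal rows must be discarded so that the homogeneous recursions hold on all remaining entries is a point the paper leaves implicit, but the underlying decomposition is identical.
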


Note that this is a generalization of the previous proposition since $S'$ is a submatrix of $Q$. It follows that if we have a subset of vertices whose total number of parents is small, we can obtain polynomials in the vanishing ideal of the model by taking determinants of submatrices of the matrix $Q$.
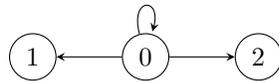
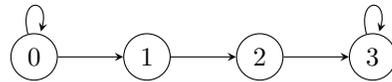
\begin{figure}[H]
    \centering
    \begin{subfigure}[t]{0.4\textwidth}
    \centering
    \begin{tikzpicture}
  \node[circle, draw, minimum size=0.5cm] (0) at (0,0) {0};
  \node[circle, draw, minimum size=0.5cm] (1) at (-1.5,0) {1};
  \node[circle, draw, minimum size=0.5cm] (2) at (1.5,0) {2};
  
  \draw[->,  >=stealth] (0) edge[loop above] (0);
  \draw[->,  >=stealth] (0) edge (1);
  \draw[->,  >=stealth] (0) edge (2);
\end{tikzpicture}
    \caption{Vertices 1 and 2 have one parent.}
    \label{fig:: 2 children}
    \end{subfigure}%
    \hspace{1cm}
    ~ 
    \begin{subfigure}[t]{0.4\textwidth}
    \centering
    \begin{tikzpicture}
  \node[circle, draw, minimum size=0.5cm] (0) at (0,0) {0};
  \node[circle, draw, minimum size=0.5cm] (1) at (1.5,0) {1};
  \node[circle, draw, minimum size=0.5cm] (2) at (3,0) {2};
  \node[circle, draw, minimum size=0.5cm] (3) at (4.5,0) {3};
  
  \draw[->,  >=stealth] (0) edge[loop above] (0);
  \draw[->,  >=stealth] (3) edge[loop above] (3);
  \draw[->,  >=stealth] (0) edge (1);
  \draw[->,  >=stealth] (1) edge (2);
  \draw[->,  >=stealth] (2) edge (3);
\end{tikzpicture} 
    \caption{Vertices 1 and 2 have one grandparent.} \label{fig:: 2 grandchildren} 
    \end{subfigure}
    \caption{Graphs with vanishing determinants.}
\end{figure}

\begin{example}
   Let $G$ be the graph in \cref{fig:: 2 children}, and let $U=\{1, 2\}$. Note that $|pa(U)|=1$. 
Consider the matrix $Q$ as defined in \cref{prop::Determinant_Parents_full}: 
\begin{equation*}
    Q^t = \begin{pmatrix}
        s_{01} & t_{001} & t_{011} & t_{012} & t_{011} & t_{112} & t_{012} & t_{112} \\
        s_{02} & t_{002} & t_{012} & t_{022} & t_{012} & t_{122} & t_{022} & t_{012}
    \end{pmatrix}.
\end{equation*}
Observe that some columns of the matrix above repeat due to the symmetries of the cumulant tensor. By the proposition, all $2\times2$ minors of $Q$ must vanish. However, these polynomials alone do not generate the whole ideal $\mathcal{I}^{\leq 3}_G$. 
\end{example}

\begin{proposition}
\label{prop::Determinant_Ancestors}
    Let $G=(V, E)$ be a graph on $p$ vertices, and let $U= \{v_1, \ldots, v_k\}$ be a subset of~$V$. Let $an_2(U)$ be the set of all grandparents of $U$, and $sib(U)$ to be the set of siblings: $$an_2(U)=\bigcup_{w\in pa(U)}pa(w), \quad sib(U) = \{w\in V: pa(w)\in pa(U)\}.$$
    Denote by $S'$ the $(p-|sib(U)|)\times k$ submatrix of $S$, formed by taking columns $v_1, \ldots, v_k$ and removing all rows containing entries $s_{ij}$ with $i, j\in sib(U)$. Then,
    $$rk(S')\leq |an_2(U)|.$$
    The same statement holds for all slices of the tensor $T$ as well. 
    Moreover, the statement holds for a matrix, formed by gluing $S$ and all slices of $T$ below each other (to form a $p\times (p+p^2)$ matrix).
\end{proposition}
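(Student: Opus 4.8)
The plan is to adapt the proof of \cref{prop::Determinant_Parents}, in which a single application of the second-order discrete Lyapunov equation \eqref{eq:DiscreteLyapunovEqSecondOrder} writes each off-diagonal column of $S$ as a combination of vectors indexed by $pa(U)$. Here I would apply the Lyapunov equation \emph{twice}, pushing the factorization from parents down to grandparents, and the deletion of the sibling rows will serve precisely to cancel the diagonal-noise contribution that the second application reintroduces.

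\textbf{The covariance matrix.} Writing \eqref{eq:DiscreteLyapunovEqSecondOrder} entrywise gives $s_{ij} = \sum_{k,l} a_{ik}a_{jl}s_{kl} + \delta_{ij}w^{(2)}_i$, since $\Omega^{(2)}$ is diagonal. Fixing a column $v_m \in U$ and a row $i \neq v_m$, the noise term drops out and, as $a_{v_m l}\neq 0$ only for $l \in pa(v_m)$, one gets $s_{iv_m} = \sum_{l\in pa(v_m)} a_{v_m l}(AS)_{il}$; this is the parents-level identity behind \cref{prop::Determinant_Parents}. Expanding $(AS)_{il}=\sum_k a_{ik}s_{kl}$ once more through \eqref{eq:DiscreteLyapunovEqSecondOrder} yields
\[
(AS)_{il} = \sum_{l'\in pa(l)} a_{ll'}(A^2S)_{il'} + a_{il}\,w^{(2)}_l,
\]
where the isolated term $a_{il}w^{(2)}_l$ is exactly where the diagonal noise re-enters. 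Substituting and reindexing, I obtain
\[
s_{iv_m} = \sum_{l'} (A^2)_{v_m l'}(A^2 S)_{il'} \;+\; \sum_{l\in pa(v_m)} a_{v_m l}\,a_{il}\,w^{(2)}_l .
\]
The residual noise sum is supported on $l \in pa(v_m)\cap pa(i)$, hence is nonzero only when $i$ shares a parent with some vertex of $U$, i.e. $i \in sib(U)$. Restricting to rows $i \in V\setminus sib(U)$ therefore annihilates it, leaving $s_{iv_m} = \sum_{l'\in an_2(U)} (A^2)_{v_m l'}(A^2 S)_{il'}$, because $(A^2)_{v_m l'}\neq 0$ forces $l'$ to be a grandparent of $v_m$. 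This displays every column of $S'$ as a linear combination of the at most $|an_2(U)|$ vectors $\{\,(A^2S)_{\cdot\, l'}\big|_{V\setminus sib(U)} : l'\in an_2(U)\,\}$, so $rk(S')\le |an_2(U)|$.

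\textbf{The tensor slices and the glued matrix.} For a slice $T_i$ I would run the identical two-step expansion on $t_{ijk}=\sum_{l,m,n} a_{il}a_{jm}a_{kn}t_{lmn}+\delta_{i=j=k}w^{(3)}_i$, keeping $i$ fixed, letting $j$ index the rows and $k=v_m\in U$ the columns. The coefficient multiplying the grandparent-indexed vector again comes out as $(A^2)_{v_m n'}$, identical in form to the $S$ computation, while the second-level noise term is now supported on $n\in pa(i)\cap pa(j)\cap pa(v_m)$; deleting rows $j\in sib(U)$ makes $pa(j)\cap pa(v_m)=\varnothing$ and so kills it, giving the same bound for each slice. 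For the stacked matrix of \cref{prop::Determinant_Parents_full}, the decisive point is that the coefficient $(A^2)_{v_m n'}$ depends only on the pair $(v_m,n')$ and not on whether we sit in the $S$-block or in a particular $T$-slice; consequently all blocks expand against one common family of \emph{stacked} vectors indexed by $an_2(U)$, so each column of the glued matrix lies in a space of dimension at most $|an_2(U)|$.

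\textbf{Main obstacle.} I expect the only real difficulty to be the careful bookkeeping of the noise terms: the second application of the Lyapunov equation reintroduces the diagonal error exactly once, and one must verify in all three settings ($S$, the slices $T_i$, and the glued matrix) that deleting the sibling rows is both necessary and sufficient to remove it, paying particular attention to the borderline index coincidences (such as $l'\in pa(v_m)$ or $i=j$) created by the presence of self-loops.
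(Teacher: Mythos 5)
Your proposal is correct and follows essentially the same route as the paper's proof: apply the entrywise Lyapunov recursion twice to factor each column of $S'$ (and of each slice $T_i'$, and of the glued matrix) through the common coefficient matrix $\bigl((A^2)_{v_m l'}\bigr)$ indexed by $an_2(U)$, which bounds the rank by $|an_2(U)|$. Your explicit bookkeeping of the reintroduced diagonal noise term supported on $pa(v_m)\cap pa(i)$, and the observation that deleting the $sib(U)$ rows is exactly what annihilates it, makes precise a point the paper's proof leaves implicit, but the underlying decomposition is identical.
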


\begin{example}
   Let $G$ be the graph in \cref{fig:: 2 grandchildren}, and let $U = \{1, 2\}$. Here, $|an_2(U)|=1$, and $|sib(U)|=2$. For the slice $T_3$, consider the submatrix $T_3'$ introduced in \cref{prop::Determinant_Ancestors}:
\begin{equation*}
    T_3' = \begin{pmatrix}
        t_{013} & t_{023}\\
        t_{113} & t_{123} \\
        t_{123} & t_{223} \\
        t_{133} & t_{233}
    \end{pmatrix}.
\end{equation*}
In this case, the matrix $T_3'$ has the same number of rows as $T_3$, as none of them need to be deleted. By \cref{prop::Determinant_Ancestors}, all $2\times 2$ minors of $T_3'$ vanish.
\end{example}

Finally, we show that the problem of computing the vanishing ideal of the model can be simplified by removing entries of the cumulants  indexed only by a sink. Here we call a vertex $v$ a \textit{sink} if it has no outgoing edges directed to other vertices.
\begin{proposition}
    Let $G = (V,E)$ be a directed graph, and consider the third-order vanishing ideal $\mathcal I^{\leq 3}_G \subseteq \mathbb{R}[s_{ij}, t_{ijk}\mid i, j, k\in V]$ of the corresponding discrete Lyapunov model. If $v$ is a sink in $G$, then there exists a generating set of $\mathcal I^{\leq 3}_G$ such that the variables $s_{vv}$ and $t_{vvv}$ do not appear in any generator.
\end{proposition}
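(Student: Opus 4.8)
The plan is to exploit the fact that, because $v$ is a sink, the noise parameters $w^{(2)}_v$ and $w^{(3)}_v$ are ``private'' to the cumulant entries $s_{vv}$ and $t_{vvv}$: they influence no other entry of $S$ or $T$, and each of $s_{vv}, t_{vvv}$ depends on its parameter through an invertible affine map. This makes $s_{vv}$ and $t_{vvv}$ freely adjustable coordinates of the model, so that $\overline{\mathcal{M}_G^{\leq 3}}$ is a cylinder over its projection in these two directions; the vanishing ideal of such a cylinder is generated in the complementary variables.

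First I would establish the isolation using the trek rule of \cref{cor::trekrulefor2ndand3rd}. Since $v$ is a sink, every directed path leaving $v$ (using only the self-loop at $v$, if present) remains at $v$, so $v$ is the only vertex reachable from $v$. Hence the only equitrek with top node $v$ between two (resp.\ three) sinks is the one all of whose sinks equal $v$. By \cref{cor::trekrulefor2ndand3rd} this means $w^{(2)}_v$ occurs only in $s_{vv}$ among the second-order cumulants, and since third-order cumulants involve only the parameters $w^{(3)}$ it occurs in no entry of $T$; symmetrically $w^{(3)}_v$ occurs only in $t_{vvv}$ and in no entry of $S$. Summing the self-loop contributions yields
\begin{equation*}
s_{vv} = \frac{w^{(2)}_v}{1-a_{vv}^2} + c_2, \qquad t_{vvv} = \frac{w^{(3)}_v}{1-a_{vv}^3} + c_3,
\end{equation*}
where $c_2$ and $c_3$ depend only on parameters other than $w^{(2)}_v, w^{(3)}_v$. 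As $A$ is Schur stable, $a_{vv}^2 \neq 1$ and $a_{vv}^3 \neq 1$, so for every fixed choice of the remaining parameters the maps $w^{(2)}_v \mapsto s_{vv}$ and $w^{(3)}_v \mapsto t_{vvv}$ are invertible affine functions.

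Next I would turn this into the ideal statement. Let $f \in \mathcal{I}^{\leq 3}_G$ and expand $f = \sum_{d,e} f_{d,e}\, s_{vv}^{\,d}\, t_{vvv}^{\,e}$, where each coefficient $f_{d,e}$ is a polynomial in the cumulant variables other than $s_{vv}, t_{vvv}$. Fix any point $p$ in the projection of $\mathcal{M}_G^{\leq 3}$ that forgets $s_{vv}$ and $t_{vvv}$, and choose a parameter tuple realizing it. Varying $w^{(2)}_v$ over $\mathbb{R}_+$ and $w^{(3)}_v$ over $\mathbb{R}\setminus\{0\}$ leaves $p$ unchanged while sweeping $(s_{vv}, t_{vvv})$ over a Zariski-dense subset of the plane, by the invertible affine formulas above. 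Since $f$ vanishes on $\mathcal{M}_G^{\leq 3}$, the polynomial $\sum_{d,e} f_{d,e}(p)\, s_{vv}^{\,d}\, t_{vvv}^{\,e}$ in the two variables $s_{vv}, t_{vvv}$ vanishes on this dense set and hence identically, forcing $f_{d,e}(p) = 0$ for all $d,e$. As $p$ was an arbitrary point of the projection, each $f_{d,e}$ lies in $\mathcal{I}^{\leq 3}_G$ and is free of $s_{vv}, t_{vvv}$. Thus every $f \in \mathcal{I}^{\leq 3}_G$ is an $\mathbb{R}[s_{ij}, t_{ijk}]$-combination of elements of $\mathcal{I}^{\leq 3}_G$ that do not involve $s_{vv}$ or $t_{vvv}$; by the Hilbert basis theorem a finite such subset already generates, which gives the desired generating set.

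The one point requiring care—and the main obstacle—is the simultaneous freeness claim: I must ensure that varying $w^{(2)}_v$ and $w^{(3)}_v$ moves only the coordinates $s_{vv}$ and $t_{vvv}$ and nothing else, which is exactly the content of the trek-rule isolation in the second step, and that the swept region is genuinely Zariski-dense in the plane so that a polynomial identity on it holds everywhere. Both follow once the sink property is used to show that $v$ has no proper descendants; the remainder is a routine coefficient comparison.
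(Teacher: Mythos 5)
Your proof is correct and follows essentially the same strategy as the paper's: both exploit that, because $v$ is a sink, the noise parameters $w^{(2)}_v$ and $w^{(3)}_v$ enter only $s_{vv}$ and $t_{vvv}$ (affinely and invertibly), and then compare coefficients of a polynomial identity obtained by letting these free parameters vary. The only cosmetic difference is that you treat $s_{vv}$ and $t_{vvv}$ simultaneously via a two-variable Zariski-density argument, whereas the paper eliminates them one at a time with a single-variable polynomial identity.
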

\begin{proof}
    Suppose that $f=\sum_{t=0}^k s_{vv}^tf_t$ belongs to a generating set, and assume that no terms of $f_t$ can be divided by $s_{vv}$. Using the trek rule, $f$ can be written as an element of $\mathbb{R}(a_{ij}, \omega^{(2)}_i, \omega^{(3)}_i,\, i,j\in V)$. Now assign arbitrary values to $a_{ij}$ and $\omega^{(3)}_i$ for all $i,j\in V$ and to $\omega^{(2)}_i$ for all $i\in V\setminus \{v\}$ from the parameter space $\Theta$. Denote the resulting polynomial as $g\in \mathbb{R}[\omega^{(2)}_v]$. Note that $g$ has degree $k$. Since $f\in \mathcal I^{\leq 3}_G$, it must vanish for all assignments of the variables $a_{ij}, \omega^{(2)}_i$, $\omega^{(3)}_i$. Thus, $g(\omega^{(2)}_v)=0$ for all $\omega^{(2)}_v\in \mathbb{R}$. It follows that $g=0$, and hence all its coefficients are 0. The coefficients are evaluations of $f_t$ or $\frac{f_t}{(1-a_{vv})^t}$, depending on whether there is a self-loop at $v$. Thus, $f_t \in \mathcal I^{\leq 3}_G$, and we can add them to the generating set instead of $f$. We can perform a similar process to get generators not involving~$t_{vvv}$.
\end{proof}

\begin{corollary}
    The elimination of $s_{vv}$ and $t_{vvv}$ does not change the vanishing ideal corresponding to $G$ in case $v$ is a sink of $G$.
\end{corollary}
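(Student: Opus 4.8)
The plan is to deduce the corollary immediately from the preceding proposition, which furnishes a generating set of $\mathcal I^{\leq 3}_G$ in which neither $s_{vv}$ nor $t_{vvv}$ appears. Write $R=\mathbb{R}[s_{ij},t_{ijk}\mid i,j,k\in V]$ for the ambient ring and let $R'\subset R$ be the subring on all remaining variables, i.e. all except $s_{vv}$ and $t_{vvv}$. Eliminating $s_{vv}$ and $t_{vvv}$ means passing to the elimination ideal $\mathcal I^{\leq 3}_G\cap R'$, and the corollary asserts that this loses no information: the extension $(\mathcal I^{\leq 3}_G\cap R')\,R$ should again equal $\mathcal I^{\leq 3}_G$.

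The core step is short. Fix generators $g_1,\dots,g_m\in R'$ with $\mathcal I^{\leq 3}_G=\langle g_1,\dots,g_m\rangle_R$, as provided by the proposition. Each $g_\ell$ lies in both $R'$ and $\mathcal I^{\leq 3}_G$, hence in $\mathcal I^{\leq 3}_G\cap R'$, so
\[
\mathcal I^{\leq 3}_G=\langle g_1,\dots,g_m\rangle_R\subseteq(\mathcal I^{\leq 3}_G\cap R')\,R\subseteq\mathcal I^{\leq 3}_G,
\]
which forces equality throughout. To pin down the elimination ideal itself inside $R'$, I would further introduce the substitution homomorphism $\psi\colon R\to R'$ sending $s_{vv}\mapsto 0$, $t_{vvv}\mapsto 0$ and fixing all other variables: for $f\in\mathcal I^{\leq 3}_G\cap R'$ with $f=\sum_\ell h_\ell g_\ell$, applying $\psi$ (which fixes $f$ and every $g_\ell$, as they already lie in $R'$) gives $f=\sum_\ell\psi(h_\ell)g_\ell$, so $\mathcal I^{\leq 3}_G\cap R'=\langle g_1,\dots,g_m\rangle_{R'}$.

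I do not expect any genuine obstacle here, since the corollary is a purely formal extension--contraction consequence of the proposition. The sink hypothesis plays no further role at this stage: it has already done its work inside the proposition, where the facts that $\omega^{(2)}_v$ and $\omega^{(3)}_v$ feed only into $s_{vv}$ and $t_{vvv}$---because $v$ has no outgoing edge to another vertex---are exactly what permitted those two variables to be removed from the generators. The only point to check is the routine bookkeeping that $\psi$ is a well-defined ring map into $R'$ fixing $f$ and each $g_\ell$.
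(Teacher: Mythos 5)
Your proposal is correct and matches the paper's (implicit) reasoning: the paper states this corollary without further proof precisely because it is the formal extension--contraction consequence of the preceding proposition, which you spell out via the retraction $\psi$ setting $s_{vv}$ and $t_{vvv}$ to zero. Your added verification that $\mathcal I^{\leq 3}_G\cap R'=\langle g_1,\dots,g_m\rangle_{R'}$ is a standard and valid way to make the elimination statement precise.
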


\subsection{Defining Polynomials via Birational Implicitization} \label{sec:BirationalImplicitization}

In \cite{boege2024real}, the authors develop a strategy for efficiently computing the polynomials and semialgebraic constraints that define a statistical model under strict assumptions on its parametrization. In particular, their approach requires that the parametrization be \emph{ambirational}. 

\begin{definition}
    Let $\alpha: \R^m \rightarrow \R^n$ be a rational function, $\Theta \subset \R^m$ a parameter space, and $\Mcal = \alpha(\Theta)$ a parametric statistical model. If $\alpha$ admits a rational inverse $\beta: \R^n \dashrightarrow \R^m$, then $\Mcal$ is \emph{ambirational}.
\end{definition}

In particular, this means that $\Theta$ and $\Mcal$ are birationally equivalent, and the rational functions realizing this equivalence are also isomorphisms of their respective ambient spaces. In this setting, one can apply \cite[Theorem~3.10]{boege2024real} to use the polynomial constraints on the parameter space $\Theta$ to derive polynomial constraints on $\Mcal$. We have shown in  \cref{prop:IdAllSelfLoops} that when $G$ is a DAG with all self-loops, the map from the parameter space consisting of the matrix $A$ and cumulants $K_2, K_3$ and $K_4$ to the space of 2nd, 3rd and 4th order cumulants admits a rational inverse. We have not shown that this map is ambirational; indeed,  \cref{prop:IdAllSelfLoops} shows that for a DAG $G$ with all the self loops, for generic cumulants in $\Mcal_G^{\leq 4}$ the parameters $A$ and $\Omega^{(m)}$ can be found as a rational function of the cumulants. This does not rule out that possibility that the map from parameter space to the cumulants is somewhere many-to-one; in general, these models are generically identifiable but not globally identifiable. 
Nevertheless, we can attempt to apply the methods of \cite{boege2024real} to compute candidates for defining polynomials of the model without the guarantee that the resulting polynomials will actually belong to the vanishing ideal.

\begin{figure}
\centering
    \begin{tikzpicture}
  \node[circle, draw, minimum size=0.5cm] (0) at (-1.5,0) {0};
  \node[circle, draw, minimum size=0.5cm] (1) at (0,0) {1};
  \node[circle, draw, minimum size=0.5cm] (2) at (1.5,0) {2};
  
  \draw[->,  >=stealth] (0) edge[loop above] (0);
  \draw[->,  >=stealth] (1) edge[loop above] (1);
  \draw[->,  >=stealth] (2) edge[loop above] (2);
  \draw[->,  >=stealth] (0) edge (1);
  \draw[->,  >=stealth] (1) edge (2);
\end{tikzpicture} \caption{A directed path with all self-loops}\label{fig:BirationalImplicitization}
\end{figure}
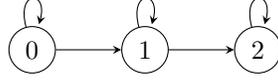
Consider the directed path $G$ with  all self-loops pictured in \cref{fig:BirationalImplicitization}. Given generic second-, third- and fourth-order cumulants in $\Mcal_G^{\leq 4}$,  \cref{prop:IdAllSelfLoops} allows us to solve for the adjacency matrix $A$ as a rational function of these cumulants. This also gives an expression for $a_{20}$ in terms of the cumulants; but since $A$ is a weighted adjacency matrix of $G$, the $(2,0)$ entry of $A$ should be zero. If the map which sends $\Mcal_G^{\leq 4}$ to the matrix $A$ were ambirational, then by \cite[Theorem~3.10]{boege2024real}, the numerator of $a_{20}$ as a function of the second-, third- and fourth-order cumulants belongs to the vanishing ideal $\mathcal{I}^{\leq 4}(G).$ Using Macaulay2, we compute that this numerator has two irreducible factors, $a_{20} = f_1f_2$. If we assume that $a_{20} \in \mathcal{I}^{\leq 4}(G)$, then since this ideal is prime, one of $f_1$ or $f_2$ must belong to $\mathcal{I}^{\leq 4}(G).$ By generating random cumulants in $\Mcal^{\leq 4}_G$ and substituting them into these polynomials, we find that the one that vanishes on these random cumulants is the following polynomial, $f_1$: 

\small{
\begin{align*}
&s_{00}s_{01}^3s_{02}s_{11}t_{001}^4t_{002}r_{0000}^3&-&
s_{00}s_{01}^4s_{12}t_{001}^4t_{002}r_{0000}^3&-&
s_{01}^4s_{02}^2t_{000}t_{001}^4r_{0000}^2r_{0001} & \\
+&s_{00}s_{01}^3s_{02}s_{12}t_{000}t_{001}^4r_{0000}^2r_{0001}&+&
s_{00}s_{01}^3s_{02}^2t_{001}^5r_{0000}^2r_{0001}&-&
s_{00}^2s_{01}^2s_{02}s_{12}t_{001}^5r_{0000}^2r_{0001}& \\
-&s_{01}^4s_{02}s_{11}t_{000}^2t_{001}^2t_{002}r_{0000}^2r_{0001}&+&
s_{01}^5s_{12}t_{000}^2t_{001}^2t_{002}r_{0000}^2r_{0001}&+&
s_{01}^5s_{02}t_{000}t_{001}^3t_{002}r_{0000}^2r_{0001}& \\
-&2s_{00}s_{01}^3s_{02}s_{11}t_{000}t_{001}^3t_{002}r_{0000}^2r_{0001}&+&
s_{00}s_{01}^4s_{12}t_{000}t_{001}^3t_{002}r_{0000}^2r_{0001}&-&
s_{00}s_{01}^4s_{02}t_{001}^4t_{002}r_{0000}^2r_{0001}& \\
+&s_{00}^2s_{01}^3s_{12}t_{001}^4t_{002}r_{0000}^2r_{0001}&-&
s_{01}^4s_{02}s_{12}t_{000}^3t_{001}^2r_{0000}r_{0001}^2&+&
2s_{01}^4s_{02}^2t_{000}^2t_{001}^3r_{0000}r_{0001}^2& \\
+&s_{00}s_{01}^3s_{02}s_{12}t_{000}^2t_{001}^3r_{0000}r_{0001}^2&-&
2\,s_{00}s_{01}^3s_{02}^2t_{000}t_{001}^4r_{0000}r_{0001}^2&-&
s_{00}^2s_{01}^2s_{02}s_{12}t_{000}t_{001}^4r_{0000}r_{0001}^2&\\
+&s_{00}^3s_{01}s_{02}s_{12}t_{001}^5r_{0000}r_{0001}^2&+&
2s_{01}^4s_{02}s_{11}t_{000}^3t_{001}t_{002}r_{0000}r_{0001}^2&-&
s_{01}^5s_{12}t_{000}^3t_{001}t_{002}r_{0000}r_{0001}^2&\\
-&2s_{01}^5s_{02}t_{000}^2t_{001}^2t_{002}r_{0000}r_{0001}^2&+&
s_{00}s_{01}^3s_{02}s_{11}t_{000}^2t_{001}^2t_{002}r_{0000}r_{0001}^2&-&
s_{00}s_{01}^4s_{12}t_{000}^2t_{001}^2t_{002}r_{0000}r_{0001}^2& \\
+&2s_{00}s_{01}^4s_{02}t_{000}t_{001}^3t_{002}r_{0000}r_{0001}^2&+&
s_{00}^2s_{01}^3s_{12}t_{000}t_{001}^3t_{002}r_{0000}r_{0001}^2&-&
s_{00}^3s_{01}^2s_{12}t_{001}^4t_{002}r_{0000}r_{0001}^2& \\
-&s_{01}^4s_{02}^2t_{000}^3t_{001}^2r_{0001}^3&+&
2s_{00}s_{01}^3s_{02}s_{12}t_{000}^3t_{001}^2r_{0001}^3&+&
s_{00}s_{01}^3s_{02}^2t_{000}^2t_{001}^3r_{0001}^3& \\
-&4s_{00}^2s_{01}^2s_{02}s_{12}t_{000}^2t_{001}^3r_{0001}^3&+&
3s_{00}^3s_{01}s_{02}s_{12}t_{000}t_{001}^4r_{0001}^3&-&
s_{00}^4s_{02}s_{12}t_{001}^5r_{0001}^3& \\
-&s_{01}^4s_{02}s_{11}t_{000}^4t_{002}r_{0001}^3&+&
s_{01}^5s_{12}t_{000}^4t_{002}r_{0001}^3&+&
s_{01}^5s_{02}t_{000}^3t_{001}t_{002}r_{0001}^3& \\
-&2s_{00}s_{01}^4s_{12}t_{000}^3t_{001}t_{002}r_{0001}^3&-&
s_{00}s_{01}^4s_{02}t_{000}^2t_{001}^2t_{002}r_{0001}^3&+&
4s_{00}^2s_{01}^3s_{12}t_{000}^2t_{001}^2t_{002}r_{0001}^3& \\
-&3s_{00}^3s_{01}^2s_{12}t_{000}t_{001}^3t_{002}r_{0001}^3&+&
s_{00}^4s_{01}s_{12}t_{001}^4t_{002}r_{0001}^3 &&&&
\end{align*}}

\normalsize

The fact that $f_1$ vanishes on cumulants in the model generated from random values of the entries of $A$ is strong evidence that $f_1$ belongs to the vanishing ideal of the model. Indeed, if $f_1$ were not in $\mathcal{I}^{\leq 4}(G)$, then $f_1$ would be nonzero on almost all cumulants in $\Mcal^{\leq 4}_G$. We leave both a statistical interpretation of this polynomial and a proof that it belongs to $\mathcal{I}^{\leq 4}(G)$ as directions for future research. This computation also leads us to the question: under what circumstances can the requirement of ambirationality in \cite{boege2024real} be relaxed?

\section{Discussion}
In this paper, we presented a first study of discrete Lyapunov models with non-Gaussian errors. The non-Gaussianity gives rise to non-Gaussian equilibrium distributions, allowing us to consider not only its covariance matrix but also its higher-order cumulants. We showed that the entries of the cumulants of the equilibrium distribution can be expressed combinatorially via {\em equitreks} in the graph. 

This combinatorial interpretation allowed us to derive several parameter identifiability results. In particular, we provided generic identifiability from second-, third-, and fourth-order cumulants results for DAGs with self-loops at each node (Section~\ref{sec::ParameterIdentifiabilityDAGs}). Furthermore, we showed local identifiability for all directed graphs containing a self-loop at each node and no isolated noted (Section~\ref{sec:local identifiability}). Finally, we described some of the equations characterizing the implicit description of the model in Section~\ref{sec::Equations}.

This work is only the first study of such models, and numerous questions remain open. Most notably, these models can be used in causal discovery, and, therefore, one of the main problems is to design algorithms for learning the graph from samples of the equilibrium distribution. One of the ways this could be approached is by discovering more equations that vanish on the model for each graph, and then testing such equations on the sample cumulants obtained from data. We here have only discovered some of the equations that vanish on the model for some special graphs, and it is still an open problem to characterize the full vanishing ideal of the model.

\bigskip
\textbf{Funding.} This project originated at the Workshop for Women in Algebraic Statistics, held at St John's College, Oxford from July 8 to July 18, 2024. The workshop was supported by St John's College, Oxford, the L'Oreal-UNESCO For Women in Science UK and Ireland Rising Talent Award in Mathematics and Computer Science (awarded to Jane Coons), the Heilbronn Institute for Mathematical Research, and the UKRI/EPSRC Additional Funding Programme for the Mathematical Sciences.

Elina  Robeva was also supported by a Canada CIFAR AI Chair. Cecilie Olesen Recke was also supported by Novo Nordisk Foundation Grant NNF20OC0062897. Sarah Lumpp was also supported by the European Research Council (ERC) under the European Union’s Horizon 2020 research and innovation programme (grant agreement No 883818) as well as the DAAD programme Konrad Zuse Schools of Excellence in Artificial Intelligence, sponsored by the Federal Ministry of Research, Technology and Space.

\bibliographystyle{alpha}
\bibliography{DiscreteLyapunov}

\newpage

\appendix

\section{Proofs for \texorpdfstring{\cref{sec::TrekParametrization}}{Section 3}}\label{sec:proofsTrekParametrization}

\subsection{Proof of \texorpdfstring{\cref{prop::trekrule}}{Proposition 3.2}}\label{app:Proof_Prop_3.2}

\begin{proof}
Spelling out the $l$-th mode products from  \cref{prop:nthorder}, we obtain 
\begin{align*}
    (T_n)_{i_1\ldots i_n} & = \sum_{l=0}^{\infty} \sum_{1\leq r_1,\ldots, r_n\leq p} (\Omega^{(n)})_{r_1, \ldots, r_n}(A^l)_{i_1r_1}\cdots(A^l)_{i_nr_n}\\
    & = \sum_{l=0}^{\infty} \sum_{r=1}^p w^{(n)}_{r}(A^l)_{i_1r}\cdots(A^l)_{i_nr} = \sum_{r=1}^p w^{(n)}_{r} \sum_{l=0}^{\infty}(A^l)_{i_1r}\cdots(A^l)_{i_nr},
\end{align*}
where in the second equality we used that $\Omega^{(n)}$ is diagonal.
Further note that $$(A^l)_{ij} = \sum_{P\in P_l(j, i)} a^P,$$ where $P_l(j, i)$ denotes the set of all paths from $j$ to $i$ of length $l$.
Substituting this representation yields 
\begin{align*}
    (T_n)_{i_1\ldots i_n} & = \sum_{r=1}^p w^{(n)}_{r} \sum_{l=0}^{\infty}\left( \sum_{P_1\in P_l(r, i_1)} a^P_1\right)\ldots \left(\sum_{P_n\in P_l(r, i_n)} a^P_n\right) \\
    & = \sum_{\tau = (\tau_1, \ldots, \tau_n) \in \mathcal{T}(i_1,\dots,i_n)} w^{(n)}_{top(\tau)} a^{\tau_1} \ldots a^{\tau_n}. \qedhere
\end{align*}
\end{proof}

\subsection{Proof of \texorpdfstring{\cref{prop:trek_rule_dag}}{Proposition 3.7}}\label{app:Proof_Prop_3.7}

The proof of \cref{prop:trek_rule_dag} is based on induction by employing the recursive formula for the entries of $S$. Induction then basically means adding an edge on one or both sides of the considered base trek. Thus, the main challenge is to correctly account for the added possibilities of adding self-loops along the the trek, which is expressed in the following recursive properties of the rational coefficients $C(x,y;t)$ and the polynomials $p_{x,y}(t)$. 

\begin{lemma} \label{lem:recursion_trek_rule_dag}
    Let $0 \leq x \leq y$. The polynomial $p$ and the rational function $C$ satisfy the following recursive relations:
    \begin{enumerate}
        \item[(i)] $p_{x+1,y+1}(t) = t^2 p_{x,y+1}(t) 
        + (1+(t^2-1)\delta_{xy}) p_{x+1,y}(t) 
        + (1-t^2)p_{x,y}(t)$;
        \item[(ii)] $C(x+1,y+1;t) = \frac{1}{1-t^2}\left(t(C(x,y+1;t) + C(x+1,y;t)) + C(x,y;t)\right)$. 
    \end{enumerate}
\end{lemma}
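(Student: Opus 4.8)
The plan is to prove the two recursions by first reducing both of them to a single clean recursion for an auxiliary \emph{symmetric} polynomial, and then transferring that recursion back to $p$ and $C$. A useful preliminary observation is that (i) and (ii) are in fact equivalent once one substitutes the definition $C(x,y;t) = t^{|x-y|}p_{x,y}(t)/(1-t^2)^{x+y+1}$, so it suffices to establish (i) directly and then deduce (ii) algebraically.

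First I would introduce the symmetric polynomial $q_{x,y}(t) = \sum_{l \ge 0} t^{2l}\binom{x}{l}\binom{y}{l}$ and record the reciprocal-polynomial relation $p_{x,y}(t) = t^{2\min(x,y)}\,q_{x,y}(1/t)$. This follows from the two equivalent expressions for $p_{x,y}$ stated just after the proposition, via the index substitution $l \mapsto \min(x,y)-l$ together with $\binom{n}{k}=\binom{n}{n-k}$. The point of passing to $q$ is that it carries no $\min/\max$ bookkeeping, which makes the core computation transparent. I would then prove the clean recursion
\[
q_{x+1,y+1}(t) = q_{x,y+1}(t) + q_{x+1,y}(t) + (t^2-1)\,q_{x,y}(t)
\]
by expanding $\binom{x+1}{l}\binom{y+1}{l}$ with Pascal's rule in both factors. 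The two diagonal terms combine (after an index shift $l \mapsto l-1$) into $(1+t^2)q_{x,y}$, while the two mixed cross sums $\sum_l t^{2l}\binom{x}{l-1}\binom{y}{l}$ and $\sum_l t^{2l}\binom{x}{l}\binom{y}{l-1}$ are recognized as $q_{x+1,y}-q_{x,y}$ and $q_{x,y+1}-q_{x,y}$ respectively, and everything collapses to the displayed identity.

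Next I would transfer this to $p$ by evaluating the $q$-recursion at $1/t$ and multiplying through by $t^{2x+2}$ (under the standing assumption $0 \le x \le y$). Three of the four resulting terms become $p_{x+1,y+1}$, $t^2 p_{x,y+1}$, and $(1-t^2)p_{x,y}$ immediately, because the relevant minima are $x+1$, $x$, and $x$. \textbf{The one delicate term, and the main obstacle, is} $t^{2x+2}q_{x+1,y}(1/t)$: converting it to $p_{x+1,y}$ requires knowing $\min(x+1,y)$, which equals $x+1$ when $x<y$ but drops to $x$ when $x=y$, producing exactly one surplus factor of $t^2$ in the equality case. This is precisely the content of the coefficient $1+(t^2-1)\delta_{xy}$ in (i), so the whole subtlety of the statement is concentrated in carefully tracking this single case distinction.

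Finally, I would deduce (ii) from (i). Substituting the definition of $C$ into the asserted recursion, clearing the common denominator $(1-t^2)^{x+y+3}$ and the common factor $t^{y-x}$, reduces (ii) to an identity among the four polynomials $p_{x+1,y+1}$, $p_{x,y+1}$, $p_{x+1,y}$, $p_{x,y}$. Here the absolute value $|x+1-y|$ entering $C(x+1,y;t)$ contributes a net exponent that is $0$ when $x<y$ and $2$ when $x=y$, i.e. the coefficient $1+(t^2-1)\delta_{xy}$ again, so the resulting identity is exactly (i). Thus (ii) is equivalent to (i), and the entire lemma rests on the Pascal-rule recursion for $q$ together with the reciprocal relation between $p$ and $q$.
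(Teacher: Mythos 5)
Your proposal is correct, and for part (i) it takes a genuinely different route from the paper. The paper proves (i) by splitting into two cases: for $x<y$ it identifies $p_{x,y}(t)$ with the coefficient of $s^x$ in $(1+s)^{y}(1+st^2)^{x}$ and verifies the stronger polynomial identity $(1+s)^{y+1}(1+st^2)^{x+1}=st^2(1+s)^{y+1}(1+st^2)^x+(1+s)^y(1+st^2)^{x+1}+s(1-t^2)(1+s)^y(1+st^2)^x$ (a Chu--Vandermonde-style argument), while the diagonal case $x=y$ is handled separately by an index shift and the binomial identity $2\binom{x+1}{k}\binom{x}{k-1}-\binom{x}{k-1}^2+\binom{x}{k}^2=\binom{x+1}{k}^2$. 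You instead prove the single uniform recursion $q_{x+1,y+1}=q_{x,y+1}+q_{x+1,y}+(t^2-1)q_{x,y}$ for the symmetric polynomial $q_{x,y}(t)=\sum_l t^{2l}\binom{x}{l}\binom{y}{l}$ by applying Pascal's rule in both factors, and then transfer it to $p$ via the reciprocal relation $p_{x,y}(t)=t^{2\min(x,y)}q_{x,y}(1/t)$; I checked that the cross-term identifications ($\sum_l t^{2l}\binom{x}{l}\binom{y}{l-1}=q_{x,y+1}-q_{x,y}$, etc.) and the exponent bookkeeping for $t^{2(x+1)}q_{x+1,y}(1/t)$ all work out, with the factor $1+(t^2-1)\delta_{xy}$ emerging exactly from whether $\min(x+1,y)$ equals $x+1$ or $x$. (Note your reciprocal relation is the one consistent with the paper's first expression for $p_{x,y}$; the paper's second displayed expression has a sign typo in the exponent, $2(l-\min(x,y))$ where it should be $2(\min(x,y)-l)$, which you have implicitly corrected.) Your approach buys a case-free core computation with the $\delta_{xy}$ subtlety isolated to a single line; the paper's generating-function view buys a closer connection to the row-generating-function material and integer-sequence identities developed later in the appendix. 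For part (ii) both arguments are essentially identical: substitute the definition of $C$, clear the common denominator and power of $t$, and track $|x+1-y|$ to recover the coefficient $1+(t^2-1)\delta_{xy}$, reducing (ii) to (i).
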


\begin{proof}
    $(i)$ We prove this using a similar strategy as in the proof of the Chu-Vandermonde identity by looking at coefficients in the expansion of the binomial formula.
    
    First observe that $p_{x,y}(t)$ is equal to the coefficient of $s^x$ in the polynomial $(1+s)^{y}(1+st^2)^x$ by writing out the appropriate binomial expansion. Thus, if we can prove the stronger claim of a corresponding recursive equality for these polynomials, it follows in particular that the coefficients of $s^x$ are equal. This strategy works only when $x < y$, so the proof is now split into the two cases $x < y$ and $x = y$. 

    \textbf{Case 1:} For $x < y$, the recursive formula simplifies to
    \begin{equation*}
        p_{x+1,y+1}(t) = t^2 p_{x,y+1}(t) 
        +  p_{x+1,y}(t) 
        + (1-t^2)p_{x,y}(t). 
    \end{equation*}
    
    By the explained correspondence, $p_{x,y+1}(t)$ is the coefficient of $s^x$ in $(1+s)^{y+1}(1+st^2)^x$, $p_{x+1,y}(t)$ is the coefficient of $s^{x+1}$ in $(1+s)^{y}(1+st^2)^{x+1}$, $p_{x,y}(t)$ is the coefficient of $s^{x}$ in $(1+s)^{y}(1+st^2)^x$, and $p_{x+1,y+1}(t)$ is the coefficient of $s^{x+1}$ in $(1+s)^{y+1}(1+st^2)^{x+1}$. Therefore, it suffices to prove the polynomial identity
    \begin{equation*}
         (1+s)^{y+1}(1+st^2)^{x+1} = s t^2  (1+s)^{y+1}(1+st^2)^x
        +  (1+s)^{y}(1+st^2)^{x+1}
        + s (1-t^2) (1+s)^{y}(1+st^2)^x.
    \end{equation*}
    Rewriting the right-hand side yields 
    \begin{align*}
        &s t^2  (1+s)^{y+1}(1+st^2)^x
        +  (1+s)^{y}(1+st^2)^{x+1}
        + s (1-t^2) (1+s)^{y}(1+st^2)^x \\ 
        &= (1+s)^{y}(1+st^2)^{x} (st^2(1+s) + (1+st^2) + s(1-t^2)) \\
        &=(1+s)^y(1+st^2)^x(1 + s + st^2 +  s^2t^2) =  (1+s)^{y+1}(1+st^2)^{x+1},
    \end{align*}
    which proves the required polynomial equality, concluding this case. 

    \textbf{Case 2:} 
    For $x = y$, we write out the definition of $p$ directly to prove the result. Since $p_{x,y}$ is symmetric in $x$ and $y$, the recursive formula to prove becomes 
    \begin{equation*}
        p_{x+1,x+1}(t) = 2 t^2 p_{x,x+1}(t) 
        + (1-t^2)p_{x,x}(t).
    \end{equation*}
    Using the definition to write out the right-hand side yields
    \begin{align*}
         2 t^2 p_{x,x+1}(t) 
        + (1-t^2)p_{x,x}(t) &=\sum_{l = 0}^{x} t^{2(l+1)} \left( 2\binom{x+1}{x-l}\binom{x}{l} -  \binom{x}{l}^2\right) + \sum_{l = 0}^{x} t^{2l}\binom{x}{l}^2 \\ 
         &= \sum_{k = 1}^{x+1} t^{2k} \left( 2\binom{x+1}{x-(k-1)}\binom{x}{k-1} - \binom{x}{k-1}^2\right) + 
         \sum_{l = 0}^{x} t^{2l}\binom{x}{l}^2 \\ 
         &= \sum_{k = 1}^{x} t^{2k} \left( 2\binom{x+1}{x-(k-1)}\binom{x}{k-1} -  \binom{x}{k-1}^2 + \binom{x}{k}^2 \right) + t^{2(x+1)} + 1, 
    \end{align*}
    by shifting the index of the first sum. Note that the last two terms outside of the sum correspond to the extreme terms in $p_{x+1,x+1}(t)$. In order to prove the recursive identity, the only thing left to compute is  
    \begin{align*}
        2\binom{x+1}{k}\binom{x}{k-1} - \binom{x}{k-1}^2 + \binom{x}{k}^2 &= \binom{x+1}{k}^2 \frac{2k}{x+1} -  \left[\binom{x+1}{k} \frac{k}{x+1}\right]^2 + \left[\binom{x+1}{k}\frac{x+1-k}{x+1}\right]^2 \\
        &= \binom{x+1}{k}^2 
        \left[\frac{2k(x+1)}{(x+1)^2} - \frac{k^2}{(x+1)^2} + \frac{(x+1-k)^2}{(x+1)^2} \right] \\
        &= \binom{x+1}{k}^2 
    \end{align*}
    to obtain the formula for $p_{x+1,x+1}(t)$.
    
    $(ii)$ Writing out the definitions on the right-hand side, we obtain
    \begin{align*}
        & \frac{1}{1-t^2}\left(t(C(x,y+1;t) + C(x+1,y;t)) + C(x,y;t)\right) \\
        &=   t^{|x-(y+1)|+1} \frac{p_{x,y+1}(t)}{(1-t^2)^{x+y+3}} + t^{|(x+1)-y|+1} \frac{p_{x+1,y}(t)}{(1-t^2)^{x+y+3}} + t^{|x-y|} \frac{(1-t^2) p_{x,y}(t)}{(1-t^2)^{x+y+3}} \\
        &= \frac{t^{|(x+1)-(y+1)|}}{(1-t^2)^{(x+1)+(y+1)+1}} 
        \left(t^2 p_{x,y+1}(t) 
        + (1+(t^2-1)\delta_{xy}) p_{x+1,y}(t) 
        + (1-t^2)p_{x,y}(t) \right) \\
        &= t^{|(x+1)-(y+1)|} \frac{p_{x+1,y+1}(t)}{(1-t^2)^{(x+1)+(y+1)+1}} = C(x+1,y+1;t).
    \end{align*}
    In the second equality, we use that $x \leq y$, so we have 
    \[
    x < y+1, \text { so } |x-(y+1)| + 1 = y + 1 - x +1 = |x-y| + 2.
    \]
    If we consider $x+1$, the power of $t$ depends on whether $x=y$ or not. In the case $x < y$, we have
    \[
    x+1 \leq y, \text { so } |(x+1)-y| + 1 = y - (x+1) + 1 = |x-y|.
    \]
    In the case $x = y$, however, we have,   
    \[
    x+1 > y, \text { so } |(x+1)-y| + 1 = x-y + 2 = |x-y| + 2,
    \]
    inducing the factor $(1+(t^2-1)\delta_{xy})$ to account for both cases in the equation.
    The result follows from the recursive property of the polynomial $p_{x,y}(t)$ shown in $(i)$.
\end{proof}

\begin{proof}[Proof of \cref{prop:trek_rule_dag}]
    Writing out the recursive formula for $s_{ij}$, we obtain
    \begin{align*}
    s_{ij} 
    & = \sum_{\substack{k \in \mathrm{pa}(i), \\ l \in \mathrm{pa}(j)}} a_{ik}a_{jl} s_{kl} + \delta_{ij} \omega_i^{(2)} 
    = t^2 s_{ij} +  \sum_{\substack{k \in \mathrm{pa}(i), \\ l \in \mathrm{pa}(j), \\ (k,l) \neq (i,j)}} a_{ik}a_{jl} s_{kl} + \delta_{ij} \omega_i^{(2)}.
    \end{align*}
    Solving for $s_{ij}$ yields
    \begin{align*}
    s_{ij}
    &= \sum_{\substack{k \in \mathrm{pa}(i), \\ l \in \mathrm{pa}(j), \\ (k,l) \neq (i,j)}} \frac{1}{1-t^2} a_{ik}a_{jl} s_{kl} + \delta_{ij} \frac{1}{1-t^2} \omega_i^{(2)} \\
    &= \sum_{l \in \mathrm{pa}(j) \setminus \{j\}} \frac{1}{1-t^2} t a_{jl} s_{il} + \sum_{k \in \mathrm{pa}(i) \setminus \{i\}} \frac{1}{1-t^2} t a_{ik} s_{kj} + \sum_{\substack{k \in \mathrm{pa}(i)\setminus \{i\}, \\ l \in \mathrm{pa}(j)\setminus \{j\}}} \frac{1}{1-t^2} a_{ik}a_{jl} s_{kl} + \delta_{ij} \frac{1}{1-t^2} \omega_i^{(2)}.
    \end{align*}
    This formula motivates an inductive proof similar to the proof of \cite[Prop. 4.3]{boege2024conditional}. We assume that \eqref{eq:trek_rule_dag} holds for $s_{il}$, $s_{kj}$, and $s_{kl}$ as the induction hypothesis. Fix a topological ordering of the nodes and the induced lexicographic ordering of the entries $s_{ij}$ of $S$, where $s_{kl} < s_{ij}$ if and only if $l < j $, or $l = j$ and $k < i$.

    The smallest $S$-entries are given by $s_{ii}$, where $i$ is a source node of $G$. In this case, we have 
    \[
    s_{ii} = \frac{1}{1-t^2} \omega_i^{(2)} = C(0,0;t) \omega_i^{(2)}, 
    \]
    which corresponds to the sum over the empty trek in \eqref{eq:trek_rule_dag}.
    Note that if $i$ and $j$ are two distinct source nodes, we have $s_{ij} = 0$ corresponding to the claim in \eqref{eq:trek_rule_dag} as well. 

    Now consider a non-empty base trek (i.e., with no cycles and self-loops) between two nodes $i$ and $l$, denoted by $\Tilde{\tau} \in \mathcal{T}^*(i,l)$. Let $k$ be the unique parent of $i$ on this trek. Then the trek $\Tilde{\tau}$ can be decomposed uniquely into the remaining trek $\tau$ between $k$ and $l$ and the edge $k \rightarrow i$. Hence, we have a bijection between $\mathcal{T}^*(i,l)$ and $\bigcup_{k \in \mathrm{pa}(i)\setminus \{i\}} \mathcal{T}^*(k,l)$. Moreover, $d(\Tilde{\tau}_i) = d(\tau_k)+1$ and $a^{\Tilde{\tau}_i} = a^{\tau_k} \cdot a_{ik}$. A similar decomposition can be applied to the path to the second leaf of the treak or to both at the same time.
    Applying the induction hypothesis together with these observations and \cref{lem:recursion_trek_rule_dag}, we obtain
    \begin{align*}
    s_{ij} 
    &= \sum_{l \in \mathrm{pa}(j) \setminus \{j\}} \sum_{\Tilde{\tau}=(\Tilde{\tau}_i,\Tilde{\tau}_l) \in \mathcal{T}^*(i,l)} \frac{1}{1-t^2} t C(d(\Tilde{\tau}_i),d(\Tilde{\tau}_l);t) a^{\Tilde{\tau}_i}a^{\Tilde{\tau}_l}a_{jl} w_{\mathrm{top}(\Tilde{\tau})}^{(2)} \\
    & \quad + \sum_{k \in \mathrm{pa}(i) \setminus \{i\}} \sum_{\Tilde{\tau}=(\Tilde{\tau}_k,\Tilde{\tau}_j) \in \mathcal{T}^*(k,j)} \frac{1}{1-t^2} t C(d(\Tilde{\tau}_k),d(\Tilde{\tau}_j);t) a^{\Tilde{\tau}_k} a_{ik} a^{\Tilde{\tau}_j} w_{\mathrm{top}(\Tilde{\tau})}^{(2)} \\
    & \quad + \sum_{\substack{k \in \mathrm{pa}(i)\setminus \{i\}, \\ l \in \mathrm{pa}(j)\setminus \{j\}}} \sum_{\tau=(\tau_k,\tau_l) \in \mathcal{T}^*(k,l)} \frac{1}{1-t^2} C(d(\tau_k),d(\tau_l);t) a^{\tau_k} a_{ik} a^{\tau_l} a_{jl} w_{\mathrm{top}(\tau)}^{(2)} + \delta_{ij} \frac{1}{1-t^2} \omega_i^{(2)} \\
    &= \sum_{\substack{k \in \mathrm{pa}(i)\setminus \{i\}, \\ l \in \mathrm{pa}(j)\setminus \{j\}}} \sum_{\tau=(\tau_k,\tau_l) \in \mathcal{T}^*(k,l)} \frac{1}{1-t^2} \left[t(C(d(\tau_k)+1,d(\tau_l);t) + C(d(\tau_k),d(\tau_l)+1;t)) + C(d(\tau_k),d(\tau_l);t) \right] \\
    & \quad \cdot a^{\tau_k} a_{ik} a^{\tau_l} a_{jl} w_{\mathrm{top}(\tau)}^{(2)} + \delta_{ij} \frac{1}{1-t^2} \omega_i^{(2)} \\
    &= \sum_{\substack{k \in \mathrm{pa}(i)\setminus \{i\}, \\ l \in \mathrm{pa}(j)\setminus \{j\}}} \sum_{\tau=(\tau_k,\tau_l) \in \mathcal{T}^*(k,l)} C(d(\tau_k)+1,d(\tau_l)+1;t) a^{\tau_k} a_{ik} a^{\tau_l} a_{jl} w_{\mathrm{top}(\tau)}^{(2)} + \delta_{ij} \frac{1}{1-t^2} \omega_i^{(2)} \\
    &= \sum_{\tau=(\tau_i,\tau_j) \in \mathcal{T}^*(i,j)} C(d(\tau_i),d(\tau_j);t) a^{\tau_i} a^{\tau_j} w_{\mathrm{top}(\tau)}^{(2)}.
    \end{align*}
    In the case $i = j$, the last term $\frac{1}{1-t^2} \omega_i^{(2)}$ accounts for the empty trek that does not involve true parents of~$i$. 
\end{proof}

\begin{remark}
    The proof of the restricted trek rule for third-order cumulant entries is analogous to the argument above. However, it requires substantially more bookkeeping of indices during the induction, as well as proving a more involved recursion for the appearing polynomials $p_{x,y,z}(t)$. 
\end{remark}

\subsection{Restricted trek rule for higher-order cumulants} \label{app::dag_trek_rule_higher_cum}

The general form of an entry of the $n$th order cumulant in the discrete Lyapunov model of a DAG is given as 
\[
(T_n)_{i_1,\dots,i_n} = \sum_{\tau = (\tau_1, \dots , \tau_n) \in \mathcal{T}^*(i_1, \dots, i_n)} C(d(\tau_1),\dots,d(\tau_n);t) a^{\tau_1} \cdots a^{\tau_n}w^{(n)}_{top(\tau)},
\]
where $C$ is the rational function in the edge lengths of all $n$ components of the considered base trek as well as the self-loop parameter $t$ given by 
\[
C(x_1,\dots,x_n;t) = t^{n\cdot\max(x_1,\dots,x_n)-\sum_{l=1}^n x_l} \frac{p_{x_1,\dots,x_n}(t)}{(1-t^n)^{\sum_{l=1}^n x_l +1}}.
\]
For instance, in the case $n=3$, the rational function is given as
\[
C(x,y,z;t) = t^{3\cdot\max(x,y,z)-(x+y+z)} \frac{p_{x,y,z}(t)}{(1-t^3)^{x+y+z+1}}.
\]
While it is clear how to compute the powers of $t$ and $(1-t^3)$, the polynomial $p_{x,y,z}(t)$ is much less tractable.

In the case $n=2$, the polynomial $p_{x,y}(t)$ seems simple, however it contains a more intricate structure that allows us to extend the definition to higher $n$. 
First, observe that in the case $x=y$, i.e., a base trek with equal length paths, the formula collapses to 
\begin{equation} \label{eq::dag_trek_polynomial_2}
    p_{x,x}(t)=\sum_{l = 0}^x t^{2l} \binom{x}{l}^2.
\end{equation}
The coefficients appearing in $p_{x,x}(t)$ are the squared entries of the $x$-th row of Pascal's triangle. The resulting integer sequence is known as \href{https://oeis.org/A008459}{OEIS A008459}. Combinatorially, this sequence admits several interpretations; for example, it counts the lattice paths from $(0, 0)$ to $(x, x)$ with steps $(1, 0)$ and $(0, 1)$, having $l$ right turns.
 
In the case $n=3$, the coefficients of the polynomial $p_{x,x,x}(t)$ are given by a known integer sequence as well, namely \href{https://oeis.org/A181544}{OEIS A181544}. This sequence can be defined via the row generating function of row~$x$:
\begin{align*}
    B(x) & = \left(\sum_{i \geq 0} \binom{x+i}{i}^3s^i\right) (1-s)^{3x+1} 
    = \sum_{l \geq 0} \left( \sum_{k = 0}^l (-1)^k \binom{3x+1}{k}\binom{x+l-k}{l-k}^3 \right) s^l.
\end{align*}
This representation allows us to infer the closed formula for each coefficient in that row, yielding
\[
p_{x,x,x}(t) 
= \sum_{l = 0}^{2x} t^{3(2x-l)} \sum_{k = 0}^l (-1)^{l-k} \binom{3x+1}{l-k}\binom{x+k}{k}^3.
\]

Interestingly, a similar formula can be derived from the row generating function for $n=2$, namely 
\[
p_{x,x}(t) = \sum_{l = 0}^{x} t^{2(x-l)} \sum_{k = 0}^l (-1)^{l-k} \binom{2x+1}{l-k}\binom{x+k}{k}^2.
\]
It can be shown that the coefficients of this polynomial coincide with the squared binomial coefficients appearing in~\eqref{eq::dag_trek_polynomial_2}. Generalizing this to the product of two different binomial coefficients reveals the underlying combinatorial pattern of the coefficients in $p_{x,y}(t)$ as 
\[
\binom{x}{l}\binom{y}{l} = \sum_{k=0}^l (-1)^{l-k}\binom{x+y+1}{l-k} \binom{x+k}{k}\binom{y+k}{k}
\]
holds (by rewriting the binomial coefficients in the sum as a part independent of $i$ and a part dependent on $i$ that can further be rewritten as an application of the hypergeometric function $\prescript{}{3}{F}_2$ -- which can be rewritten again in terms of a fraction of binomial coefficients). 
When generalizing this pattern to the case $n=3$, we obtain 
\[
p_{x,y,z}(t)= \hspace{-5pt} \sum_{l = 0}^{x+y+z-\max(x,y,z)} \hspace{-4pt} t^{3(x+y+z-\max(x,y,z)-l)} \sum_{k = 0}^l (-1)^{l-k} \binom{x+y+z+1}{l-k}\binom{x+k}{k}\binom{y+k}{k}\binom{z+k}{k}.
\]

Consequently, we conjecture that this formula generalizes to higher $n$ as
$$p_{x_1,\dots,x_n}(t) = \hspace{-8pt} \sum_{l = 0}^{\sum_{i=1}^n x_i -\max(x_1,\dots,x_n)} \hspace{-7pt} t^{n(\sum_{i=1}^n x_i -\max(x_1,\dots,x_n)-l)} \sum_{k = 0}^l (-1)^{l-k} \binom{x_1+\cdots+x_1+1}{l-k}\prod_{i=1}^n\binom{x_i+k}{k}.$$

\section{Proofs for \texorpdfstring{\cref{sec::ParameterIdentifiabilityDAGs}}{Section 4}}\label{sec:proofsParameterIdentifiabilty}
\subsection{Proof of \texorpdfstring{\cref{prop:IdAllSelfLoops}}{Theorem 4.4}}

\begin{proof}
    Let the node set $V$ of $G$ be topologically ordered as $0 \leq \cdots \leq p-1$. The proof of generic identifiability proceeds by induction. At each step $j$, we show that we can generically identify all parameters $a$ corresponding to edges going into node $j$, i.e., $a_{ji}$ for $i \leq j$. 

    Since $G$ is a DAG, the first node $0$ in the topological order is a source. If we consider the subgraph of $G$ consisting of $0$ and its first child $i$ according to the topological order, we are in the case of \cref{ExampleIDTwoNodes} and can generically identify $a_{00}$. 

    We now proceed with the induction step. Assume we have generically identified all edges going into the nodes up until $j-1$, i.e., all $a_{kl}$ such that $k \leq j-1$ and $l \leq j-1$. Now consider node $j$. There are two cases, either it is or is not a source. If it is a source proceed similarly to the base case to identify $a_{jj}$. 

    Otherwise, $j$ is a non-source node. In that case, we obtain the following equations by using the recursive formulas for $S$ and $T$, 
    when $i \neq j$. Without loss of generality, we assume that the first source in the topological order on $G$ that is an ancestor of $j$ is node $0$. Then,
    \begin{equation*}
        s_{ij} = \sum_{k \in \text{pa}(i), l \in \text{pa}(j)} a_{ik} a_{jl} s_{kl}, \qquad t_{00j} = \sum_{m \in \text{pa}(j)} a_{00}^2 a_{jm} t_{00m}. 
    \end{equation*}

    Let $\{i_1, \dots, i_d \}$ denote the parents of $j$ not including $j$ itself in topological order. We collect the equations for $s_{i_1j},\ldots, s_{i_d j}$ together with the equation for $t_{00j}$ as follows:  
    
   \begin{align*}
        \begin{bmatrix}
        s_{i_1 j} \\ 
        \vdots \\ 
        s_{i_d j} \\ 
        t_{00j}
        \end{bmatrix} 
        &= 
        \begin{bmatrix}
             A_{ \{ i_1, \dots, i_d \} \times ( \{0\} \cup [j-1] )  } & 0 \\
             0 & a_{00}^2 \\ 
        \end{bmatrix}
        \begin{bmatrix}
            S_{( \{0 \} \cup [j-1] ) \times \{ i_1, \dots, i_d, j \}}  \\
            \begin{matrix}
                t_{00 i_1} &  \cdots &  t_{00 i_d} &  t_{00j} 
            \end{matrix}
        \end{bmatrix} \cdot 
        \begin{bmatrix}
        a_{j i_1 }  \\
        \vdots \\ 
        a_{j i_d} \\
        a_{jj} \\
        \end{bmatrix}  \\
        &=
        \begin{bmatrix}
           A_{ \{ i_1, \dots, i_d \} \times ( \{0\} \cup [j-1] )  } \cdot S_{( \{0 \} \cup [j-1] ) \times \{ i_1, \dots, i_d \}} &  A_{ \{ i_1, \dots, i_d \} \times ( \{0\} \cup [j-1] )  } S_{ (\{0 \} \cup [j-1]) \times j} \\
          \begin{matrix}
              a_{00}^2 t_{00 i_1} & & & & \cdots & & & & a_{00}^2 t_{00 i_d}
          \end{matrix} & a_{00}^2 t_{00j} \\
        \end{bmatrix} \cdot 
        \begin{bmatrix}
        a_{ j i_1}  \\
        \vdots \\ 
        a_{j i_d} \\
        a_{jj} \\
        \end{bmatrix}.
    \end{align*}

We denote the matrix above by $P^j$ with rows indexed by $\{i_1,\dots,i_d,0\}$ and columns indexed by $\{i_1,\dots,i_d,j\}$ in the indicated order. Our goal is to show that $P^j$ is generically invertible. This is equivalent to the upper left block $P^j_{\{i_1,\dots,i_d\}\times\{i_1,\dots,i_d\}}$
as well as its Schur complement in $P^j$ being generically invertible. 

To show this, we pick a lower triangular matrix $A$ with non-zero diagonal entries such that there is only one path from $0$ to $j$ with non-zero edge weights. All other edge weights that are not self-loops are set to zero. Without loss of generality, assume that this path to $j$ passes through the first parent $i_1$. We denote the unique nodes on the path by $(0,k_1,\dots,k_r,i_1,j)$. 
As a consequence, we have that $t_{00i_1} \neq 0$ and $t_{00j} \neq 0$, while $t_{00i_2},\dots,t_{00i_d} = 0$. Furthermore, we have in particular that $s_{0j},s_{k_1j},\dots,s_{k_rj},s_{i_1j} \neq 0$, while $s_{lj} = 0$ for all nodes $l$ that are not on that path.

Computing the upper left block of $P^j$, we obtain
\begin{align*}
    P^j_{\{i_1,\dots,i_d\}\times\{i_1,\dots,i_d\}}
    = & \begin{bmatrix}
        \cdots & 0 & a_{i_1k_r} & a_{i_1i_1} & 0 & \cdots & \cdots &  & \\
        \cdots & 0 & 0 & 0 & a_{i_2i_2} & 0 & \cdots & &  \\
        \ddots & \vdots & \vdots & \vdots & \ddots & \ddots & \ddots & & \\
        \cdots & 0 & 0 & 0 & \cdots & 0 & a_{i_di_d} & 0 & \cdots \\
    \end{bmatrix} 
    \begin{bmatrix}
        s_{0 i_1} & 0 & \cdots & \cdots & 0 \\
        \vdots & \vdots & & & \vdots \\
        s_{k_{r-1}i_1} & \vdots & & & \vdots \\
        s_{k_ri_1} & \vdots & & & \vdots \\
        s_{i_1i_1} & 0 & \cdots & \cdots & 0 \\
        0 & s_{i_2i_2} & 0 & \cdots & 0 \\
        \vdots & \ddots & \ddots & \ddots & \vdots \\
        0 & \cdots & 0 & s_{i_di_d} & 0 
    \end{bmatrix} \\
    = & \begin{bmatrix}
        a_{i_1k_r} s_{k_ri_1} + a_{i_1i_1} s_{i_1i_1} & & & \\
        & a_{i_2i_2} s_{i_2i_2} & & \\
        & & \ddots & \\
        & & & a_{i_di_d} s_{i_di_d}
    \end{bmatrix},
\end{align*}
so it is generically invertible by taking the inverse of the diagonal elements.
Note that we depicted the rows and columns indexed by $k_r$, $i_1, \dots,i_d$ to be adjacent in the matrix, however they do not have to be consecutive nodes in the topological order. In that case, there would be additional zero rows and columns in the respective matrices.

Its Schur complement in $P^j$ is of the form
\begin{align*}
    P^j_{0j\cdot \{i_1,\dots,i_d\}} 
    = a_{00}^2t_{00j} - 
    \begin{bmatrix}
        a_{00}^2t_{00i_1} & \dots & a_{00}^2t_{00i_d} & a_{00}^2t_{00j}
    \end{bmatrix} 
    \cdot
    \left(P^j_{\{i_1,\dots,i_d\}\times\{i_1,\dots,i_d\}}\right)^{-1} 
    \cdot 
    P^j_{\{i_1,\dots,i_d\}\times j}.
\end{align*}
We further have 
\begin{align*}
    P^j_{\{i_1,\dots,i_d\}\times j} 
    & = A_{\{i_1,\dots,i_d\} \times (\{0\} \cup [j-1])} \cdot S_{(\{0\} \cup [j-1]) \times j} \\
    & = A_{\{i_1,\dots,i_d\} \times (\{0\} \cup [j-1])} 
    \cdot 
    \begin{bmatrix}
    * \\ \vdots \\ * \\ s_{k_{r-1}j} \\ s_{k_r j} \\ s_{i_1j} \\ 0 \\ \vdots \\ 0 
    \end{bmatrix} 
    = \begin{bmatrix}
    a_{i_1k_r} s_{k_rj} + a_{i_1i_1} s_{i_1j} \\
    0 \\ \vdots\\ 0
    \end{bmatrix}.
\end{align*}
Combining these expressions yields 
\begin{align*}
    P^j_{0j\cdot \{i_1,\dots,i_d\}} = a_{00}^2t_{00j} - a_{00}^2t_{00i_1} \frac{a_{i_1k_r} s_{k_rj} + a_{i_1i_1} s_{i_1j}}{a_{i_1k_r} s_{k_ri_1} + a_{i_1i_1} s_{i_1i_1}}.
\end{align*}
Thus, we now have to choose the values of the non-zero edge weights in $A$ such that 
\begin{align}\label{eq:schur_complement}
t_{00j}\left(a_{i_1k_r} s_{k_ri_1} + a_{i_1i_1} s_{i_1i_1}\right) - t_{00i_1} \left(a_{i_1k_r} s_{k_rj} + a_{i_1i_1} s_{i_1j}\right) \neq 0.
\end{align}
Note that 
\[
t_{00j} = \frac{a_{00}^2a_{ji_1}}{1-a_{00}^2a_{jj}} t_{00i_1},
\]
so the expression above simplifies to
\[
a_{00}^2a_{i_1j}\left(a_{i_1k_r} s_{k_ri_1} + a_{i_1i_1} s_{i_1i_1}\right) - (1-a_{00}^2a_{jj}) \left(a_{i_1k_r} s_{k_rj} + a_{i_1i_1} s_{i_1j}\right) \neq 0.
\]

Observe that not all terms in the above expression depend on $\omega^{(2)}_{i_1}$, as there are, for example, no treks between $k_r$ and either $i_1$ or $j$ starting at $i_1$. The only terms that depend on $\omega^{(2)}_{i_1}$ are $s_{i_1i_1}$ and $s_{i_1j}$, and we~have 
\begin{align*}
    s_{i_1i_1} & = \dots + \omega^{(2)}_{i_1}\frac{1}{1-a_{i_1i_1}^2}, \\
    s_{i_1j} & = \frac{1}{1-a_{i_1i_1}a_{jj}}\left(a_{i_1k_r} a_{ji_1} s_{k_ri_1} + a_{i_1k_r} a_{jj} s_{k_rj} + a_{i_1i_1} a_{ji_1} s_{i_1i_1}\right)
    = \dots + \omega^{(2)}_{i_1}\frac{a_{i_1i_1} a_{ji_1}}{(1-a_{i_1i_1}^2)(1-a_{i_1i_1}a_{jj})}
\end{align*}
due to the structure of the chosen $A$.
Consequently, the coefficient of $\omega^{(2)}_{i_1}$ in the numerator of the Schur complement \eqref{eq:schur_complement} is 
\begin{align*}
   & a_{00}^2a_{i_1j} a_{i_1i_1} \frac{1}{1-a_{i_1i_1}^2} - (1-a_{00}^2a_{jj}) a_{i_1i_1} \frac{a_{i_1i_1} a_{ji_1}}{(1-a_{i_1i_1}^2)(1-a_{i_1i_1}a_{jj})} \\
   = \ &  a_{i_1j} a_{i_1i_1} \frac{a_{00}^2(1-a_{i_1i_1}a_{jj}) - (1-a_{00}^2a_{jj})a_{i_1i_1}}{(1-a_{i_1i_1}^2)(1-a_{i_1i_1}a_{jj})} \\
   = \ & a_{i_1j} a_{i_1i_1} \frac{a_{00}^2 - a_{i_1i_1}}{(1-a_{i_1i_1}^2)(1-a_{i_1i_1}a_{jj})}.
\end{align*}
Finally, if we set all non-zero entries of $A$ to $\frac12$, for instance, the coefficient evaluates to $-\frac19 \neq 0$.
\end{proof}

\subsection{Proof of \texorpdfstring{\cref{thm::GenericIDGeneral}}{Theorem 4.8}}
\begin{proof}
    Let the node set $V$ of $G$ be topologically ordered as $0\leq \ldots\leq p-1$. The proof proceeds by induction. 

    The base case of the induction is to identify the self-loop of the first source. We can identify the self-loop of the source by \cref{ExampleIDTwoNodes} or \cref{ExampleIDTwoNodesOneLoop}, depending on whether the child has a self-loop. 
    
    We now proceed with the induction step. Consider node $j$. If it is a source node, repeat the base case. Otherwise, let $i_1, \ldots, i_d$ be the parents of $j$. There are two cases, depending on whether $j$ has a self-loop or~not. 
    
    Assume first that $j$ does not have a self-loop. In this case, we need to identify $d$ incoming edges at this step. For each parent $i_k$, let $e_k$ denote one of it's ancestors that is a source. Notice that all $e_1,\ldots, e_d$ are distinct; otherwise, $G$ would not be a polytree. Then, by collecting the recursive formulas for $s_{e_1 j}, \dots s_{e_d j}$, we~have
   \begin{align}\label{eq:DAGSubmatrices}
        \begin{bmatrix}
        s_{e_1 j} \\ 
        \vdots \\ 
        s_{e_d j} 
        \end{bmatrix} 
        &= 
        \begin{bmatrix}
             A_{ \{ e_1, \dots, e_d \} \times ( \{0\} \cup [j-1] )  }  
        \end{bmatrix}
        \begin{bmatrix}
            S_{( \{0 \} \cup [j-1] ) \times \{ i_1, \dots, i_d\}}  
        \end{bmatrix} \cdot 
        \begin{bmatrix}
        a_{j i_1 }  \\
        \vdots \\ 
        a_{j i_d} \\
        \end{bmatrix}.
    \end{align}

    Notice that, since $e_1,\ldots, e_d$ are sources, they have no incoming edges except for the self-loops. Consequently, each row $e_l$ has only one non-zero entry in column $e_l$. Therefore, the product of the first two matrices above can be written as the product of two square matrices:
    \begin{align*}
        \begin{bmatrix}
             A_{ \{ e_1, \dots, e_d \} \times ( \{0\} \cup [j-1] )  }  
        \end{bmatrix}
        \begin{bmatrix}
            S_{( \{0 \} \cup [j-1] ) \times \{ i_1, \dots, i_d\}}  
        \end{bmatrix} = \begin{bmatrix}
             A_{ \{ e_1, \dots, e_d \} \times \{ e_1, \dots, e_d \} }  
        \end{bmatrix}
        \begin{bmatrix}
            S_{\{ e_1, \dots, e_d \}\times \{ i_1, \dots, i_d\}}  
        \end{bmatrix}. 
    \end{align*}
    
    The product is diagonal, since $s_{e_l i_k}=0$ unless $l=k$. Hence, we can write
     \begin{align*}
        \begin{bmatrix}
        s_{e_1 j} \\ 
        \vdots \\ 
        s_{e_d j} 
        \end{bmatrix} 
        &= 
        \begin{bmatrix}
            a_{e_1 e_1}s_{e_1 i_1} & & &\\
             & a_{e_2 e_2}s_{e_2 i_2} & &\\
            & & \ddots &\\
             & & & a_{e_d e_d}s_{e_d i_d}
        \end{bmatrix} \cdot 
        \begin{bmatrix}
        a_{j i_1 }  \\
        \vdots \\ 
        a_{j i_d} \\
        \end{bmatrix}. 
    \end{align*}
    Now, let $A$ satisfy $a_{kk} \neq 0$ for all $k$ and $a_{ji_1} \neq 0$. Then, by the equitrek rule, we have $s_{e_k i_k} \neq 0$ for all $1 \leq k \leq d$, since there exists an equitrek between $e_k$ and $i_k$ for each $k$. For this choice of parameters, the matrix is invertible and therefore, the parameters are generically identifiable.

Now assume that $j$ has a self-loop. Without loss of generality, assume $0=e_1$. It follows that $t_{00i_1} \neq 0$, while $t_{00i_k}=0$ for all $2\leq k\leq d$. Consider the system
   \begin{align*}
        \begin{bmatrix}
        s_{0 j} \\ 
        \vdots \\ 
        s_{e_d j} \\ 
        t_{00j}
        \end{bmatrix} 
        &= 
        \begin{bmatrix}
             A_{ \{ 0, e_2, \dots, e_d \} \times ( \{0\} \cup [j-1] )  } & 0 \\
             0 & a_{00}^2 \\ 
        \end{bmatrix}
        \begin{bmatrix}
            S_{( \{0 \} \cup [j-1] ) \times \{ i_1, \dots, i_d, j \}}  \\
            \begin{matrix}
                t_{00 i_1} & 0 \cdots &  0 &  t_{00j} 
            \end{matrix}
        \end{bmatrix} \cdot 
        \begin{bmatrix}
        a_{j i_1 }  \\
        \vdots \\ 
        a_{j i_d} \\
        a_{jj} \\
        \end{bmatrix}  \\
        &=
        \begin{bmatrix}
            a_{00 }s_{0 i_1} & 0 & \ldots & 0  & a_{00}s_{0j}\\
            0 & a_{e_2 e_2}s_{e_2 i_2} & \ldots & 0 & a_{e_2e_2}s_{e_2j}\\
            & \vdots\\
            0 & \ldots & 0 & a_{e_d e_d}s_{e_d i_d} & a_{e_de_d}s_{e_dj}\\
            a_{00}^2t_{00i_1} & 0 & \ldots &0 & a_{00}^2t_{00j}
        \end{bmatrix} \cdot 
        \begin{bmatrix}
        a_{j i_1 }  \\
        \vdots \\ 
        a_{j i_d} \\
        a_{jj} \\
        \end{bmatrix}. 
    \end{align*}
    As before, let $A$ satisfy $a_{kk} \neq 0,1$ for all $k$ and $a_{ji_1} \neq 0$. Then, by the equitrek rule, we have $s_{e_k i_k}, t_{e_k e_k i_k} \neq 0$ for all $1 \leq k \leq d$ and $s_{0j}, t_{00j} \neq 0$, since there is an equitrek between $e_k$ and $i_k$ for each $k$.
    
    Hence, the upper-left $d \times d$ block of this matrix is invertible for this choice of $A$. To prove that the entire matrix is invertible, it suffices to check that the Schur complement with respect to this block, which is a $1 \times 1$ matrix, is nonzero under these conditions.  The Schur complement is
    \[
    a_{00}^2 t_{00j} - \frac{a_{00}^2 t_{00i_1} s_{0j}}{s_{0i_1}}.
    \]
    For the sake of contradiction, suppose that the Schur complement is zero. Clearing denominators and dividing through by $a_{00}$, which is nonzero by assumption, yields
    \begin{equation}\label{eq:TreeSchur}
    s_{0i_1} t_{00j} - s_{0j}t_{00i_1} = 0.
    \end{equation}
    Since $G$ is a polytree, the only path from $0$ to $j$ passes through $i_1$. Therefore, we have
    \[
    s_{0j} = \frac{a_{00}a_{ji_1} s_{0i_1}}{1-a_{00}a_{jj}} \qquad \text{and} \qquad
    t_{00j} = \frac{a_{00}^2 a_{ji_1} t_{00i_1}}{1-a_{00}^2a_{jj}}.
    \]
    Plugging these into \eqref{eq:TreeSchur}, we obtain
    \[
    s_{0i_1}t_{00i_1} \left(\frac{a_{00}^2 a_{ji_1}}{1-a_{00}^2a_{jj}} - \frac{a_{00}a_{ji_1} }{1-a_{00}a_{jj}}\right) =0.
    \]
    Clearing denominators and dividing through by $s_{0i_1} t_{00i_1}$, which is assumed to be nonzero, gives
    $$0 = (1-a_{00} a_{jj})a_{00}^2 a_{ji_1} - (1-a_{00}^2a_{jj}) a_{00} a_{ji_1} = a_{00}^2a_{ji_1} - a_{00}a_{ji_1} = a_{ji_1} a_{00} (a_{00}-1).$$
    This contradicts our assumption that $a_{ji_1} \neq 0$ and $a_{00} \neq 0,1$. Hence, the Schur complement is nonzero, and the matrix is invertible, as required. 
\end{proof}

\section{Proofs for \texorpdfstring{\cref{sec:local identifiability}}{Section 5}} \label{app::ProofsForLocalID}

We provide the proof for the entries of the modified Jacobians, described in \cref{prop::J_2(Sigma)descriptionOfEntries} and \cref{prop::J_3(T)descriptionOfEntries}. 

\begin{proof}[Proof of \cref{prop::J_2(Sigma)descriptionOfEntries}]
   By equation \eqref{eq::JacOfSigmawrtA} and the fact that $(I- A \otimes A)^{-1} \text{vec}(\Omega^{(2)}) = \text{vec}(S)$, the entry of the Jacobian indexed by $(ij), (\alpha \rightarrow \beta)$ is given by the following sum: 
    \begin{align*}
        J^G_2(S, a)_{(ij), \alpha \rightarrow \beta} &= \sum_{l = 0}^{p-1} \sum_{k = 0}^{p-1} (A \otimes E_{\beta \alpha} + E_{\beta \alpha } \otimes A)_{ip + j,lp+k}\cdot s_{lk} \\ 
        &= \sum_{l = 0}^{p-1} \sum_{k = 0}^{p-1}
         s_{lk} \left( a_{il}\delta_{(j,k)}(\beta, \alpha) + \delta_{(i,l)}(\beta, \alpha) a_{jk} \right) \\ 
        &= \delta_{j}(\beta) \sum_{l = 0}^{p-1} a_{il} s_{l \alpha} + \delta_{i}(\beta) \sum_{k = 0}^{p-1} a_{jk } s_{k \alpha},
    \end{align*}
    as stated in the statement of the proposition. To get to the trek description, note that $s_{l \alpha} = \sum_{\tau \in \mathcal{T}(l, \alpha)} m_{\tau}$, where the sum is over all equitreks between $l$ and $\alpha$ as given by the trek rule (\cref{prop::trekrule}). Multiplying this by $a_{il}$ corresponds to adding the edge $l \rightarrow i$ to the trek and thereby transforming the equitrek between $l$ and $\alpha$ to a trek between $i$ and $\alpha$ where the path going to $i$ is one longer than the path going to $\alpha$. 
\end{proof}

\begin{proof}[Proof of \cref{prop::J_3(T)descriptionOfEntries}]
    By equation \eqref{eq::JacofTwrtA} and the fact that $(I - A \otimes A \otimes A)^{-1} \text{vec}(\Omega^{(3)}) = \mathrm{vec}(T)$, the entry of the Jacobian indexed by $(ijk),(\alpha \rightarrow \beta)$ is given by the following sum:
    \begin{align*}
        &J^G_3(T,a)_{(ijk), \alpha \rightarrow \beta}  \\ 
 &= \sum_{l = 0}^{p-1} \sum_{m = 0}^{p-1} \sum_{n = 0}^{p-1} ((E_{\beta \alpha} \otimes A ) \otimes A + (A \otimes E_{\beta \alpha}) \otimes A + (A \otimes A )\otimes E_{\beta \alpha})_{(ip+j)p + k,(lp + m)p + n} t_{lmn} \\ 
 &= \sum_{l = 0}^{p-1} \sum_{m = 0}^{p-1} \sum_{n = 0}^{p-1}  t_{lmn} \left( (E_{\beta \alpha} \otimes A)_{ip + j, lp+m} a_{kn} + (A \otimes E_{\beta \alpha})_{ip + j, lp+m} a_{kn} + (A \otimes A)_{ip + j, lp+m} \delta_{(k,n)}(\beta, \alpha) \right) \\ 
 &= \sum_{l = 0}^{p-1} \sum_{m = 0}^{p-1} \sum_{n = 0}^{p-1}  t_{lmn} \left( \delta_{(i,l)}(\beta, \alpha) a_{jm} a_{kn} + a_{il} \delta_{(j,m)}(\beta, \alpha) a_{kn} + a_{il} a_{jm} \delta_{(k,n)}(\beta, \alpha) \right) \\ 
 &= \delta_{i}(\beta) \sum_{m = 0}^{p-1}\sum_{n = 0}^{p-1} a_{jm} a_{kn} t_{\alpha mn} + \delta_{j}(\beta) \sum_{l = 0}^{p-1} \sum_{n = 0}^{p-1} a_{il} a_{kn} t_{l \alpha n} + \delta_{k}(\beta) \sum_{l = 0}^{p-1} \sum_{m = 0 }^{p-1} a_{il} a_{jm} t_{l m \alpha},
    \end{align*}
    which matches the formula stated in the proposition. Arriving at the trek description is entirely analogous to the covariance case (\cref{prop::J_2(Sigma)descriptionOfEntries}).  
\end{proof}

Below we provide the lemmas required for the main proof of local identifiability using the Jacobian matrix. \cref{lem::CharecterizeGen2star} provides a sufficient condition for a graph to be a generalized two star. The base cases of \cref{thm:LocIdAllGraphs}
are covered by \cref{lem::Case345} and \cref{lem:generalized2Star}. 

\begin{lemma}
\label{lem::CharecterizeGen2star}
    Let $G = (V,E)$ be a connected undirected graph on at least six nodes where every pair of paths of vertex length $3$ intersect by at least one vertex. Then $G$ is a generalized two star.
\end{lemma}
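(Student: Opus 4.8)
The plan is to reformulate the hypothesis combinatorially and then pin down a single vertex meeting every path on three vertices; by the definition of a generalized two star, such a vertex is exactly the required center. Throughout I write $P_3$ for a path on three vertices (``vertex length $3$'') and observe that the hypothesis says precisely that $G$ contains \emph{no two vertex-disjoint} copies of $P_3$. The first step I would take is to bound the length of a longest path: if $G$ contained a path $v_1 v_2 v_3 v_4 v_5$ on five vertices, then $v_1 v_2 v_3$ and $v_3 v_4 v_5$ would be two vertex-disjoint $P_3$'s, a contradiction. Hence $G$ has no $P_5$, so a longest path $P = v_1 \cdots v_L$ has $L \le 4$. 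Since $G$ is connected with at least six vertices it contains a $P_3$, so $L \in \{3,4\}$, and I treat these two cases.

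In the case $L = 3$, the graph has no path on four vertices. Using that $v_1$ and $v_3$ are endpoints of a longest path (so all their neighbors lie on $P$) together with the absence of $P_4$ (a chord $v_1 v_3$, or any edge reaching a further vertex, produces a $P_4$ once a sixth vertex is attached by connectivity), I would show that the single internal vertex $v_2$ is incident to every edge of $G$. Thus $G$ is a star with center $v_2$, every $P_3$ passes through $v_2$, and $G$ is a (degenerate) generalized two star. The case $L = 4$ is the substantial one. Writing $P = v_1 v_2 v_3 v_4$, the key claim is that $\{v_2, v_3\}$ is a vertex cover of $G$, i.e. $R := V \setminus \{v_2, v_3\}$ is an independent set. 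I would prove this by ruling out every edge $xy$ with $x,y \notin \{v_2,v_3\}$: the endpoints $v_1, v_4$ have all their neighbors on $P$, and any remaining offending edge (the chord $v_1 v_4$, or an edge inside $V \setminus P$) can be extended, via a shortest connection to $\{v_2,v_3\}$ guaranteed by connectivity, into a $P_5$, contradicting the no-$P_5$ property. Once $R$ is independent, every vertex of $R$ is adjacent to $v_2$ or $v_3$, so with $X := N(v_2) \cap R$ and $Y := N(v_3) \cap R$ we have $X \cup Y = R$ and $|X \cup Y| = |V| - 2 \ge 4$.

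To finish the case $L = 4$, I would argue that $v_2$ or $v_3$ meets every $P_3$. If neither does, there is a $P_3$ avoiding $v_3$ and a $P_3$ avoiding $v_2$; since $\{v_2,v_3\}$ is a vertex cover, a $P_3$ avoiding $v_3$ must be centered at $v_2$ with both endpoints in $R$, forcing $|X| \ge 2$, and symmetrically $|Y| \ge 2$. A short counting step then shows that whenever $|X|,|Y| \ge 2$ and $|X \cup Y| \ge 4$ one can select two-element subsets $\{x_1,x_2\} \subseteq X$ and $\{y_1,y_2\} \subseteq Y$ that are disjoint; then $x_1 v_2 x_2$ and $y_1 v_3 y_2$ are two vertex-disjoint $P_3$'s, the desired contradiction. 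Hence one of $v_2, v_3$ lies on every $P_3$, and $G$ is a generalized two star.

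The main obstacle I anticipate is entirely within the case $L = 4$: both establishing that $\{v_2,v_3\}$ is a vertex cover (a careful but routine $P_5$-extension argument that must separately dispatch the chord $v_1 v_4$ and edges inside $V \setminus P$, using connectivity to reach $\{v_2,v_3\}$) and verifying the elementary but mildly case-laden disjoint-pairs selection. Everything else, including the entire case $L = 3$ and the reduction to $L \le 4$, is comparatively immediate.
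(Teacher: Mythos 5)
There is a genuine error at the very first step of your argument, and it propagates through everything that follows. You claim that a path $v_1v_2v_3v_4v_5$ on five vertices yields two vertex-disjoint $P_3$'s, namely $v_1v_2v_3$ and $v_3v_4v_5$. But these two paths share the vertex $v_3$, so they are \emph{not} vertex-disjoint, and no contradiction arises. To split a path into two vertex-disjoint copies of $P_3$ you need at least six vertices on the path. Consequently the correct bound on a longest path is $L \le 5$, not $L \le 4$, and your entire case analysis omits the case $L = 5$. This is not a minor omission: there are graphs satisfying the hypothesis that genuinely contain a $P_5$. For instance, take the path $v_1v_2v_3v_4v_5$ together with a pendant vertex $v_6$ attached to $v_3$. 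Every path on three vertices in this graph passes through $v_3$ (the only vertices of degree at least two are $v_2$, $v_3$, $v_4$, and every $P_3$ centered at $v_2$ or $v_4$ uses $v_3$ as an endpoint), so the hypothesis holds and the graph is a generalized two star with center $v_3$ --- yet your argument would ``derive a contradiction'' from it. The paper's proof indeed treats $n=5$ as a separate, legitimate case: it shows that for a longest path $v_1v_2v_3v_4v_5$, any further vertex can only attach through $v_3$ (attaching elsewhere creates either a path on six vertices or two disjoint $P_3$'s), forcing $v_3$ to be the center.

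Aside from this, your overall strategy (bound the longest path, then case-split on its length, using the endpoints-of-a-longest-path and no-two-disjoint-$P_3$'s constraints to force a cut vertex) is the same as the paper's, and your treatment of the cases $L=3$ and $L=4$ is broadly parallel to the paper's cases $n=3$ and $n=4$ (the paper argues slightly differently in the $n=4$ case, directly showing all extra vertices must hang off exactly one of the two internal vertices rather than via a vertex-cover and disjoint-pairs count, but both routes work). To repair your proof you must replace the $P_5$ exclusion with a $P_6$ exclusion and add the missing case of a longest path on five vertices.
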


\begin{proof}
    Assume that there are at least three different vertex paths of length $3$; otherwise the statement is trivial. 

    Let $n$ be the length of the longest path in $G$ with no repeated vertices. The proof proceeds by checking possible values for $n$, which are $n = 3,4$, or $5$. If $n$ was larger, this path could be split into two non-intersecting paths of vertex length at least three. 

    If $n = 3$, any two paths must share the middle vertex. Indeed, if they shared only the beginning or end vertex, this would result in a path of vertex length $4$, which is a contradiction. Thus, all distinct vertex paths must share the same center vertex. In this case, the graph is a star. 

    If $n = 4$, consider a path of vertex length $4$, $0 - 1 - 2 - 3$ in the graph. Then any other vertex in the graph cannot be connected to $0$ or $3$ in this path, since otherwise there would be a path of vertex length $5$. Thus, any remaining vertices in the graph have to be connected to vertex $1$ or $2$. If there is a remaining vertex $4$ that is connected to $1$, there cannot be another vertex $5$ connected to $2$, since otherwise $0-1-4$ and $3-2-5$ would be two paths of vertex length $3$ which do not intersect. Furthermore, the vertex $4$ could not be connected to both $1$ and $2$ because then there would be a path of vertex length $5$. Thus, any additional vertices in the graph have to be connected to $1$ or $2$ but only one of them, making this one the center of the graph. The center is unique because in order for the graph to have at least six vertices there must be at least two nodes aside from the given vertex path of length four. Without loss of generality, we let $1$ be the center. If $4$ is connected to $1$ then arguing as before any additional vertex can be connected to $1$ or a single vertex can be connected to $4$, but at most one since otherwise two vertex disjoint paths of vertex length three are created.

    If $n = 5$, consider a path of vertex length $5$, $0 - 1 - 2 - 3 - 4$. Arguing as in the $n = 4$ case, it is only possible for any remaining vertices to be connected to the graph by vertex $2$ (so it is possible as above to add a single vertex or a path of two (potentially connected) vertices going out from $2$) since otherwise a path of length $6$ is created or two vertex disjoint paths of length $3$ are created. Thus, any two paths of vertex length three in this graph must intersect at node $2$. 
\end{proof}

The calculations for the following Lemma were carried out in Maple, with code available at \url{https://github.com/cecilie2424/Local-Identifiability-in-Non-Gaussian-Discrete-Lyapunov-Models}. 

\begin{lemma}
    \label{lem::Case345}
     Let $G = (V,E)$ be any directed graph on three, four or five nodes with all self-loops. Then, the matrix $\mathcal{J}^G_{\mathrm{off}}(S,T)$ has full rank generically, so $G$ is locally identifiable. 
\end{lemma}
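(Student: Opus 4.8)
The plan is to reduce this infinite family of claims (generic full rank, for each of the small graphs) to a finite computation, exploiting the fact that having full column rank is a Zariski-open condition. Concretely, every entry of $\mathcal{J}^G_{\mathrm{off}}(S,T)$ is a polynomial in the entries of $A$ together with the entries of $S$ and $T$, and the latter are themselves rational functions of $A$ and the $\omega$'s, obtained by solving the linear systems $(I - A\otimes A)\,\mathrm{vec}(S) = \mathrm{vec}(\Omega^{(2)})$ and $(I - A\otimes A\otimes A)\,\mathrm{vec}(T) = \mathrm{vec}(\Omega^{(3)})$. Hence each maximal minor of $\mathcal{J}^G_{\mathrm{off}}(S,T)$ is a rational function of the parameters, and the matrix drops rank exactly on the common zero set of these minors. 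To prove generic full column rank $|E|$ it therefore suffices, for each graph $G$ under consideration, to exhibit a single parameter choice $(A,\Omega^{(2)},\Omega^{(3)})$ with $A$ supported on $E$ at which some $|E|\times|E|$ minor is nonzero.

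Since there are only finitely many directed graphs on three, four, or five vertices, the proof becomes a matter of running this check once per graph. First I would cut down the workload: by the block-diagonal structure of the modified Jacobian (the same structure used in the proof of \cref{thm:LocIdAllGraphs}), the rank is the sum of the ranks of the blocks associated to the connected components, so it is enough to treat connected graphs; and since relabeling vertices only permutes rows and columns, one representative per isomorphism class suffices. For each representative I would assemble the matrix using the trek descriptions of \cref{prop::J_2(Sigma)descriptionOfEntries} and \cref{prop::J_3(T)descriptionOfEntries} — which express every entry as a sum of trek monomials with one leg longer by one — evaluate it at a conveniently chosen stable $A$ (for instance all self-loops equal to $\tfrac12$ and all remaining edge weights generic, with the $\omega$'s set to $1$), and confirm that the rank equals $|E|$. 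Producing one explicit nonvanishing-minor certificate over the rationals (or, to keep coefficients small, over a random large prime field) makes each individual check rigorous.

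The main obstacle is one of scale rather than of idea: on five vertices there are up to $2^{20}$ off-diagonal edge patterns, and even after discarding disconnected skeletons and quotienting by isomorphism the number of cases is large enough that both the enumeration and the rank certification must be delegated to a computer algebra system (here Maple, with code in the linked repository). Two implementation points require care. First, one must verify that the chosen evaluation point is generic in the relevant sense, i.e. does not accidentally lie on the rank-dropping locus; this is automatic once a nonzero minor is exhibited, but it must be checked rather than assumed. Second, the construction of $S$ and $T$ requires the chosen $A$ to be Schur stable so that the Lyapunov solutions exist, which one secures by taking small self-loop and edge weights and confirming $|\lambda_i(A)| < 1$ at the point. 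A cleaner, uniform combinatorial argument — showing directly that the column trek monomials are linearly independent for every such graph — would be more satisfying, but the irreducibility of these graphs under edge-disconnection into components of size at least three is precisely what obstructs the inductive reduction of \cref{thm:LocIdAllGraphs}, so a finite case analysis seems unavoidable at this size.
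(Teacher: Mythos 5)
Your overall strategy is sound: generic full column rank is a Zariski-open condition on the (irreducible) parameter space, so a single explicit witness $(A,\Omega^{(2)},\Omega^{(3)})$ with a nonvanishing $|E|\times|E|$ minor certifies the claim for each graph, and the reduction to connected graphs via the block structure is the same move the paper makes. The difference is in how the finite case analysis is organized, and it is not a small one. You propose to enumerate all connected directed graphs on up to five vertices up to isomorphism and certify each separately; as you note, this runs into thousands of cases and you leave the enumeration itself as an obstacle to be ``delegated.'' The paper avoids this entirely with a monotonicity trick: it fixes a spanning polytree $P \subseteq G$, chooses $A$ supported on $P$ (self-loops $\tfrac12$, tree edges $1$), and verifies that the off-diagonal modified Jacobian of the \emph{complete} graph on $p$ nodes has full column rank $p^2$ at that point. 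Since $\mathcal{J}^{G'}_{\mathrm{off}}(S,T)$ is obtained from the complete-graph Jacobian by deleting columns, full column rank of the big matrix implies full column rank $|E'|$ of the submatrix for \emph{every} $G'$ containing $P$, and the witness $A$ lies in $\Theta_{G'}$ for all such $G'$. Because every connected graph contains a spanning polytree, it then suffices to run the computation once per directed tree: there are $1$, $2$, and $3$ unlabeled undirected trees on $3$, $4$, $5$ nodes, giving only $4+16+48=68$ matrices to check. This is what makes the ``finite computation'' actually finite in practice.

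Two smaller points. First, your phrase ``all remaining edge weights generic'' cannot stand as written in a certificate-based argument; you do recover later by asking for an explicit nonvanishing minor over $\mathbb{Q}$ or a finite field, but the witness must be a concrete point (the paper's is all tree edges equal to $1$). Second, Schur stability of the witness is less delicate than you suggest: the minors are rational functions on the Zariski closure of $\Theta_{G}$, so nonvanishing at any point where $S$ and $T$ are defined already gives generic nonvanishing on the (open, nonempty) stable locus; still, the paper's choice is triangular with diagonal $\tfrac12$ for polytrees, hence stable anyway. Neither point is fatal, but without the spanning-tree reduction your plan, while correct in principle, is a different and substantially heavier computation than the one the paper actually performs.
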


\begin{proof}

We consider all connected polytrees on $3$, $4$ and $5$ nodes. We show that when $A$ has the sparsity pattern of such a graph, the off-diagonal modified Jacobian for the complete graph has full column rank $(=p^2)$. 
This shows that for any graph 
$G'$ on $p$ nodes which has a given polytree on $p$ nodes as a subgraph, its off diagonal modified Jacobian, $\mathcal{J}^{G'}_{\mathrm{off}}(S,T)$, will have full column rank. Therefore, $G'$ is locally identifiable. 
To see this, remove the columns from the complete graph which are not in $G'$. Since the off-diagonal modified Jacobian of the entire graph had full column rank ($=p^2$), it will just drop by 1 in column rank when removing a column until we are only left with columns corresponding to edges in $G'$, which then has full column rank equal to the number of edges in $E'$. The chosen $A$ is valid for any graph which has the specific polytree as a subgraph. In this way we are guaranteed to check any connected graph if we check every polytree since any connected graph has at least one polytree as a subgraph.  

In order to ensure that all possible polytrees are checked (some possibly more than once), we consider all undirected unlabeled trees on three, four and five nodes. There are one, two and three respectively (consider \cref{fig::UndirectedTrees345} without labeling). Then all unlabeled polytrees on three, four and fives nodes will be included in the polytrees created by choosing all possible combinations of directions of the edges with skeleton equal to the undirected trees. This creates $4,16$ and $48$ different $A$'s to check to make sure that all possible directed trees are checked at least once. We choose a labeling, see \cref{fig::UndirectedTrees345}, to be able to pick specific $A$'s. 
The matrices are created and checked in the maple code. They are all chosen such that they have $a_{ii}=1/2,\ i=1,\dots,p$ on the diagonal and the off-diagonal entries corresponding to existing edges in the given polytree set to one. See \cref{ex::TwoNodeJacobian} to see such an $A$ in the case of a graph on two nodes. 
\end{proof}

    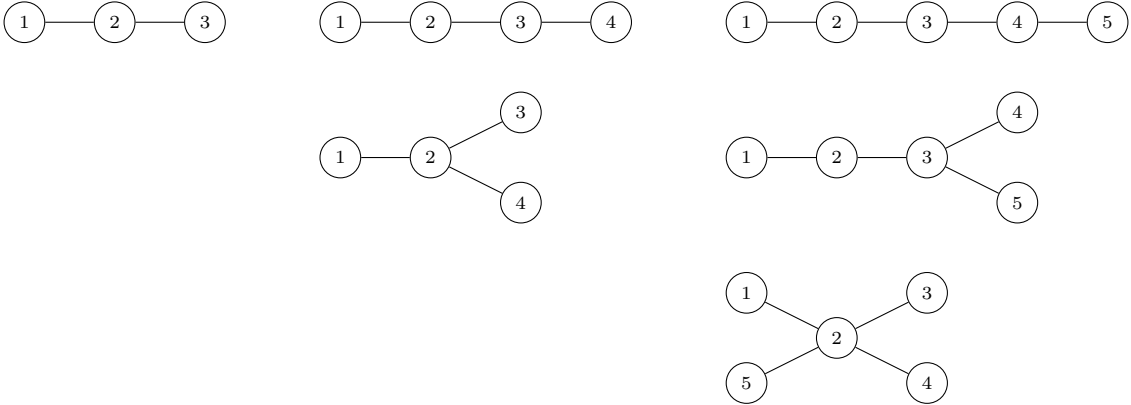
\begin{figure}[H]
    \centering
\begin{center}
    \begin{tikzpicture}[scale = 0.6]
      \tikzstyle{every node}=[font=\scriptsize]
      \node[circle, draw, minimum size=0.4cm] (0) at (0,0) {1};
      \node[circle, draw, minimum size=0.4cm] (1) at (2,0) {2};
      \node[circle, draw, minimum size=0.4cm] (2) at (4,0) {3};
      
      \node[circle, draw, minimum size=0.4cm] (3) at (7,0) {1};
      \node[circle, draw, minimum size=0.4cm] (4) at (9,0) {2};
      \node[circle, draw, minimum size=0.4cm] (5) at (11,0) {3};
      \node[circle, draw, minimum size=0.4cm] (6) at (13,0) {4};

      \node[circle, draw, minimum size=0.4cm] (7) at (7,-3) {1};
      \node[circle, draw, minimum size=0.4cm] (8) at (9,-3) {2};
      \node[circle, draw, minimum size=0.4cm] (9) at (11,-2) {3};
      \node[circle, draw, minimum size=0.4cm] (10) at (11,-4) {4};

      \node[circle, draw, minimum size=0.4cm] (11) at (16,0) {1};
      \node[circle, draw, minimum size=0.4cm] (12) at (18,0) {2};
      \node[circle, draw, minimum size=0.4cm] (13) at (20,0) {3};
      \node[circle, draw, minimum size=0.4cm] (14) at (22,0) {4};
      \node[circle, draw, minimum size=0.4cm] (15) at (24,0) {5};

      \node[circle, draw, minimum size=0.4cm] (16) at (16,-3) {1};
      \node[circle, draw, minimum size=0.4cm] (17) at (18,-3) {2};
      \node[circle, draw, minimum size=0.4cm] (18) at (20,-3) {3};
      \node[circle, draw, minimum size=0.4cm] (19) at (22,-2) {4};
      \node[circle, draw, minimum size=0.4cm] (20) at (22,-4) {5};

      \node[circle, draw, minimum size=0.4cm] (21) at (16,-6) {1};
      \node[circle, draw, minimum size=0.4cm] (22) at (18,-7) {2};
      \node[circle, draw, minimum size=0.4cm] (23) at (20,-6) {3};
      \node[circle, draw, minimum size=0.4cm] (24) at (20,-8) {4};
      \node[circle, draw, minimum size=0.4cm] (25) at (16,-8) {5};

      \draw[-,  >=stealth] (0) edge  (1);
      \draw[-,  >=stealth] (1) edge  (2);

      \draw[-,  >=stealth] (3) edge  (4);
      \draw[-,  >=stealth] (4) edge  (5);
      \draw[-,  >=stealth] (5) edge  (6);

      \draw[-,  >=stealth] (7) edge  (8);
      \draw[-,  >=stealth] (8) edge  (9);
      \draw[-,  >=stealth] (8) edge  (10);

      \draw[-,  >=stealth] (11) edge  (12);
      \draw[-,  >=stealth] (12) edge  (13);
      \draw[-,  >=stealth] (13) edge  (14);
      \draw[-,  >=stealth] (14) edge  (15);

      \draw[-,  >=stealth] (16) edge  (17);
      \draw[-,  >=stealth] (17) edge  (18);
      \draw[-,  >=stealth] (18) edge  (19);
      \draw[-,  >=stealth] (18) edge  (20);

      \draw[-,  >=stealth] (21) edge  (22);
      \draw[-,  >=stealth] (22) edge  (23);
      \draw[-,  >=stealth] (22) edge  (24);
      \draw[-,  >=stealth] (22) edge  (25);

    \end{tikzpicture} 
    \end{center}
    \caption{The different structures for undirected trees (with labels) on three, four and five nodes.}
    \label{fig::UndirectedTrees345}
\end{figure}

\begin{lemma}
    \label{lem::ColliderTwostar}
    Let $G = (\{0\}\cup [p-1],E)$ be a directed graph on at least three nodes whose undirected skeleton is a generalized two star. Furthermore, assume all nodes not equal to the center either have an edge from it to the center node or is connected (direction irrelevant) to another node that has an edge from it to the center. Then $\mathcal{J}^{G}_{\mathrm{off}}(S,T)$ generically has full rank, and the graph is locally identifiable. 
\end{lemma}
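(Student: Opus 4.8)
The plan is to prove that the polynomial matrix $\mathcal{J}^G_{\mathrm{off}}(S,T)$ attains its maximal possible column rank $|E|$ by exhibiting a single choice of parameters at which some $|E|\times|E|$ minor is nonzero; since that minor is a polynomial in the entries of $A$, $\omega^{(2)}$ and $\omega^{(3)}$, its nonvanishing at one point forces nonvanishing on a Zariski-dense open set, which is exactly generic full rank and hence local identifiability by \cite[Section 16.1]{sullivant2018algebraic}. Throughout I would work with the trek descriptions of the entries in \cref{prop::J_2(Sigma)descriptionOfEntries} and \cref{prop::J_3(T)descriptionOfEntries}: the column indexed by an edge $\alpha\to\beta$ is supported only on rows $(ij)$ with $\beta\in\{i,j\}$ and rows $(ijk)$ with $\beta\in\{i,j,k\}$, and each such entry is a sum of monomials of treks whose leg lengths differ by one (type $\mathcal{T}_{(1,0)}$ or $\mathcal{T}_{(1,1,0)}$).

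First I would record the combinatorial skeleton of a generalized two star (\cref{def::Generalized2star}, characterized in \cref{lem::CharecterizeGen2star}): a center $c$ together with arms, where each arm is either a pendant neighbor of $c$, a triangle $c,v,v'$, or a path $c-v-w$ with a single outer node $w$, and the non-center edges form a matching on $V\setminus\{c\}$. I would then read off what the orientation hypothesis buys us: it guarantees that from every non-center node there is a directed edge into $c$, or into a neighbor pointing to $c$, so that in particular each outer node's arm admits a directed route toward the center. Combined with the self-loops present at every vertex (which can pad a base trek to any longer length), this produces, for each edge, a top node and a pair or triple of directed paths realizing a trek of exactly the leg-length parity demanded by the Jacobian entries.

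Next I would set up an injection $r\colon E\to\{\text{off-diagonal rows}\}$ assigning to each edge $\alpha\to\beta$ a witnessing cumulant entry whose index set contains the sink $\beta$: for edges incident to the center I take a second-order row $(\beta,u)$ with $u$ chosen on the same arm, and for the remaining arm edges a third-order row localized on the arm when no second-order witness is available. The orientation condition ensures that the requisite trek between the chosen indices exists in each case, so the matched entry $\mathcal{J}_{r(e),e}$ is a nonempty sum of trek monomials and hence a nonzero polynomial. I would then study the resulting square submatrix by expanding its determinant $\det=\sum_\sigma \pm\prod_e \mathcal{J}_{r(\sigma(e)),e}$ into monomials in the edge weights and error cumulants, and isolate the distinguished monomial $M$ coming from the identity matching together with the intended trek in each factor. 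Using that a generalized two star is nearly a tree — treks are scarce and the only cycles are self-loops and the triangle arms — I would argue that $M$ is produced by exactly one permutation $\sigma$ and one choice of trek per factor, so its coefficient in $\det$ is $\pm1$ and $\det\not\equiv 0$.

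The main obstacle is precisely this uniqueness argument: because a row $(ij)$ is hit by every edge with sink $i$ and every edge with sink $j$, the submatrix is not literally triangular, and I must rule out that an alternative matching, or an alternative trek in some factor, reproduces $M$. The delicate cases are the triangle arms, which carry the only non-self-loop cycles and hence the only ``extra'' treks, and the rows where second- and third-order contributions interact. I would control these by introducing a multigrading of the parameters that assigns each arm a distinct formal weight, so that $M$ becomes the unique extremal term in the grading and the cancellation question reduces to a comparison of degrees. The small generalized two stars that fall outside the reach of this asymptotic argument (few enough nodes) are covered directly by the computational base case \cref{lem::Case345}.
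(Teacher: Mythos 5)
Your overall frame---exhibit one parameter point at which an $|E|\times|E|$ minor of $\mathcal{J}^G_{\mathrm{off}}(S,T)$ is nonzero, then invoke Zariski density---matches the paper, and your reading of the combinatorics of a generalized two star and of the support of each Jacobian column via \cref{prop::J_2(Sigma)descriptionOfEntries} and \cref{prop::J_3(T)descriptionOfEntries} is correct. But the route you take from there has a genuine gap, and it is exactly the step you yourself flag as the ``main obstacle.'' You propose to expand the determinant of a matched square submatrix and show that a distinguished monomial survives with coefficient $\pm 1$, controlling cancellation by a multigrading that separates the arms. This is not carried out, and it is not clear it can be carried out as stated: your premise that ``treks are scarce'' in a generalized two star is false once the center has outgoing edges, because any two nodes on \emph{different} arms that are reachable from the center by directed paths are joined by an equitrek through the center (padded by self-loops to any length). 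Consequently rows such as $(ij)$ with $i,j$ on different arms are generically nonzero, every edge with sink $i$ or $j$ hits them, and the determinant expansion acquires a large number of competing matchings whose monomials mix parameters from several arms. An arm-by-arm grading does not obviously isolate a unique extremal term in that situation, and the triangle arms and second-/third-order interactions you mention are precisely where this would have to be checked in detail.

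The paper sidesteps all of this with one specialization you are missing: it chooses $A$ with \emph{every outgoing edge of the center set to zero} (keeping all self-loops). This kills all treks between nodes on distinct arms, so the only surviving treks are within a single arm or from an arm into the center. With a suitable choice of rows---second- and third-order cumulant indices localized to each arm---the selected square submatrix becomes genuinely block triangular, with one diagonal block per arm; each block is a submatrix of the modified Jacobian of a two- or three-node graph and has full rank by the computational base cases (\cref{lem::Case345} and \cref{ex::TwoNodeJacobian}). The only bookkeeping left is where to place the column of the self-loop $0\to 0$, which the paper handles by attaching it to one arm and verifying a single Schur-type entry. No determinant expansion or cancellation analysis is needed. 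If you want to complete your own argument, the cleanest fix is to adopt this specialization: after zeroing the center's outgoing edges your injection $r$ can be chosen so that the matrix is block triangular and your ``uniqueness of the matching'' claim becomes immediate rather than something to be fought for with a grading.
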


\begin{proof}
    The proof proceeds by picking a matrix $A$ and showing that the modified Jacobian has full rank for this choice. We choose a zero pattern for $A$ such that all outgoing edges from $0$ are set to $0$. This implies that the only treks in the graph are between $0$ and any other node $i \in [p]$, between a node $i$ and itself, and lastly in a generalized two star there can exist an edge between a pair of non-center nodes $i,j$ as long as their only other edges (other than the self-loops) are edges to the center node. Therefore, there can also exist pairs $i,j \in [p]$, $i,j \neq 0$, such that there is a trek between $i$ and $j$, but no treks between $i$ (resp. $j$) with any other node not equal to $i,j,0$. 
    
    Assume that there are $k$ such pairs of nodes, and assume, without loss of generality, that they are the first $1, \dots, 2k$ nodes such that $i$ is paired with $i + 1$ for $i$ an odd number less than $2k$. We refer to these as the \emph{pair nodes}. The remaining $m = p-1-(2k)$ non-center nodes are not connected to any other nodes except the center; we refer to these as the \emph{pure star nodes}. 

    We make the following partition of the rows and columns.
    For each odd integer $i$ between $1$ and $2k-1$, let $E_i$ denote the set of edges involving at least one node from the $i$th pair, that is,
    \begin{equation*}
        E_i = \left\{ x \rightarrow y \in E \; | \; x,y \in \{0, i, i+1 \} \right\} \setminus \{ 0 \rightarrow 0 \}.
    \end{equation*}
    For every pair except the first, we select a subset of rows 
    \begin{align*}
        R_{i} = &\{(0i), (0(i+1)), (i (i+1)), \\
    &(00i), (00(i+1)), (0ii), (0 i (i+1)), (0 (i+1) (i+1)), (ii (i+1)), (i (i+1) (i+1))  \}
    \end{align*}
    of cardinality $|E_i|$ such that the modified Jacobian on three nodes corresponding to the three node graph on $0,i$ and $i+1$ has full rank, as guaranteed to exist by \cref{lem::Case345}. The corresponding column set is
    \begin{align*}
        C_i = \{ x \rightarrow y \in E_i \},
    \end{align*}
    for $i$ taking odd values from $3$ to $2k-1$. 

    For the first pair, we include the edge $0 \rightarrow 0$ in its columns and therefore need to include one additional row. Thus, it will be a subset of    
    \begin{align*}
        R_{1} = &\{(01), (02), (1 2), (001), (002), (011), (0 1 2), (0 2 2), (11 2), (1 2 2)  \}
    \end{align*}
    of cardinality $|E_i| + 1$ such that the modified Jacobian on three nodes corresponding to the three node graph on $0,1$ and $2$ has full rank, as guaranteed to exist by \cref{lem::Case345}. The corresponding column set is
    \begin{align*}
        C_1 = \{ x \rightarrow y \in E_1 \} \cup \{0 \rightarrow 0 \}. 
    \end{align*}

    For the pure star part, namely the nodes $(p-1)-(2k), \dots, p-1$, we define a row and column set for each node $j$ as follows. We choose any subset of
    \begin{align*}
        R_j = \{ (0j), (00j), (0jj) \},
    \end{align*}
    of cardinality equal to the number of edges involving $j$ (at least two and possibly three). For the column set, we pick all the edges involving $j$. By assumption, this set always includes the self-loop and the incoming edge to $0$, and may additionaly include the outgoing edge from $0$. Thus,
    \begin{align*}
        C_j = \{ j \rightarrow j, j \rightarrow 0 \} \cup (E \cap \{ 0 \rightarrow j \}).
    \end{align*}

    If there is at least one pair, then the specified submatrix of the modified Jacobian will be block triangular:
    \begin{equation*}
        \begin{pNiceArray}{cccccccc}[first-row,first-col, nullify-dots]
            & C_{p-1} & \dots & C_{2k+1} & C_{2k-1} & C_{2k-3} & \dots & C_1  \\ 
            R_{p-1} \;\,\  & J^S_{R_{p-1}, C_{p-1}} & & & & & &  \\ 
            \vdots \quad\ & & \rotatebox[origin=c]{20}{$\ddots$} & & & & &  \\ 
            R_{2k+1} & & &   J^S_{R_{2k+1}, C_{2k+1}}   & & &  &  \\ 
            R_{2k-1} & & & &  J^P_{R_{2k-1}, C_{2k-1}} & & &  \\
            R_{2k-3} & & & & & J^P_{R_{2k-3}, C_{2k-3}} & &  \\ 
            \vdots \quad\ & &  \text{\Large 0} & & & & \rotatebox[origin=c]{20}{$\ddots$} & 
            \\ 
            R_1 \;\,\ & & & & & & & J^P_{R_1, C_1}  & \\
        \end{pNiceArray}
    \end{equation*}

    The zero-pattern follows from  \cref{prop::J_2(Sigma)descriptionOfEntries} and \cref{prop::J_3(T)descriptionOfEntries} as in the proof of \cref{thm:LocIdAllGraphs}. Moreover, above the diagonal, nonzero entries can occur only in column $C_1$. 
    By \cref{lem::Case345}, all the pair matrices $J^P_{R_i, C_i}$ for $i$ all the odd numbers from $1$ to $2k-1$ have full rank. For all the pure star matrices  $J^S_{R_i, C_i}$, it follows by direct computation in \cref{ex::TwoNodeJacobian} that they have full rank, since each corresponds to a specific submatrix (with at most three columns) of the joint Jacobian for a connected graph on two nodes with both self-loops. This concludes the proof in the case where there is at least one pair. 

    If there are no pairs, so that the graph has a star skeleton, we need to place the $0 \rightarrow 0$ column in one of the column sets corresponding to a single node, and correspondingly add a row. We choose the nodes set, which now corresponds to node $1$, since there were no pairs. We select a subset of 
    \begin{align*}
        R_1 = \{ (01), (001), (011), (122) \}
    \end{align*}
    of cardinality equal to the number of edges involving node $1$ (at least two and possibly three) plus one, where at least one of the rows is $(122)$. The corresponding column set is
    \begin{align*}
        C_1 = \{ 1 \rightarrow 1, 1 \rightarrow 0 , 0 \rightarrow 0\} \cup (E \cap \{ 0 \rightarrow j \}).
    \end{align*}

    In this case, the specified submatrix of the modified Jacobian will be block triangular and takes the form
    \begin{equation*}
        \begin{pNiceArray}{ccccc}[first-row,first-col, nullify-dots]
            & C_{p-1} & \dots & C_{2} & C_{1} \\ 
            R_{p-1} \;\,\  & J^S_{R_{p-1}, C_{p-1}} & & &   \\ 
            \vdots \quad\ & & \rotatebox[origin=c]{20}{$\ddots$} & &   \\ 
            R_{2} \;\,\  & \text{\Large 0}  &  &   J^S_{R_{2}, C_{2}}   &   \\ 
            R_{1} \;\,\  & & & &  J_{{R_1}, C_{1}}   \\
        \end{pNiceArray}
    \end{equation*}

    Again, the zero-pattern follows from \cref{prop::J_2(Sigma)descriptionOfEntries} and \cref{prop::J_3(T)descriptionOfEntries}. All the matrices on the block diagonal except for $J_{R_1, C_1}$ were already shown to have full rank in the previous case. 
    For $J_{R_1, C_1}$, notice that it is block triangular of the form 
    \begin{equation*}
        \begin{pNiceArray}{ccccc}[first-row,first-col, nullify-dots]
            & 0 \rightarrow 0 & 1 \rightarrow 0 & 1 \rightarrow 1 & 0 \rightarrow 1 \\ 
            (01) \;\,\  &  & & &   \\ 
            (001) \;\,\ & & \mathcal{J}_{\mathrm{off}}^{G'}(S,T)  & &   \\ 
            (011) \;\,\  &   &  &      &   \\ 
            (122) \;\,\  &  & 0 &  &  J^G_3(T,a)_{(122),(0 \rightarrow 1)}   \\
        \end{pNiceArray},
    \end{equation*}
    where $G'$ is the graph on nodes $0$ and $1$ with both self-loops and the edge $1 \rightarrow 0$. This matrix has generically full rank because $\mathcal{J}_{\mathrm{off}}^{G'}(S, T)$ has full rank, as $G'$ is locally identifiable by Example \ref{ex::TwoNodeJacobian}, and $J^G_3(T,a)_{(122),(0 \rightarrow 1)}$ is generically non-zero in the graph $G$ due to existence of equitreks between $0$ and $2$. 
\end{proof}

\begin{lemma} \label{lem:generalized2Star}
    Let $G = (V,E)$ be a directed graph on $p\geq 3$ nodes whose undirected skeleton is a generalized two star. Then $\mathcal{J}^{G}_{\mathrm{off}}(S,T)$ generically has full rank, and the graph is locally identifiable.  
\end{lemma}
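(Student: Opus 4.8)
For $3 \leq p \leq 5$ the claim is already contained in \cref{lem::Case345}, so the real content is the case $p \geq 6$, where \cref{def::Generalized2star} and the structural analysis in the proof of \cref{lem::CharecterizeGen2star} apply. The plan is to follow the blueprint of \cref{lem::ColliderTwostar}, extending it to \emph{all} orientations and to the petal shapes that lemma does not treat (most importantly the length-two petals $c - v - w$ whose outer vertex is not adjacent to the center $c$). The first step is to record the fine structure implicit in the proof of \cref{lem::CharecterizeGen2star}: deleting the center $c$ leaves a collection of \emph{petals}, each joined to the rest of the graph only through $c$, and each petal is one of three shapes---a single neighbor of $c$ (a leaf), a triangle on $\{c,v,w\}$, or a path $c - v - w$. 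Consequently every edge of $G$ other than the self-loop at $c$ lies in a unique petal, and each petal together with $c$ and the incident self-loops is a directed graph on at most three vertices with all self-loops present.

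The second step is to exhibit, for one suitable $A$ respecting the zero pattern of $G$, a set of $|E|$ cumulant rows whose square submatrix of $\mathcal{J}^G_{\mathrm{off}}(S,T)$ is block triangular with the petal blocks on the diagonal. To each petal $P$ I would assign the columns of its own edges, and rows indexed by second- and third-order off-diagonal cumulants supported on $P \cup \{c\}$, placing the self-loop $c \to c$ (with one extra row) into a single distinguished petal exactly as the $0 \to 0$ column is handled in \cref{lem::ColliderTwostar}. Each diagonal block is then the off-diagonal modified Jacobian of a graph on at most three vertices with all self-loops, hence of full rank by \cref{lem::Case345}; for the distinguished block one checks the relevant three-node submatrix as in that lemma. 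The point making the blocks decouple is the trek description of \cref{prop::J_2(Sigma)descriptionOfEntries,prop::J_3(T)descriptionOfEntries}: a column indexed by $\alpha \to \beta$ can contribute to a row supported on $P \cup \{c\}$ only if its sink $\beta$ lies in $P \cup \{c\}$. Since the sink of every non-self-loop edge of a petal is internal to that petal, the only columns that can produce off-block entries are those of edges whose sink is $c$, and such an entry further requires an equitrek joining a node of the row's petal to the edge's source; I would order the petals and choose the specialization so that these residual entries occupy a single triangular corner, as in \cref{lem::ColliderTwostar}.

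The main obstacle is precisely the interaction mediated by the center when several petals are oriented so that $c$ is not a sink, a configuration absent from the inward-oriented collider case of \cref{lem::ColliderTwostar}. Here I would use that specializing the \emph{value} of an edge to zero does not annihilate its Jacobian column (a derivative that depends on other treks), so one retains the freedom to kill cross-petal equitreks by zeroing the out-edges of $c$ along inward petals while keeping nonzero values along outward petals; the edges so set to zero remain identifiable because their columns are still nonzero through treks terminating at $c$ (for instance the entry of the column $c \to v$ in row $(v,c)$ coming from $\mathcal{T}_{(1,0)}(c,c)$). The remaining work is a finite verification, over the three petal shapes and all orientations of their at most two edges, that the prescribed rows index genuine equitreks for the chosen $A$---so each diagonal block is nondegenerate---and that the surviving center-incident cross terms are confined to the distinguished block. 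I expect the bookkeeping of orientations inside the triangle and path petals, rather than any new idea, to be the delicate part, and I would discharge it in the same computer-algebra setup used for \cref{lem::Case345}.
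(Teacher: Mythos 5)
Your decomposition of $G$ into a center $c$ plus ``petals'' of the three shapes is correct and consistent with the structure extracted in \cref{lem::CharecterizeGen2star}, and for $3\le p\le 5$ the reduction to \cref{lem::Case345} matches the paper. But your route for $p\ge 6$ --- a single global block-triangularization of $\mathcal{J}^G_{\mathrm{off}}(S,T)$ at one witness $A$, with one diagonal block per petal --- differs from the paper, which instead inducts on $p$ by peeling off one purely outward petal (one of the five configurations in which all edges at the center are outgoing) and invoking \cref{lem::ColliderTwostar} only as the base case. The difference is not cosmetic: the paper's peeling is arranged so that the cross-block vanishing it needs (the new petal's columns vanish in the old rows) holds for \emph{every} $A$, because every edge of a purely outward petal has its sink outside $V(G')$ and \cref{prop::J_2(Sigma)descriptionOfEntries,prop::J_3(T)descriptionOfEntries} then force those entries to be identically zero. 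This lets the paper certify each diagonal block's generic full rank with its \emph{own} specialization and intersect dense open sets.

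Your version loses exactly this feature, and that is where the gap lies. The block-triangularity you need (no equitreks between distinct inward/mixed petals, one-directionality of the center-mediated cross terms) holds only on the special locus where you have zeroed the out-edges of $c$ into the inward and mixed petals; it is not an identity in $A$. Hence you cannot argue ``each block is generically full rank, the matrix is block-triangular, done'' --- you must exhibit full rank of the \emph{entire} matrix at a single point of that special locus. At such a point, the diagonal block of any petal $P$ into which $c$ retains a nonzero outgoing edge is \emph{not} the off-diagonal modified Jacobian of the two- or three-node graph on $P\cup\{c\}$ verified in \cref{lem::Case345}: since the inward petals' edges into $c$ and their noise cumulants must stay nonzero (otherwise their own blocks vanish), there are equitreks with top $w$ in an inward petal passing through $c$ into $P$, so the cumulants $s_{l\alpha}$, $t_{lm\alpha}$ appearing in the entries of $P$'s block carry correction terms proportional to $\omega^{(2)}_w,\omega^{(3)}_w$ that depend on the global petal configuration. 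Your proposed finite verification ``over the three petal shapes and all orientations'' examines each petal in isolation and therefore cannot certify full rank of these contaminated blocks. To repair the argument you would either have to control the contamination uniformly (which amounts to redoing the computation with extra free parameters attached to $c$), or restructure the argument so that the cross-block vanishing is structural --- which is precisely the paper's inductive peeling of purely outward petals.
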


\begin{proof}
    The proof proceeds by induction. The base case contains generalized two stars (including the stars as a special case), which are covered by \cref{lem::ColliderTwostar}, as well as generalized two stars on at most five nodes, which are handled by \cref{lem::Case345}.

    Now, let $p \geq 6$ and let $G = (\{0\}\cup [p-1],E)$ be a generalized two star on $p$ nodes, with $0$ denoting the center. If $G$ is a star or a generalized two star such that all non-center nodes either have an edge from it to the center or is connected (direction irrelevant) to another node that has an edge from it to the center, then it is covered by \cref{lem::ColliderTwostar}. Therefore, assume that $G$ is a graph not covered by the base case. This implies that one of the five two- or three-node subgraphs in \cref{fig::CasesForGen2star} is a subgraph of $G$; denote this subgraph by $H$. Moreover, if we remove the nodes not equal to the center, the remaining graph would have at least three nodes. By the induction hypothesis, if the subgraph $H$ is removed (except $0$ and its self-loop), this remaining graph $G'$ is locally identifiable, since it has $p-1$ or $p-2$ nodes. 
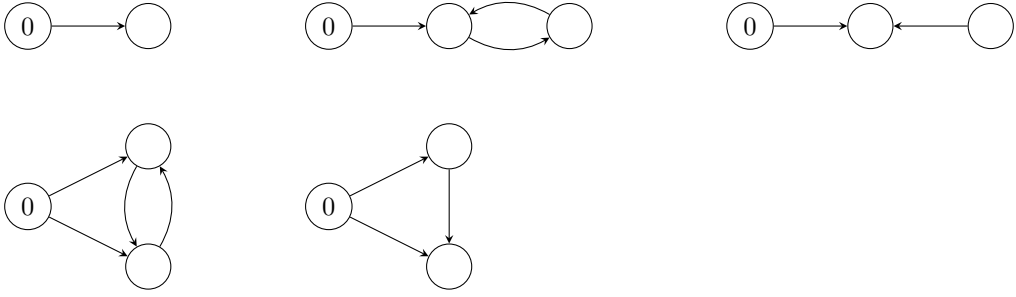
\begin{figure}[H]
    \centering
\begin{center}
    \begin{tikzpicture}[scale = 0.8]
      \node[circle, draw, minimum size=0.6cm] (0) at (0,0) {0};
      \node[circle, draw, minimum size=0.6cm] (1) at (2,0) {};
      
      \node[circle, draw, minimum size=0.6cm] (2) at (5,0) {0};
      \node[circle, draw, minimum size=0.6cm] (3) at (7,0) {};
      \node[circle, draw, minimum size=0.6cm] (4) at (9,0) {};

      \node[circle, draw, minimum size=0.6cm] (5) at (12,0) {0};
      \node[circle, draw, minimum size=0.6cm] (6) at (14,0) {};
      \node[circle, draw, minimum size=0.6cm] (7) at (16,0) {};

      \node[circle, draw, minimum size=0.6cm] (8) at (0,-3) {0};
      \node[circle, draw, minimum size=0.6cm] (9) at (2,-2) {};
      \node[circle, draw, minimum size=0.6cm] (10) at (2,-4) {};

      \node[circle, draw, minimum size=0.6cm] (11) at (5,-3) {0};
      \node[circle, draw, minimum size=0.6cm] (12) at (7,-2) {};
      \node[circle, draw, minimum size=0.6cm] (13) at (7,-4) {};

      \draw[->,  >=stealth] (0) edge  (1);
      
      \draw[->,  >=stealth] (3) edge[bend right] (4);
      \draw[->,  >=stealth] (4) edge[bend right] (3);
      \draw[->,  >=stealth] (2) edge (3);

      \draw[->,  >=stealth] (5) edge (6);
      \draw[->,  >=stealth] (7) edge (6);

      \draw[->,  >=stealth] (8) edge (9);
      \draw[->,  >=stealth] (8) edge (10);
      \draw[->,  >=stealth] (9) edge[bend right] (10);
      \draw[->,  >=stealth] (10) edge[bend right] (9);

      \draw[->,  >=stealth] (11) edge (12);
      \draw[->,  >=stealth] (11) edge (13);
      \draw[->,  >=stealth] (12) edge (13);

    \end{tikzpicture} 
    \end{center}
    \caption{Possible types of extensions from the center in the induction step, shown without self-loops.
    }
    \label{fig::CasesForGen2star}
\end{figure}

    Since all edges involving $0$ in the graphs from Figure \ref{fig::CasesForGen2star} are outgoing, adding any of these five subgraphs to $G'$ (to obtain $G$) will not change any of the treks among the nodes in $G'$, and therefore does not affect the corresponding cumulants, Jacobian or modified Jacobian. Consequently, by the induction hypothesis, all edges in the subgraph $G'$ are indeed locally identifiable in $G$.

    The final step is to conclude that the remaining edges are locally identifiable. There are at least two and at most six such edges, depending on the case from \cref{fig::CasesForGen2star} (including self-loops). Consider the Jacobian of the second- and third-order cumulants involving only the center node $0$ and the potential one or two extra nodes, denoted $\alpha$ and $\beta$. As before, it suffices to consider the non-diagonal cumulant rows of the modified Jacobian. The relevant cumulants are indexed by
    \begin{align*}
        R = \{(0 \alpha), (0 \beta), (\alpha \beta), (00\alpha), (00 \beta), (0 \alpha \alpha), (0 \alpha \beta), (0 \beta \beta), (\alpha \alpha \beta) (\alpha \beta \beta) \},
    \end{align*}
    or, in the case where only one node is added, only the ones involving $\alpha$ and $0$. 
    In all cases, there are more cumulants than edges added. Furthermore, these cumulants can be viewed as equations only in the new edges (those between $0$, $\alpha$ and $\beta$), with the remaining variables being either other cumulants or known $a$-parameters which are already identified. Here, we highlight that all the cumulants not involving combinations of $0$, $\alpha$ and $\beta$ were not affected by adding $\alpha$ and $\beta$. Thus, to estabilsh local identifiability of the entire graph, it suffices to prove local identifiability of the edges between  $0$, $\alpha$ and $\beta$, assuming the edges in $G'$ are already known. 
    
    Therefore, we consider the submatrix of the modified Jacobian indexed by rows in $R$ and columns corresponding to edges between $0$, $\alpha$ and $\beta$ (excluding $0 \rightarrow 0$), denoted $\mathcal{J}^{G \setminus G'}_{\mathrm{off}}$. We wish to prove that this submatrix generically has full rank. Let us choose $A$ generically such that all edges not involving only $0$, $\alpha$ and $\beta$ are set to zero. Then the only treks will be among $0$, $\alpha$ and $\beta$, and $\mathcal{J}^{G \setminus G'}_{\mathrm{off}}$ will be exactly equal to the submatrix of the modified Jacobian of $H$ (on two or three nodes), $\mathcal{J}^{H}_{\mathrm{off}}$, with the column $0 \rightarrow 0$ removed. Since $H$ is either a DAG or a graph on three nodes, $\mathcal{J}^{H}_{\mathrm{off}}$ has full column rank by \cref{lem::Case345} and \cref{prop:IdAllSelfLoops}. It will also have full column rank if the $0 \rightarrow 0$ column is removed. Therefore, $\mathcal{J}^{G \setminus G'}_{\mathrm{off}}$ generically has full column rank and the edges among $0$, $\alpha$ and $\beta$ (except $0 \rightarrow 0$) can be locally identified, assuming all edges in $G'$ had already been locally identified, as guaranteed by the induction hypothesis. This completes the induction step. 
\end{proof}

\begin{example}
\label{ex::TwoNodeJacobian}
     Let $G = (\{0,1\}, E)$ be either the complete graph on two nodes or the complete graph without the edge $1 \rightarrow 0$. Then any $3 \times 3$ submatrix of the $3 \times |E|$ matrix $\mathcal{J}^G_{\mathrm{off}}(S, T)$ generically has rank $3$. We see this by letting 
    \begin{equation*}
        A = \begin{pmatrix}
            1/2 & 0 \\
            1 & 1/2 \\
        \end{pmatrix},
    \end{equation*}
    and computing the rank for all $3\times3$ submatrices of $\mathcal{J}^G_{\mathrm{off}}(S, T)$ in Maple\footnote{\url{ https://github.com/cecilie2424/Local-Identifiability-in-Non-Gaussian-Discrete-Lyapunov-Models}.}.
\end{example}

\begin{lemma}
\label{lem::FullrankMmatrix}
    The matrix $M_{G_i \rightarrow g^j_{k}}$ with $i \neq j$, $i,j \in \{1,2\}$, which appears in the proof of \cref{thm:LocIdAllGraphs} as a submatrix of the modified Jacobian, generically has full rank.
\end{lemma}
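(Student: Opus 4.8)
The plan is to make the block $M_{G_1 \to g^2_j}$ fully explicit and then reduce the rank claim to a single convenient evaluation. Recall from the proof of \cref{thm:LocIdAllGraphs} that $M_{G_1 \to g^2_j}$ is the $n_1 \times n_1$ block whose rows are indexed by the third-order cumulants $(g^1_r g^1_r g^2_j)$ and whose columns are indexed by the cross-edges $g^1_s \to g^2_j$, for $r,s = 1,\dots,n_1$. First I would read off its entries from \cref{prop::J_3(T)descriptionOfEntries}. With sink $\beta = g^2_j \in V_2$ and both remaining row indices equal to $g^1_r \in V_1$, the three Kronecker-delta factors collapse to the single one matching $\beta$ to the third index, so
\[
\big(M_{G_1 \to g^2_j}\big)_{rs} = \sum_{l,m} a_{g^1_r l}\, a_{g^1_r m}\, t_{l\,m\,g^1_s}.
\]
The key structural point is that the surviving sum involves only quantities internal to $G_1$: the parents $l,m$ of $g^1_r$ lie in $V_1$ because $G'$ is disconnected, and $g^1_s \in V_1$. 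In particular the block does not depend on $j$.

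Since $\det M_{G_1 \to g^2_j}$ is a rational function of the parameters, it suffices to exhibit one parameter point at which it is nonzero; generic full rank follows at once. I would take $A$ to be diagonal — setting every off-diagonal edge weight of $A$ to zero (cross-edges are already absent, as $A$ has the sparsity pattern of $G'$) while keeping the self-loops $a_{g^1_r g^1_r} = d_r$ with $0 < |d_r| < 1$ — and choose $w^{(3)}_{g^1_r} \neq 0$ for each $r$. This is an admissible point: the matrix is Schur stable and lies in $\mathbb{R}^{E'}_{\mathrm{stab}}$ since every vertex of $G'$ carries a self-loop, and the cumulants are finite. For diagonal $A$ one has $a_{g^1_r l} = d_r\,\delta_{g^1_r}(l)$, so the double sum collapses to $(M)_{rs} = d_r^2\, t_{g^1_r g^1_r g^1_s}$. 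By the trek rule \cref{prop::trekrule}, a $3$-equitrek between $g^1_r, g^1_r, g^1_s$ requires a common source joined to all three sinks by equal-length paths; with only self-loops available this forces $r = s$, giving $t_{g^1_r g^1_r g^1_s} = 0$ for $r \neq s$ and $t_{g^1_r g^1_r g^1_r} = w^{(3)}_{g^1_r}/(1-d_r^3) \neq 0$. Hence $M_{G_1 \to g^2_j}$ is diagonal with nonzero diagonal entries, so its determinant is nonzero and the block has generically full rank.

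By symmetry, exchanging the roles of the two components gives the identical argument for $M_{G_2 \to g^1_k}$, covering both cases $i \neq j$ of the statement. I do not expect a substantial obstacle: the only things needing care are verifying that the chosen diagonal matrix is a valid parameter with the prescribed sparsity (immediate, since existing edge weights may be set to zero and all self-loops are present) and confirming that the nonvanishing entries genuinely lie inside one component. The real value of this particular choice is its uniformity — it depends only on the presence of self-loops in each component and not on any further structure of $G_1$ or $G_2$ — which is precisely what the proof of \cref{thm:LocIdAllGraphs} relies on when it asserts that the generic choices making the $M$-blocks full rank do not depend on $G$.
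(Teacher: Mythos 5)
Your proposal is correct and follows essentially the same route as the paper: both arguments specialize to a diagonal $A$, observe via \cref{prop::J_3(T)descriptionOfEntries} that the block collapses to a diagonal matrix with entries $a_{g^i_l g^i_l}^2\, t_{g^i_l g^i_l g^i_l}$, and invoke the trek rule to see these diagonal entries are nonzero, giving generic full rank. The only difference is presentational — you spell out the explicit value $w^{(3)}_{g^1_r}/(1-d_r^3)$ and the Schur-stability check, which the paper leaves implicit.
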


\begin{proof}
    Let $A$ be diagonal. Then $M_{G_i \rightarrow g^j_{k}}$ is also diagonal with generically non-zero diagonal entries: 
    By \cref{prop::J_3(T)descriptionOfEntries}, $M_{G_i \rightarrow g^j_{k}}$ is an $n_i \times n_i$ matrix with rows indexed by $R_{G_i \rightarrow g^j_{k}}$ and columns indexed by $C_{G_i \rightarrow g^j_{k}}$, where the $(l,m)$-th entry is given by 
    \begin{align*}
         \left( M_{G_i \rightarrow g^j_{k}}\right)_{l,m} = \left( J_3^G(T,a)) \right)_{(g^i_{l} g^i_l g^j_k), (g^i_m \rightarrow g^j_{k})} = \sum_{ x \in  \text{pa}(g^i_l)} \sum_{y \in \text{pa}(g^i_l)} a_{g^i_l x} a_{g^i_l y} t_{g^i_m x y}. 
    \end{align*}
    Thus, the only way for the above to be non-zero for diagonal $A$ is exactly when $g^i_l = x = y = g^i_m$, since $A$ being diagonal implies that $T$ is diagonal by the trek rule. Consequently, $M_{G_i \rightarrow g^j_{k}}$ is indeed diagonal, with diagonal entries $a_{g^i_l g^i_l}^2 t_{g^i_l g^i_l g^i_l}$ for $l = 1, \dots, n_i$. For $A$ diagonal with non-zero diagonal entries (and $\Omega^{(3)}$ non-zero diagonal), these diagonal entries are non-zero by the trek rule. Therefore, $M_{G_i \rightarrow g^j_{k}}$ generically has full rank. 
\end{proof}

\section{Proofs for \texorpdfstring{\cref{sec::Equations}}{Section 6}} \label{appendix::Defining equations}

\subsection{Proofs for \texorpdfstring{\cref{sec::equations_polytree_single_source}}{Section 6.1}}

\begin{proof}[Proof of \cref{prop:: levels recovery}] 
For a vertex $i\in V$, let  $|\tau_i|$ denote the length of the shortest path from $0$ to $i$, and let $a^{\tau_i}$ denote the corresponding path monomial. In the following, we assume that $v_i^{(2)}$, $v_i^{(3)}$, and $a_{ij}$ do not vanish for all $i, j\in V$, since the equations hold generically.
    \begin{enumerate}
        \item By the monomial parametrization, 
        $$s_{ij} = v^{(2)}_{top(\tau(i, j))}a^{\tau_1}a^{\tau_2}, \qquad s_{ii} =  v^{(2)}_{i}.$$
        Since each $a_{ij}$ is nonzero, the polynomials $s_{ij}^3t_{iii}^2 - s_{ii}^3t_{iij}t_{ijj}$ vanish for every $j$ if and only if for every $j$,
        $v^{(2)}_{top(\tau(i, j))} = v^{(2)}_{i},$
        which is equivalent to $i$ being the source. 
        \item By the monomial parametrization, 
        $$s_{0i} = v^{(2)}_0 a_{00}^{|\tau_i|}a^{\tau_i}, \qquad t_{00i} = v^{(3)}_{0}a_{00}^{2|\tau_i|}a^{\tau_i}.$$
        Then the polynomial becomes 
        $$v^{(2)}_0 a_{00}^{|\tau_i|}a^{\tau_i}\cdot v^{(3)}_{0}a_{00}^{2|\tau_j|}a^{\tau_j} - v^{(2)}_0 a_{00}^{|\tau_j|}a^{\tau_j}\cdot v^{(3)}_{0}a_{00}^{2|\tau_i|}a^{\tau_i} =0. $$

        Since each $a_{ij}$, $v_{i}^{(2)}$ and $v_i^{(3)}$ is nonzero, this holds if  and only if 
        $a_{00}^{|\tau_i|} - a_{00}^{|\tau_j|} = 0$, which implies that $|\tau_i| = |\tau_j|.$   
        \item Since $i$ and $j$ are on different levels, all equitreks between them must start at the source. Therefore,
        $$s_{ij} = v^{(2)}_0a^{\tau_i}a^{\tau_j} a_{00}^{||\tau_i|-|\tau_j||}.$$
        The polynomial becomes
        $$v^{(2)}a^{\tau_i}a^{\tau_j} a_{00}^{||\tau_i|-|\tau_j||} \cdot v^{(3)}_{0}a_{00}^{2|\tau_j|}a^{\tau_j} - v^{(2)}_0 a_{00}^{|\tau_j|}a^{\tau_j} \cdot v^{(3)}_{0}a^{\tau_i}a^{\tau_j} a_{00}^{\max(|\tau_i|, |\tau_j|) + ||\tau_i|-|\tau_j||} =0$$
        The nonvanishing of the parameters again implies that 
        $ a_{00}^{|\tau_j|} - a_{00}^{\max(|\tau_i|, |\tau_j|)} =0$. So we conclude that $ |\tau_j| = \max(|\tau_i|, |\tau_j|).$ \qedhere
    \end{enumerate}
\end{proof}

\begin{proof}[Proof of \cref{prop: top trek recovery}]
    $\Leftarrow$: Suppose $l$ is the top of the shortest equitrek between $i$ and $j$, and let $\tau_i$ and $\tau_j$ denote the paths from $l$ to $i$ and $j$, respectively. By the monomial parametrization, we have 
    $$s_{ij} = s_{ll}a^{\tau_i}a^{\tau_j}, \qquad s_{0i} = s_{0l}a^{|\tau_i|}_{00}a^{\tau_i}, \qquad t_{ljj}=t_{lj}\frac{a^{\tau_j}}{a^{|\tau_i|}_{00}}.$$
    As a result,
    $$s_{0l}s_{ij}t_{llj}-s_{0i}s_{ll}t_{ljj} = s_{0l}s_{ll}a^{\tau_i}a^{\tau_j}t_{llj} - s_{0l}a^{|\tau_i|}_{00}a^{\tau_i}s_{ll}t_{llj}\frac{a^{\tau_j}}{a^{|\tau_i|}_{00}} = 0.$$
    $\Rightarrow$: When writing the parametrization of variables, note that $s_{ll}=v_{l}^{(2)}$. For the polynomial to vanish, this parameter $v_{l}^{(2)}$ must appear in the first term with degree one. The variables in the first terms are $s_{0l}$, $t_{llj}$ and $s_{ij}$, so the parameter $v_l^{(2)}$ might only appear from the parametrization of $s_{ij}$. That implies that $l$ is the top of the shortest equitrek between $i$ and $j$.
\end{proof}

\begin{proof}[Proof of \cref{lem: swapping}] 
    Let $k$ denote the top vertex of the shortest equitrek between $i$ and $j$. Let $A$ and $B$ be the weighted adjacency matrices of $G$ and $H$, respectively. Let $p(i)$ and $p(j)$ denote the parents of $i$ and $j$ in $G$, and let $c(i)$ and $c(j)$ denote the children of $i$ and $j$ in $G$, if they exist. The matrices $A$ and $B$ differ in eight entries.  After swapping the edges, the entries $a_{ip(i)}, a_{jp(j)}, a_{c(i)i}$ and $a_{c(j)j}$ become zero, while the new edges $b_{jp(i)}, b_{ip(j)}, b_{c(j)i}, b_{c(i)j}$ appear in $B$. In the special case where $p(i)=p(j)$, only four elements change, since $a_{ip(i)} = b_{ip(j)}$ and $a_{jp(j)} = b_{jp(i)}$; however, this case is already covered by the general one.

    \begin{figure}[H]
    \centering
    \begin{center}
    \begin{tikzpicture} [vertex/.style args = {#1 #2}{circle, draw, minimum size=0.2cm, label=#1:#2}]
      \node[vertex=left $p(i)$] (1) at (-0.5,1) {};
      \node[vertex=right $p(j)$] (2) at (0.5,1) {};
      \node[vertex=left $i$] (3) at (-0.5,0) {};
      \node[vertex=right $j$] (4) at (0.5,0) {};
      \node[vertex=left $c(i)$] (5) at (-0.5,-1) {};
      \node[vertex=right $c(j)$] (6) at (0.5,-1) {};

      \node[vertex=left $p(i)$] (7) at (5.5,1) {};
      \node[vertex=right $p(j)$] (8) at (6.5,1) {};
      \node[vertex=left $i$] (9) at (5.5,0) {};
      \node[vertex=right $j$] (10) at (6.5,0) {};
      \node[vertex=left $c(i)$] (11) at (5.5,-1) {};
      \node[vertex=right $c(j)$] (12) at (6.5,-1) {};

      \draw[->,  >=stealth] (1) edge  (3);
      \draw[->,  >=stealth] (2) edge  (4);
      \draw[->,  >=stealth] (3) edge  (5);
      \draw[->,  >=stealth] (4) edge  (6);

      \draw[->,  >=stealth] (7) edge  (10);
      \draw[->,  >=stealth] (8) edge  (9);
      \draw[->,  >=stealth] (10) edge  (11);
      \draw[->,  >=stealth] (9) edge  (12);

    \end{tikzpicture} 
    \end{center}
    \caption{Swapping operation for vertices $i$ and $j$.}
    \label{fig::SwappingVertices}
    \end{figure}
    
    Let $P_G$ and $P_H$ denote the $(p+|E(G)|)\times \frac{p(p-1)}{2}$ shortest equitrek parametrization matrices of the second-order cumulant models corresponding to $G$ and $H$, respectively. The first $p$ rows are labeled by $v_i^{(2)}$ for $i\in \{0\}\cup [p-1]$, and the remaining rows are labeled by edges. We will show that these matrices are row equivalent. A similar reasoning works for higher-order cumulant models.

    Notice that the first $p$ rows of $P_G$ and $P_H$ coincide, as the top vertices of all shortest equitreks remain unchanged after the swapping operation. Indeed, consider two vertices $u, v\in V(G)$.
    \begin{itemize}
        \item If $u$ and $v$ have different levels, then the top vertex of their shortest equitrek is 0. Since the swap does not alter vertex levels, $u$ and $v$ still lie at different levels in $H$, and their top vertex remains $0$.
        \item If $u$ and $v$ have the same level and are located above $i$ and $j$, then the top of their shortest equitrek also does not change after the swap, since the equitrek between them does not use any of the changed edges.
        \item Suppose $u$ and $v$ lie on the same level and are located below $i$ and $j$. If the shortest equitrek in $G$ passes through both $i$ and $j$, then its top vertex must be $k$, which remains unchanged after the swap. Both paths of the trek can not pass through either $i$ or $j$, since these vertices have at most one child; such a trek would therefore not be the shortest. One of the paths might pass through $i$ or $j$, but then there exists a corresponding trek in $H$, which uses the swapped edges.
        \item The last case is when $u$ and $v$ lie on the same level as $i$ and $j$. If they do not coincide with neither $i$ nor $j$, then the shortest equitrek between them in $G$ does not use any of the edges affected by the swap, and hence they have the same equitrek in $H$. If the pair $u, v$ coincides with $i, j$, then the top of the shortest equitrek remains unchanged after swapping. Finally, suppose without loss of generality that $u=i$ and $v\neq j$. Let $k'$ denote the top of the trek between $i$ and $v$ in $G$. By the statement, $k'$ lies above $k$, and the path from $k'$ to $i$ passes through $k$. A corresponding shortest equitrek between $i$ and $v$ in $H$ can then be constructed as follows: the path from $k'$ to $v$ remains unchanged, while the path from $k'$ to $i$ follows the path from $k'$ to $k$, then to $p(j)$, and finally uses the swapped edge to reach $i$.
        \end{itemize}

    Some rows of $P_H$ coincide with some rows of $P_G$. We now identify all such equal rows. First, observe that $(P_G)_{a_{00}} = (P_H)_{b_{00}}.$
    For a column $s_{uv}$, the corresponding entry in this row corresponds to the number of self-loops used in a trek between $u$ and $v$, which is determined by the levels of the vertices.
    
    A row corresponding to $a_{uv}$ in $G$ is equal to a row corresponding to $b_{uv}$ in $H$ if the edge is not adjacent to either $i$ or $j$. This follows since for each column index $s_{xy}$, the entry $(P_G)_{a_{uv}, s_{xy}}$ equals the number of times the edge $a_{uv}$ is used in the shortest equitrek between $x$ and $y$.

    Finally, by the same reasoning, the following rows also coincide: 
    $$ (P_H)_{b_{c(i)j}}=(P_G)_{a_{c(i)i}}, \qquad (P_H)_{b_{c(j)i}}=(P_G)_{a_{c(j)j}}.$$
    The only remaining rows that change correspond to edges leading to $i$ and $j$ from their parents.
    We claim that  \[(P_H)_{b_{ip(j)}} = (P_G)_{a_{ip(i)}} - (P_G)_{a_{c(i)i}}+(P_G)_{a_{c(j)j}}.\]
    We verify this equality column by column. Fix some column corresponding to $s_{uv}$.
    \begin{itemize}
        \item First, suppose that $(P_H)_{b_{ip(j)}, s_{uv}}=0$. Then none of the paths of the equitrek between $u$ and $v$ pass through $i$ in $H$. This implies that none of the paths of the equitrek pass through $c(j)$ in $H$, and therefore none pass through $c(j)$ in $G$. Hence, $(P_G)_{a_{c(j)j}, s_{uv}}=0$. Moreover, $(P_G)_{a_{ip(i)}, s_{uv}}=(P_G)_{a_{c(i)i}, s_{uv}}$ unless either $u$ or $v$ coincides with $i$. But if $u$ or $v$ were $i$, then the corresponding entry $(P_H)_{b_{ip(j)}, s_{uv}}$ would be nonzero, which contradicts the assumption. Therefore, the right-hand side also evaluates to zero.
        
        \item Next, suppose that $(P_H)_{b_{ip(j)}, s_{uv}}=1$. This means that one of the paths of the trek between $u$ and $v$ crosses $i$ in $H$. This happens if, for example, the path of the trek stops there, so one of $u$ and $v$ coincides with $i$. In this case,  $(P_G)_{a_{c(j)j}, s_{uv}}=0$, since none of the treks go through $c(j)$. Moreover,  $(P_G)_{a_{ip(i)}, s_{uv}} = (P_G)_{a_{c(i)i}, s_{uv}}+ 1$, so the claimed equality holds in this case.

        In the other case, neither $u$ nor $v$ coincide with $i$. Then one of the paths must pass through $c(j)$ in $H$. This implies that in $G$, one of the paths also passes though $c(j)$, so $(P_G)_{a_{c(j)j}, s_{uv}}=1$. On the other hand, the entries of the rows $(P_G)_{a_{ip(i)}}$ and $(P_G)_{a_{c(i)}i}$ are equal.  

        \item Finally, suppose that $(P_H)_{b_{ip(j)}, s_{uv}}=2$. This means that both paths of the equitrek pass through $i$ in~$H$. By the same reasoning as above, equality holds.
    \end{itemize}
    We have verified that the claimed equality holds for all columns of $(P_H)_{b_{ip(j)}}$. By an analogous argument, the remaining changed row satisfies   
    \[(P_H)_{b_{jp(i)}} = (P_G)_{a_{jp(j)}} - (P_G)_{a_{c(j)}j}+(P_G)_{a_{c(i)i}}.\]
    This shows that $P_G$ and $P_H$ are row equivalent, and the explicit row equivalence has been identified. 

    Now suppose that $P_G$ and $P_H$ are the parametrization matrices for cumulants up to order $k$. These matrices have $kp+|E(G)|$ rows. Again, the first $kp$ rows are equal, since the top vertices of the equitreks remain unchanged after the edge swap. 
    For the rows corresponding to edges, we need to show that the same relations hold as in the second-order case described above. This can be checked column by column in a similar way. 
    Indeed, in the new columns corresponding to cumulants of order three and higher, the only difference is that we consider equitreks between $i$-tuples of vertices for $3\leq i\leq k$, and the argument above still holds.
\end{proof}

\begin{proof}[Proof of \cref{lem: row equiv}]
    $\Leftarrow$: The swapping operation does not change the equitreks and the levels, as shown in the previous lemma.\\
    $\Rightarrow$: We proceed by induction on the number of levels. The base case covers graphs with two levels. Such graphs define equivalent models if they have the same source and the same number of vertices.

    Assume the statement holds for graphs with $k-1$ levels. Let $G$ and $H$ be graphs with $p$ vertices and $k$ levels, satisfying the conditions from the statement. Let $G'$ and $H'$ be the graphs obtained from $G$ and $H$, respectively, by removing all vertices in the last level. Then $G'$ and $H'$ also satisfy the conditions, and by the induction hypothesis, $H'$ can be obtained from $G'$ by a sequence of swapping operations.
    
    We now explain why the same swapping operations can be applied to $H$. 
    Observe that the vertices at level $k-1$ of $G$ and $H$ that have more than one child coincide, since every vertex with at least two children is the top of an equitrek. Moreover, if $u$ and $v$ are vertices at the last level of $G$ whose equitrek has top $w$, then the equitrek between $pa(u)$ and $pa(v)$ has also top $w$.
    Hence, if swapping vertices in $H'$ preserves the tops of equitreks, then the same swapping operations preserve equitrek tops in $H$ as well.

    Applying the same swapping operations to $H$, the resulting graph agrees with $G$ on all levels except possibly the last one, using the induction hypothesis. Finally, additional swaps within the last level can be performed to make the two graph identical. This completes the inductive step. 
\end{proof}

\subsection{Proofs for \texorpdfstring{\cref{sec::VanishingDeterminants}}{Section 6.2}}

\begin{proof}[Proof of \cref{prop::Determinant_Parents}]
    The matrix $S'$ has size $(p-k)\times k$, hence $rk(S')\leq \min(k, p-k)$. Assume that $|pa(U)|<\min(k, p-k)$. We prove the desired rank inequality by decomposing $S'$ as a product of two matrices of appropriate sizes. 
    Let $s_{iv_j}$ be an entry of $S'$. Then $i\in V\setminus U$. For any $u\in pa(U)$ such that $u\notin pa(v_j)$, we have $a_{v_ju} = 0$. Since $s_{iv_j}$ is not a diagonal entry, we may write
    $$s_{iv_j} = \sum_{u\in pa(U)}\sum_{w\in pa(i)} a_{v_ju}a_{iw}s_{wu} = \sum_{u\in pa(U)}a_{v_ju}\sum_{w\in pa(i)}a_{iw}s_{wu} = \sum_{u\in pa(U)}a_{v_ju} K_{ui},$$
    where $K_{ui} \coloneqq \sum_{w\in pa(i)}a_{iw}s_{wu}$. The quantities $K_{ui}$ form a matrix $K$ of size $|pa(U)|\times (p-k)$. Let $A'$ be the $k\times |pa(U)|$ submatrix of $A$ whose rows correspond to $v_1, \ldots, v_k$ and whose columns correspond to vertices in $pa(U)$. Then
    $$S' = (A'K)^T,$$
    and therefore $rk(S')\leq |pa(U)|.$

    We now argue analogously for the Tensor $T$. Fix $i$ and consider the slice $T_i$. 
    Let $T_i'$ be the submatrix of $T_i$ formed by the columns $v_1, \ldots, v_k$ and excluding diagonal entries $t_{jjj}$. If $i\in U$, then $T_i'$ has size $(p-1)\times k$; otherwise, it has size $p\times k$. Assume that $|an_2(U)|$ is smaller than both dimensions of $T_i'$. 
    
    Let $t_{ijv_l}$ be an entry of $T_i'$. Then
    $$t_{ijv_l} = \sum_{u\in pa(U)} \sum_{w\in pa(i)} \sum_{x\in pa(j)} a_{v_ju}a_{iw}a_{jx}t_{uwx} =  \sum_{u\in pa(U)}a_{v_ju} K_{ui}, $$
    where $K_{ui} \coloneqq \sum_{w\in pa(i)} \sum_{x\in pa(j)} a_{iw}a_{jx}t_{uwx}$. Again, the quantities $K_{ui}$ form a matrix $K$ with $|pa(U)|$ rows. Let $A'$ be the $k\times |pa(U)|$ submatrix of $A$, with rows indexed by $v_1, \ldots, v_j$ and columns indexed by $pa(U)$. Then
    $$T_i' = (A'K)^T,$$
    and consequently $rk(T_i')\leq |pa(U)|.$
\end{proof}

\begin{proof}[Proof of \cref{prop::Determinant_Parents_full}]
    By \cref{prop::Determinant_Parents}, we can decompose $S'$ and each slice into a product of two matrices, $A'$ and $K$. Note that the first factor $A'$ is the same in all such decompositions. Using these factorizations, we construct a decomposition of $Q$ by taking $A'$ and stacking the corresponding matrices $K$ in the appropriate order.
\end{proof}

\begin{proof}[Proof of \cref{prop::Determinant_Ancestors}]
    The matrix $S'$ has size $(p-|sib(U)|)\times k$, hence $rk(S')\leq \min(k, p-|sib(U)|)$. Assume that $|an_2(U)|<\min(k, p-|sib(U)|)$. We establish the desired rank inequality by decomposing $S'$ as a product of two matrices of appropriate sizes. Let $s_{iv_j}$ be an entry of $S'$. Then $i\in V\setminus sib(U)$. We compute
    \begin{align*}
    s_{iv_j} &= \sum_{u\in pa(U)}\sum_{w\in pa(i)} a_{v_ju}a_{iw}s_{wu} \\
    &=\sum_{u\in pa(U)}\sum_{w\in pa(i)} a_{v_ju}a_{iw} \left(\sum_{g\in pa(u)}\sum_{x\in pa(w)} a_{ug}a_{wx} s_{gx}\right) \\
    &=\sum_{u\in pa(U)}\sum_{w\in pa(i)} a_{v_ju}a_{iw} \left(\sum_{g\in an_2(U)}\sum_{x\in pa(w)} a_{ug}a_{wx} s_{gx}\right)\\
    &=\sum_{u\in pa(U)}\sum_{w\in pa(i)}\sum_{g\in an_2(U)}\sum_{x\in pa(w)}a_{v_ju}a_{iw}a_{ug}a_{wx} s_{gx}\\
    &=\sum_{g\in an_2(U)} \left(\sum_{u\in pa(U)}a_{v_ju}a_{ug}\right)\left(\sum_{w\in pa(i)}\sum_{x\in pa(w)}a_{iw}a_{wx} s_{gx}\right)\\
    &=\sum_{g\in an_2(U)} B_{v_jg} K_{gi}.
    \end{align*}
    Here we define $$B_{v_jg} \coloneqq \sum_{u\in pa(U)}a_{v_ju}a_{ug}, \qquad K_{gi} \coloneqq \sum_{w\in pa(i)}\sum_{x\in pa(w)}a_{iw}a_{wx} s_{gx}.$$ These quantities form matrices $B$ of size $k\times |an_2(U)|$ and $K$ of size $|an_2(U)\times p-|sib(U)|$. Consequently,
    $$S' = (BK)^T,$$
    and therefore $rk(S')\leq |an_2(U)|.$

    Now consider a slice $T_i$ of the tensor $T$. Let $T_i'$ be the submatrix of $T_i$ formed by the columns $v_1,\ldots, v_k$ and obtained by removing all rows containing entries $t_{ijl}$ with $i, j, l\in sib(U)$. For an entry $t_{ilv_j}$ of $T_i'$, we have
    \begin{align*}
    t_{ilv_j} &= \sum_{u\in pa(U)}\sum_{w\in pa(i)} \sum_{x\in pa(l)}a_{v_ju}a_{iw}a_{lx}t_{wux} \\
    &=\sum_{u\in pa(U)}\sum_{w\in pa(i)}\sum_{x\in pa(l)} a_{v_ju}a_{iw}a_{lx} \left(\sum_{g\in an_2(U)}\sum_{y\in pa(w)}\sum_{z\in\pa(x)} a_{ug}a_{wy} a_{xz}t_{qyz}\right) \\
    & =\sum_{g\in an_2(U)} B_{v_jg} K_{gi},
    \end{align*}
    where $$B_{v_jg} \coloneqq \sum_{u\in pa(U)}a_{v_ju}a_{ug},\qquad K_{gi} \coloneqq \sum_{w\in pa(i)}\sum_{x\in pa(l)}\sum_{y\in pa(w)}\sum_{z\in pa(x)}a_{iw}a_{lx}a_{wy}a_{xz} t_{qyz}.$$
    As before, these quantities form matrices $B$ of size $k\times |an_2(U)|$ and $K$ of compatible dimensions, and we obtain
    $$T_i' = (BK)^T,$$
    which implies $rk(T_i')\leq |an_2(U)|.$

    Finally, observe that both $S'$ and each slices $T_i'$ admit a decomposition as a product of two matrices $B$ and $K$, where the first factor is the same in all cases. Using these decompositions, we construct a decomposition of $Q$ by fixing $B$ and stacking the corresponding matrices $K$ in the appropriate order.
\end{proof}

\end{document}